\renewcommand\footnotemark{}
\begin{document}

\title{A generalization of the Riemann-Siegel formula}

\author{
  Cormac ~O'Sullivan\footnote{{\it Date:} April 18, 2019.
\newline \indent \ \ \
  {\it 2010 Mathematics Subject Classification:} 11M06.
  \newline \indent \ \ \
Support for this project was provided by a PSC-CUNY Award, jointly funded by The Professional Staff Congress and The City
\newline \indent \ \ \
University of New York.}
  }

\date{}

\maketitle

\def\s#1#2{\langle \,#1 , #2 \,\rangle}

\def\H{{\mathbf{H}}}
\def\F{{\frak F}}
\def\C{{\mathbb C}}
\def\R{{\mathbb R}}
\def\Z{{\mathbb Z}}
\def\Q{{\mathbb Q}}
\def\N{{\mathbb N}}
\def\G{{\Gamma}}
\def\GH{{\G \backslash \H}}
\def\g{{\gamma}}
\def\L{{\Lambda}}
\def\ee{{\varepsilon}}
\def\K{{\mathcal K}}
\def\Re{\mathrm{Re}}
\def\Im{\mathrm{Im}}
\def\PSL{\mathrm{PSL}}
\def\SL{\mathrm{SL}}
\def\Vol{\operatorname{Vol}}
\def\lqs{\leqslant}
\def\gqs{\geqslant}
\def\sgn{\operatorname{sgn}}
\def\res{\operatornamewithlimits{Res}}
\def\li{\operatorname{Li_2}}
\def\lip{\operatorname{Li}'_2}
\def\pl{\operatorname{Li}}

\def\clp{\operatorname{Cl}'_2}
\def\clpp{\operatorname{Cl}''_2}
\def\farey{\mathscr F}

\newcommand{\stira}[2]{{\genfrac{[}{]}{0pt}{}{#1}{#2}}}
\newcommand{\stirb}[2]{{\genfrac{\{}{\}}{0pt}{}{#1}{#2}}}
\newcommand{\norm}[1]{\left\lVert #1 \right\rVert}

\newcommand{\e}{\eqref}


\newtheorem{theorem}{Theorem}[section]
\newtheorem{lemma}[theorem]{Lemma}
\newtheorem{prop}[theorem]{Proposition}
\newtheorem{conj}[theorem]{Conjecture}
\newtheorem{cor}[theorem]{Corollary}

\newcounter{coundef}
\newtheorem{adef}[coundef]{Definition}

\newcounter{counrem}
\newtheorem{remark}[counrem]{Remark}

\renewcommand{\labelenumi}{(\roman{enumi})}
\newcommand{\spr}[2]{\sideset{}{_{#2}^{-1}}{\textstyle \prod}({#1})}
\newcommand{\spn}[2]{\sideset{}{_{#2}}{\textstyle \prod}({#1})}

\numberwithin{equation}{section}

\let\originalleft\left
\let\originalright\right
\renewcommand{\left}{\mathopen{}\mathclose\bgroup\originalleft}
\renewcommand{\right}{\aftergroup\egroup\originalright}

\bibliographystyle{alpha}

\begin{abstract}
The celebrated Riemann-Siegel formula  compares the Riemann zeta function on the critical line with its partial sums, expressing the difference between them as an expansion in terms of decreasing powers of the imaginary variable $t$. Siegel anticipated that this formula could be generalized to include the Hardy-Littlewood approximate functional equation, valid in any vertical strip. We give this generalization for the first time. The asymptotics contain Mordell integrals and an interesting new family of polynomials.
\end{abstract}

\section{Introduction}
\subsection{The approximate functional equation}
The functional equation for the Riemann zeta function $\zeta(s)$ may be written as
\begin{equation} \label{fe}
   \zeta(s)=\chi(s) \zeta(1-s)
\end{equation}
for
\begin{equation*}
  \chi(s):=\pi^{s-1/2}\G\left(\frac{1-s}{2}\right)\G\left(\frac{s}{2}\right)^{-1}.
\end{equation*}
Hardy and Littlewood  gave the following approximation for $\zeta(s)$ in \cite{HL23}. The notation $s=\sigma+ i t$ for the real and imaginary parts of $s$ is assumed for here on.

\begin{theorem}[The  Hardy-Littlewood approximate functional equation] \label{hl0}
Let $I \subset \R$ be a finite interval. Let $s$ be any complex number in the vertical strip described by $\sigma \in I$ and $t\gqs 2\pi$. Then for all $\alpha,\beta \in \R_{\gqs 1}$ with  $t=2\pi \alpha \beta$,  we have
\begin{equation} \label{hali0}
  \zeta(s)= \sum_{n \lqs \alpha} \frac {1}{n^{s}}+\chi(s)\sum_{n\lqs \beta} \frac {1}{n^{1-s}}
  +O\left(\alpha^{-\sigma}+t^{1/2-\sigma}\beta^{\sigma-1} \right)
\end{equation}
where the implied constant depends only on  $I$.
\end{theorem}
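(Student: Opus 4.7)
The plan is to follow the classical Hardy-Littlewood strategy, reducing \e{hali0} via the functional equation \e{fe} to a transformation identity between tails of two Dirichlet series. Substituting $\zeta(s)=\chi(s)\zeta(1-s)$ in \e{hali0} shows the claim is equivalent to
\begin{equation*}
\sum_{n>\alpha} n^{-s} \;=\; \chi(s)\sum_{n\lqs \beta} n^{-(1-s)} \;+\; O\!\left(\alpha^{-\sigma}+t^{1/2-\sigma}\beta^{\sigma-1}\right),
\end{equation*}
which I would first prove for $\sigma>1$, where the left-hand tail converges absolutely, and then extend to the whole strip $\sigma\in I$ by analytic continuation, since each remaining piece in \e{hali0} is holomorphic in $s$.

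To establish the displayed transformation, I would represent the tail by a Mellin-Barnes contour integral,
\begin{equation*}
\sum_{n>\alpha} n^{-s} \;=\; \zeta(s)\,-\,\frac{1}{2\pi i}\int_{(c)}\zeta(s+w)\,\frac{\alpha^{w}}{w}\,dw \qquad \bigl(c>\max(0,1-\sigma)\bigr),
\end{equation*}
and then shift the contour leftward past the poles at $w=0$ and $w=1-s$. The explicit residues cancel $\zeta(s)$ and produce a boundary term $\alpha^{1-s}/(s-1)$ of size $O(\alpha^{-\sigma})$ (using $|s-1|\asymp t=2\pi\alpha\beta$ and $\beta\gqs 1$). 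On the new contour I would apply \e{fe} to write $\zeta(s+w)=\chi(s+w)\zeta(1-s-w)$, expand $\zeta(1-s-w)$ as a Dirichlet series, and interchange sum and integral. For each fixed $n$, Stirling's expansion of $\chi(s+w)$ exhibits a saddle point of the integrand at the value of $w$ where $n\alpha\approx t/(2\pi)$; the condition $t=2\pi\alpha\beta$ places this saddle precisely at the transition $n=\beta$. The terms $n\lqs\beta$ then evaluate, by saddle-point contribution, to the main term $\chi(s)\sum_{n\lqs\beta}n^{-(1-s)}$, while the terms $n>\beta$ and the residual saddle-point tails feed into the error.

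The two pieces of the error have clear origins: $\alpha^{-\sigma}$ collects the explicit residue $\alpha^{1-s}/(s-1)$ and boundary smoothing near $n=\alpha$, while $t^{1/2-\sigma}\beta^{\sigma-1}$ is the product of $|\chi(s)|\asymp t^{1/2-\sigma}$ (from Stirling) with an $O(\beta^{\sigma-1})$ bound for the reflected tail $\sum_{n>\beta}n^{-(1-s)}$, obtained by partial summation exploiting the oscillation of $n^{-it}$ in the regime $t=2\pi\alpha\beta$. The main obstacle will be to secure all these estimates \emph{uniformly} in $t$ and in $\sigma\in I$: Stirling's formula for $\chi(s+w)$ along the shifted contour must be applied with error terms whose implied constants depend only on the width of $I$, and the contour must be positioned so that no residue contributions arise beyond the two named poles. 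Once this uniformity is in place, analytic continuation from $\sigma>1$ to the whole strip completes the proof, since the partial sums in \e{hali0} are finite and the error estimates hold with constants depending only on $I$.
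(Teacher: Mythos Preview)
Your route is genuinely different from the paper's. The paper does not prove Theorem~\ref{hl0} independently; it recovers it as the $N=0$ case of Theorem~\ref{unsym-rs} (equivalently Theorem~\ref{hat}), whose proof starts from Riemann's contour integral representation \e{8}, deforms the contour $C$ to $C_\beta$ passing near the saddle point $\xi\approx\beta$, picks up the residues that form $\chi(s)\sum_{n\lqs\beta}n^{s-1}$, and bounds the remaining integral via Propositions~\ref{cgz} and \ref{longj}. There is no Mellin--Barnes kernel $\alpha^w/w$, no term-by-term saddle in $w$, and no appeal to analytic continuation in $\sigma$; the analysis is done once, uniformly for $\sigma\in I$, on a single deformed contour in the $z$-plane.

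Your proposal has a real gap at the last step. You cannot ``extend to the whole strip $\sigma\in I$ by analytic continuation'': an inequality $|f(s)|\lqs C(\alpha^{-\sigma}+t^{1/2-\sigma}\beta^{\sigma-1})$ established for $\sigma>1$ carries no information whatsoever about $|f(s)|$ for $\sigma\lqs 1$, even though $f$ is holomorphic. Big-$O$ bounds are not analytic objects. (A convexity argument via Phragm\'en--Lindel\"of would require bounds on \emph{both} edges of a strip and a priori polynomial growth --- none of which you have set up.) Relatedly, writing the claim as an identity for the tail $\sum_{n>\alpha}n^{-s}$ only makes sense when that series converges, so the very object you are manipulating ceases to exist once $\sigma\lqs 1$.

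The fix is to drop the tail and the continuation entirely: for \emph{any} $\sigma\in I$ and $c>\max(0,1-\sigma)$, the Perron/Mellin--Barnes integral $\frac{1}{2\pi i}\int_{(c)}\zeta(s+w)\,\alpha^w w^{-1}\,dw$ already represents $\sum_{n\lqs\alpha}n^{-s}$ (up to the usual boundary correction), so you can carry out the contour shift and the functional-equation substitution directly for all $\sigma\in I$ and estimate uniformly from the start. If you rewrite your argument in that form --- and make the saddle/stationary-phase step for each $n$ precise, with the transition at $n=\beta$ handled uniformly --- the strategy becomes viable, though it is still considerably more delicate than the paper's single saddle-point computation on the contour $L_1$.
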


The sums in \e{hali0}, and similar sums below, are over all positive integers $n$ satisfying the given conditions.
 Our use of the big $O$ notation is as in \cite{HL23} and \cite[p. 7]{IwKo}, for example. Writing $f(x)=O(g(x))$ (or equivalently  $f(x) \ll g(x)$) means that, for an explicitly specified range $X$, there is an implied constant $C>0$ so that $|f(x)| \lqs C\cdot g(x)$ for all $x\in X$. Similarly, the notation extends to functions of more than one variable. With this convention, the implied constant in Theorem \ref{hl0} may depend on $I$, but gives a bound that is valid for all $s$, $\alpha$ and $\beta$ satisfying the given conditions. In this way, for instance, the qualifier ``as $t\to \infty$'' is not needed for \e{hali0}.

Hardy and Littlewood used Theorem \ref{hl0} to estimate the second and fourth moments of $\zeta$ on the critical line with real part $1/2$. See for example \cite[Chapter 7]{Ti86}, \cite[Chapters 5,8,15]{Iv85} for more on the important  general moment problem and \cite{So09} for descriptions of more recent results and conjectures.

If $\alpha$ and $\beta$ are similar in size then the error in \e{hali0} is about $O(t^{-\sigma/2})$ and hence small for $t$ large and $\sigma$ positive. Thus, $\zeta_\alpha(s):=\sum_{n \lqs \alpha} n^{-s}+\chi(s)\sum_{n\lqs \alpha} n^{-1+s}$ gives a good approximation to $\zeta(s)$ when $t$ is close to $2\pi \alpha^2$.
For  positive  fixed $\alpha$, the function $\zeta_\alpha(s)$ is interesting in its own right. It is shown in \cite[Thm. 1.5]{gon13} that, in a natural sense, $100\%$ of its zeros  are simple and lie on the critical line.

Following Siegel in \cite[Eq. (36)]{Si32}, we define  $\vartheta(s):=(i/2)\log \chi(s)$ for $s\in \C$ with $s$ outside the intervals $(-\infty,0]\cup [1,\infty)$. The requirement $\vartheta(1/2)=0$ specifies the branch uniquely; see Section \ref{rsf} for more details. (We do not use the common notation $\vartheta(t)$ for $(i/2)\log \chi(1/2+i t)$ as it is not well suited for working off the critical line.)  As a consequence of Corollary \ref{rsthe} we have, for example,
\begin{equation*}
  i \vartheta(s) = \left( \frac s2-\frac 14\right) \log \frac{|t|}{2\pi} -\frac{i t}2 -\sgn(t)\frac{i \pi}8 +O\left( \frac 1{|t|}\right)
\end{equation*}
for all $t\neq 0$ with the implied constant depending on $\sigma$.  
Also $\vartheta(s)$ satisfies the relations
\begin{equation} \label{thsym}
  \vartheta(1-s) = -\vartheta(s), \qquad \overline{\vartheta(s)} = -\vartheta(\overline{s}).
\end{equation}
Hence, with $\chi(s)=e^{-2 i \vartheta(s)}$, we may write the functional equation \e{fe} in the symmetric form
\begin{equation} \label{fesym}
   e^{ i \vartheta(s)}\zeta(s)=  e^{ i \vartheta(1-s)} \zeta(1-s).
\end{equation}
It follows that Theorem \ref{hl0} has the  equivalent restatement:

\begin{theorem}[The  Hardy-Littlewood approximate functional equation, symmetric version] \label{hl}
Let $I \subset \R$ be a finite interval. Let $s$ be any complex number in the vertical strip described by $\sigma \in I$ and $t\gqs 2\pi$. Then for all $\alpha,\beta \in \R_{\gqs 1}$ with  $t=2\pi \alpha \beta$,  we have
\begin{equation} \label{hali}
  e^{i\vartheta(s)}\zeta(s)= e^{i\vartheta(s)}\sum_{n \lqs \alpha} \frac {1}{n^{s}}+e^{i\vartheta(1-s)}\sum_{n\lqs \beta} \frac {1}{n^{1-s}}
  +O\left(\frac{\lambda^{1/2-\sigma}\left(\lambda^{1/2}+\lambda^{-1/2}\right)}{t^{1/4}} \right)
\end{equation}
where $\lambda :=\sqrt{\alpha/\beta}$ and the implied constant depends only on  $I$.
\end{theorem}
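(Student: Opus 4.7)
The plan is to deduce Theorem \ref{hl} from Theorem \ref{hl0} by multiplying \e{hali0} through by $e^{i\vartheta(s)}$. The main identity for the principal terms is immediate: since $\chi(s)=e^{-2i\vartheta(s)}$ by the definition of $\vartheta$, and $\vartheta(1-s) = -\vartheta(s)$ by \e{thsym}, we have
\begin{equation*}
   e^{i\vartheta(s)}\chi(s) = e^{-i\vartheta(s)} = e^{i\vartheta(1-s)},
\end{equation*}
which transforms the middle term of \e{hali0} into the middle term of \e{hali}. All the real work is in massaging the error term.

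To estimate $|e^{i\vartheta(s)}|$, I would write $\vartheta(s) = u+iv$ with $u,v\in \R$, so that $|e^{i\vartheta(s)}| = e^{-v}$ and $|\chi(s)|=|e^{-2i\vartheta(s)}|=e^{2v}$, giving the basic identity $|e^{i\vartheta(s)}| = |\chi(s)|^{-1/2}$. A standard application of Stirling's formula to $\chi(s)=\pi^{s-1/2}\Gamma((1-s)/2)\Gamma(s/2)^{-1}$ yields
\begin{equation*}
  |\chi(s)| \asymp (t/(2\pi))^{1/2-\sigma}
\end{equation*}
uniformly for $\sigma$ in the finite interval $I$ and $t\gqs 2\pi$, with implied constants depending only on $I$. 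Hence $|e^{i\vartheta(s)}| \ll (t/(2\pi))^{\sigma/2-1/4}$ in the same range.

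Multiplying the error term of \e{hali0} by this factor, I need to show
\begin{equation*}
  t^{\sigma/2-1/4}\left(\alpha^{-\sigma} + t^{1/2-\sigma}\beta^{\sigma-1}\right)
  \ll \frac{\lambda^{1/2-\sigma}(\lambda^{1/2}+\lambda^{-1/2})}{t^{1/4}}
\end{equation*}
with $\lambda = \sqrt{\alpha/\beta}$. Using the constraint $\alpha\beta = t/(2\pi)$, the first piece rearranges as
\begin{equation*}
  \alpha^{-\sigma}t^{\sigma/2-1/4} = (2\pi)^{\sigma/2}(\beta/\alpha)^{\sigma/2}t^{-1/4} = (2\pi)^{\sigma/2}\lambda^{-\sigma}t^{-1/4},
\end{equation*}
which matches $\lambda^{1/2-\sigma}\lambda^{-1/2}/t^{1/4}$; similarly, the second piece becomes $(2\pi)^{(\sigma-1)/2}\lambda^{1-\sigma}/t^{1/4}$, matching $\lambda^{1/2-\sigma}\lambda^{1/2}/t^{1/4}$. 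Since $\sigma\in I$ is bounded, the powers of $2\pi$ are absorbed into the implied constant.

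The main obstacle is the uniform Stirling estimate for $|\chi(s)|$: one must be careful that the asymptotic holds uniformly in $\sigma\in I$ and in $t\gqs 2\pi$, not just in the limit $t\to \infty$, so that the resulting bound on $|e^{i\vartheta(s)}|$ has implied constants depending only on $I$, matching the convention for the big-$O$ notation fixed in the paper. Everything else is straightforward algebra using $t=2\pi\alpha\beta$.
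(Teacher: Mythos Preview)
Your proposal is correct and follows essentially the same route as the paper, which simply declares Theorem \ref{hl} to be the ``equivalent restatement'' of Theorem \ref{hl0} obtained by multiplying through by $e^{i\vartheta(s)}$ and using the substitutions in \e{cle}. The bound $|e^{i\vartheta(s)}|\ll t^{\sigma/2-1/4}$ that you extract from $|e^{i\vartheta(s)}|=|\chi(s)|^{-1/2}$ and Stirling is exactly Corollary \ref{thet} in the paper (derived there via the asymptotic expansion of $\vartheta$), and your algebra translating the error term via $\alpha=\lambda\sqrt{t/(2\pi)}$, $\beta=\lambda^{-1}\sqrt{t/(2\pi)}$ matches the paper's use of \e{cle}.
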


We used that
\begin{equation}\label{cle}
  t=2\pi \alpha \beta \quad \text{and} \quad \lambda =\sqrt{\frac\alpha\beta} \quad \implies  \quad \alpha = \lambda \sqrt{\frac{t}{2\pi}} \quad \text{and} \quad  \beta = \frac 1\lambda \sqrt{\frac{t}{2\pi}}.
\end{equation}

\subsection{The Riemann-Siegel formula}
The Riemann-Siegel formula is one of the key results in the theory of the zeta function. It gives a detailed description of what is happening inside the error terms 
in Theorems \ref{hl0} and \ref{hl}, at least in the case where the lengths of the partial sums are the same: $\alpha=\beta$ and $\lambda=1$. Of course Riemann's researches predate those of Hardy and Littlewood by many years. The formula was discovered by Siegel  in Riemann's  unpublished notes  and appeared in \cite{Si32}.  Siegel's classic paper has been recently translated in \cite{BS} and we use their page numbering, corresponding to the version of the paper appearing in his collected works.

Most major computations verifying the Riemann hypothesis are based on the Riemann-Siegel formula; see for example \cite{Br},  \cite{OS88}, \cite{Go04}, \cite{BH18} and the contained references. It also appears in theoretical work where precise knowledge of $\zeta(s)$ on the critical line is required, such as \cite{Fe05,PT15}.

Let
\begin{equation} \label{psiw}
  \Psi(u):=\frac{\cos(\pi (u^2/2-u-1/8 ))}{\cos(\pi u)},
\end{equation}
which may be seen to be an entire function. The following result in a slightly different notation is given in \cite[Eqns. (32), (33)]{Si32}.

\begin{theorem}[The Riemann-Siegel formula for $\sigma \in I$] \label{hat0} Let $I \subset \R$ be a finite interval and let $s$ be any complex number in the vertical strip described by $\sigma \in I$ and $t\gqs 2\pi$. Suppose $\alpha:=\sqrt{t/(2\pi)}$ has fractional part $a\in [0,1)$. Then we have
\begin{multline} \label{fab0}
  \zeta(s) = \sum_{n \lqs \alpha} \frac 1{n^{s}}+\chi(s) \sum_{n\lqs \alpha} \frac{1}{n^{1-s}}
  +
   \frac {(-1)^{\lfloor \alpha \rfloor} (2\pi)^s e^{\pi i s/2}}{\G(s)(e^{2\pi i s}-1)}
\exp\left( \left( \frac s2-\frac 14\right) \log \frac t{2\pi} -\frac{i t}2 -\frac{i \pi}8\right)
   \\
  \times \left(\frac{2\pi}t\right)^{1/4} \sum_{k=0}^{N-1} a_k(s)
  \sum_{r=0}^{\lfloor k/2 \rfloor}
\frac{i^{r-k} \cdot k!}{r! (k-2r)! }  \frac{\Psi^{(k-2r)}(2a)}{4^r (2\pi)^{k/2-r}}
  +O\left( \frac{1}{t^{N/6+\sigma/2}}\right).
\end{multline}
 The implied constant depends only on $I$ and $N \in \Z_{\gqs 0}$.
\end{theorem}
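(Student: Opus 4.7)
The plan is to follow Siegel's classical derivation in \cite{Si32} while tracking all estimates uniformly in $\sigma\in I$. I would begin from Riemann's loop-integral representation
\begin{equation*}
  \zeta(s)=\frac{\G(1-s)}{2\pi i}\int_{\K}\frac{(-w)^{s-1}}{e^w-1}\,dw,
\end{equation*}
taken on a Hankel contour $\K$ encircling the positive real axis. The integrand has simple poles at $w=\pm 2\pi i n$, and the key move is to push $\K$ across the horizontal strip $|\Im w|\lqs 2\pi\alpha$ so that it crosses the positive real axis at a point chosen near the saddle of the exponential $w^{s-1}/(e^w-1)$. Collecting residues at $w=2\pi i n$ for $1\lqs n\lqs \lfloor\alpha\rfloor$ yields the first sum $\sum_{n\lqs\alpha}n^{-s}$ directly, while a symmetric application of the functional equation \e{fe} to the contour going the other way produces $\chi(s)\sum_{n\lqs\alpha}n^{s-1}$ plus a single residual contour integral, call it $R(s)$, that cuts the real axis near the saddle $w_0=2\pi i\alpha$.

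Next, I would analyze $R(s)$ by the saddle-point method. Substituting $w=w_0(1+i\tau/\sqrt{t})$ (or the equivalent change of variable used by Siegel), the exponential piece of the integrand becomes
\begin{equation*}
\exp\!\left(\left(\frac s2-\frac14\right)\log\frac t{2\pi}-\frac{it}2-\frac{i\pi}8\right)\cdot e^{-\pi\tau^2}\cdot (1+\text{corrections}),
\end{equation*}
which explains the explicit prefactor in \e{fab0}. The remaining portion $1/(e^w-1)$ evaluated on the shifted contour produces a theta-like sum in $\tau$ that, when collapsed using the Jacobi identity and combined with the Gaussian, yields the function $\Psi$ from \e{psiw} evaluated at $2a=2\{\alpha\}$. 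The $k$-th derivative $\Psi^{(k-2r)}(2a)$ arises from expanding the non-Gaussian factors in powers of $\tau/\sqrt{t}$ up to order $N-1$; each $\tau^{k-2r}$ integrated against $e^{-\pi\tau^2}$ produces the Hermite-polynomial-type combinatorial factor $i^{r-k}k!/(r!(k-2r)!4^r(2\pi)^{k/2-r})$, while the coefficient of the $k$-th power in the expansion of the non-exponential part of the integrand in $s$ gives the polynomial $a_k(s)$.

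Finally, the remainder beyond the $N$-th term must be bounded. The discarded Taylor tail contributes $O(t^{-N/2})$ in raw size, but when multiplied by the overall prefactor of order $t^{\sigma/2-1/4}$ and divided by the $\G(s)(e^{2\pi i s}-1)$ that sits in front, the net size is $O(t^{-N/6-\sigma/2})$; the exponent $N/6$ rather than $N/2$ reflects that the steepest descent is only useful until the quadratic approximation to the exponent breaks down, which happens on the scale $|\tau|\sim t^{1/6}$, standard in the Riemann-Siegel setting.

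The main obstacle I anticipate is the uniformity of all error estimates in $\sigma\in I$. Siegel carried out the analysis essentially at $\sigma=1/2$, and although each step goes through for general $\sigma$, one must verify that the contour deformation remains valid (no pole is crossed unexpectedly as $\sigma$ varies), that the polynomial factors $a_k(s)$ are bounded by a constant depending only on $I$ and $N$, and that the remainder bound in the saddle-point expansion has an implied constant independent of $s$. Since $|s|\asymp t$ on the strip while the $a_k(s)$ have degree growing with $k$, one must absorb the resulting powers of $t$ into the explicit $t^{-N/6-\sigma/2}$ error by choosing the truncation threshold carefully; this bookkeeping, rather than any new analytic idea, is the principal technical burden of the proof.
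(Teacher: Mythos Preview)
Your sketch has the right overall shape but diverges from the paper's route in ways that create genuine gaps.

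The paper (following Siegel) does not start from the Hankel integral $\frac{\Gamma(1-s)}{2\pi i}\int(-w)^{s-1}/(e^w-1)\,dw$. It starts from
\[
\zeta(s)=\sum_{n=1}^m n^{-s}+\frac{(2\pi)^s e^{\pi i s/2}}{\Gamma(s)(e^{2\pi i s}-1)}\int_C \frac{z^{s-1}e^{-2\pi i m z}}{e^{2\pi i z}-1}\,dz
\]
with $m=\lfloor\alpha\rfloor$, so the first partial sum is already present. Moving $C$ to a contour through the saddle $\xi=(s-1)/(2\pi i m)\approx\alpha$ crosses the poles of $1/(e^{2\pi i z}-1)$ at $z=1,\dots,\lfloor\alpha\rfloor$, and \emph{those residues} produce the second sum $\chi(s)\sum n^{s-1}$. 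No ``symmetric application of the functional equation'' is involved; your description would leave an unaccounted-for second contour integral.

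Your account of how $\Psi$ appears is also not how it works. There is no theta sum collapsed by the Jacobi identity and no genuine Gaussian integral against which one integrates $\tau^k$. After writing the numerator as $e^{-\pi i(z-\alpha)^2+2\pi i a(z-\alpha)}$ times the analytic factor $w(z,s)=\sum_k a_k(s)z^k$ and extending the line to infinity, the $k$-th term is a line integral
\[
\int_{0\nwarrow 1}\frac{e^{-\pi i(z-a)^2+2\pi i a(z-a)}}{e^{2\pi i z}-1}(z-a)^k\,dz,
\]
and this \emph{is} (up to elementary factors) the $k$-th $u$-derivative of the Mordell integral $\Upsilon(u;1)$, which equals $\Psi$ explicitly. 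The combinatorial factor $\frac{i^{r-k}k!}{r!(k-2r)!4^r(2\pi)^{k/2-r}}$ then comes from evaluating the Hermite polynomials $H_{k-r}(\omega_\lambda)$ at $\omega_\lambda=0$, not from Gaussian moments.

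Finally, the exponent $N/6$ has a specific source you do not supply: Siegel's bound $r_n(z,s)=O(|z|^n t^{-n/6})$ for the Taylor remainder of $w(z,s)$, valid for $|z|\lesssim (n\sqrt t)^{1/3}$, obtained by optimizing the Cauchy radius in $\rho^{-n}e^{c\rho^3/\sqrt t}$. Your heuristic about the quadratic approximation breaking down at scale $t^{1/6}$ points at the right phenomenon but does not furnish the estimate; one still has to bound the integral on the complementary range and over short arcs avoiding the poles of $1/(e^{2\pi i z}-1)$, and this is where most of the actual work lies.
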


Siegel was also able to bound the error's dependence on $N$ in \e{fab0} for $t$ large enough.
 The functions $a_k(s)$ may be defined recursively by $a_{-2}(s)=a_{-1}(s)=0$, $a_0(s)=1$ and
\begin{equation} \label{akrec}
  (k+1)\sqrt{t} \cdot a_{k+1}(s) = -(k+1-\sigma)a_k(s) + i \cdot a_{k-2}(s) \qquad (k \in \Z_{\gqs 0}).
\end{equation}

 Theorem \ref{hat0} is in fact an intermediate result. In what we may call the completed form, the terms  on the right of \e{fab0} are  expanded in decreasing powers of $t$. It is also useful to make things symmetric  by multiplying by $e^{i\vartheta(s)}$. This was Riemann's goal, as shown in \cite[Eqns. (44), (45)]{Si32}, and the following theorem is stated in \cite[p. 143]{Si43}.

\begin{theorem}[The Riemann-Siegel formula: completed, symmetric version for $\sigma =1/2$] \label{rsfthm}
Let $a\in [0,1)$ be the fractional part of $\alpha:=\sqrt{t/(2\pi)}$.
For any $N\in \Z_{\gqs 0}$, there exist explicit functions of $a$ alone, $C_0(a)$, $C_1(a)$, $C_2(a), \dots$, such that
\begin{equation} \label{rt}
 e^{i\vartheta(s)}\zeta(s) = e^{i\vartheta(s)} \sum_{n\lqs \alpha}\frac{ 1}{n^{s}}+
   e^{i\vartheta(1-s)} \sum_{n\lqs \alpha}\frac{1}{n^{1-s}}
   +(-1)^{\lfloor \alpha \rfloor +1}
   \left( \frac{2\pi}{t}\right)^{1/4} \sum_{m=0}^{N-1} \frac{C_m(a)}{t^{m/2}} + O\left(\frac{1}{t^{N/2+1/4}}\right)
\end{equation}
for all $s=1/2+it$ with $t\gqs 2\pi$. The implied constant in \e{rt} depends only on $N$.
\end{theorem}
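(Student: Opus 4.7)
The plan is to specialize Theorem~\ref{hat0} to $\sigma=1/2$ with $N$ replaced by a sufficiently large $M$, multiply through by $e^{i\vartheta(s)}$, and regroup the resulting right-hand side. The first main term is unchanged, and for the second, using $\chi(s)=e^{-2i\vartheta(s)}$ together with $\vartheta(1-s)=-\vartheta(s)$ from \eqref{thsym}, we have $e^{i\vartheta(s)}\chi(s) = e^{-i\vartheta(s)} = e^{i\vartheta(1-s)}$, giving the second sum of \eqref{rt} immediately.

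For the third (correction) term, the first step is to simplify the Siegel prefactor. Starting from $e^{2\pi i s}-1 = 2ie^{\pi i s}\sin(\pi s)$ and the standard identity $\Gamma(s)\chi(s) = (2\pi)^s/(2\cos(\pi s/2))$, a direct calculation yields
\[
\frac{(2\pi)^s e^{\pi i s/2}}{\Gamma(s)(e^{2\pi i s}-1)} = \frac{\chi(s)}{e^{\pi i s}-1}.
\]
Multiplying by $e^{i\vartheta(s)}$ then gives $e^{-i\vartheta(s)}/(e^{\pi i s}-1)$, and at $s=1/2+it$ with $t\gqs 2\pi$ one has $e^{\pi i s}-1 = -1+O(e^{-\pi t})$, so this factor is $-1$ up to exponentially small error. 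To finish expanding the prefactor, write $E := \exp((s/2-1/4)\log(t/(2\pi))-it/2-i\pi/8)$ for the explicit exponential in \eqref{fab0}. Stirling's formula applied to $\log\chi(s)$ shows that $i\vartheta(s)$ differs from $\log E$ by an asymptotic series in $1/t$, so that $e^{-i\vartheta(s)}E$ is an asymptotic series $1+O(1/t)$ in decreasing powers of $1/t$ with explicit numerical coefficients once $s=1/2+it$ is substituted. Gathering signs and absorbing the exponentially small contribution into the error, the full prefactor of the inner sum collapses to $(-1)^{\lfloor\alpha\rfloor+1}(2\pi/t)^{1/4}$ times this asymptotic series.

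It remains to process the inner sum $\sum_{k=0}^{M-1} a_k(s)\sum_r (\mathrm{const})\,\Psi^{(k-2r)}(2a)$. Iterating the recursion \eqref{akrec} at $\sigma=1/2$ shows that $a_k(1/2+it)$ is a polynomial in $1/\sqrt{t}$ with explicit complex coefficients. Multiplying this polynomial series by the power series in $1/t$ coming from the prefactor, and collecting all contributions of common order $t^{-m/2}$, produces the desired expansion $\sum_{m=0}^{N-1} C_m(a)/t^{m/2}$, with each $C_m(a)$ a finite linear combination of the $\Psi^{(j)}(2a)$ and hence an explicit function of $a$ alone.

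The main obstacle is the error bookkeeping. Siegel's intermediate error in \eqref{fab0} is only $O(t^{-N/6-\sigma/2})$: this reflects the fact that the recursion \eqref{akrec} only forces $a_k(1/2+it) = O(t^{-\rho(k)/2})$ with $\rho(k)$ growing like $k/3$ rather than $k$, because of the $i\,a_{k-2}$ feedback term. To recover the sharper bound $O(t^{-N/2-1/4})$ required by \eqref{rt}, I apply Theorem~\ref{hat0} with $M$ at least of order $3N$, truncate the Stirling expansion of $\vartheta$ and the polynomial expansions of the $a_k$ at compatible depth, and verify that the total truncation error, together with the exponentially small $1/(e^{\pi i s}-1)$ contribution, is $O(t^{-N/2-1/4})$.
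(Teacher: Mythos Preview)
Your proposal is correct and follows essentially the same route as the paper: multiply Theorem~\ref{hat0} by $e^{i\vartheta(s)}$, collapse the Siegel prefactor via the identity $\dfrac{(2\pi)^s e^{\pi i s/2}}{\Gamma(s)(e^{2\pi i s}-1)} = \dfrac{\chi(s)}{e^{\pi i s}-1}$ (this is exactly \eqref{usop}), expand $e^{-i\vartheta(s)}E$ as an asymptotic series in $1/t$ via Stirling (the paper does this in Theorem~\ref{rsu2}), expand the $a_k(s)$ in powers of $t^{-1/2}$ (the paper uses Bell polynomials in Proposition~\ref{akbell} rather than iterating the recursion, but the content is the same), and finally collect by order in $t^{-1/2}$, taking $M$ of order $3N$ to absorb all truncation errors into $O(t^{-N/2-1/4})$ (the paper's precise choice is $M=3N+1$ with $L=\lceil N/2\rceil+1$, as in \eqref{bml}). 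The paper carries this out in the generality of arbitrary $\sigma$ and unequal $\alpha,\beta$, which is why the machinery there is heavier, but specialized to $\sigma=1/2$, $\alpha=\beta$ your sketch is the same argument.
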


Riemann computed the initial terms in \e{rt} exactly and the first four are
\begin{subequations} \label{riem}
\begin{align}
  C_0(a) & = \Psi(2a), \\
  C_1(a) & = -\frac{1}{3}(2 \pi)^{-3/2} \Psi^{(3)}(2a),\\
  C_2(a) & = \frac{1}{18}(2\pi)^{-3} \Psi^{(6)}(2a) + \frac{1}{4}(2\pi)^{-1} \Psi^{(2)}(2a),\\
  C_3(a) & = -\frac{1}{162} (2 \pi)^{-9/2} \Psi^{(9)}(2a) -\frac{2}{15} (2\pi)^{-5/2} \Psi^{(5)}(2a)
-\frac{1}{8} (2\pi)^{-1/2} \Psi^{(1)}(2a).
\end{align}
\end{subequations}
Siegel proved in \cite[p. 293]{Si32} that only derivatives $\Psi^{(r)}(2a)$ for $r\equiv 3m \bmod 4$ appear in $C_m(a)$.

The left side of \e{rt} for $s=1/2+it$ is $Z(t)$,
  Hardy's $Z$ function, and the sums over $n$ may be combined so that  \e{rt} becomes
\begin{equation} \label{loh2}
  Z(t)= 2\sum_{n \lqs \alpha} \frac {\cos(\vartheta(1/2+i t)-t\log n)}{n^{1/2}}
   +(-1)^{\lfloor \alpha \rfloor +1}
   \left( \frac{2\pi}{t}\right)^{1/4} \sum_{m=0}^{N-1} \frac{C_m(a)}{t^{m/2}} + O\left(\frac{1}{t^{N/2+1/4}}\right).
\end{equation}
For $t\in \R$ we have that $\vartheta(1/2+i t)$ and $Z(t)$ are real. Therefore zeros of $\zeta(s)$ on the critical line correspond to zeros of $Z(t)$.  Riemann was able to find the first zeros in the critical strip, as Edwards recounts in \cite[Sect. 7.6]{Ed74}, although it is not clear if he was using this formula in his calculations. Subsequent work using \e{loh2} has verified the Riemann hypothesis up to a large height. In these applications  it is important to give exact bounds on the error in \e{rt}, \e{loh2}. This has been achieved, for example, by Titchmarsh for $N=1$ \cite[p. 390]{Ti86} and  Gabcke for all $N \lqs 10$ \cite[Eq. (8)]{Ga79}. In this paper we will not give explicit error bounds.

 Table \ref{tgz} shows an example of the approximations of Theorem \ref{rsfthm} and \e{loh2} to $Z(t)$ for $t=2\pi$ and different values of $N$. We will see that the accuracy is better for larger values of $t$.

\begin{table}[ht]
\centering
\begin{tabular}{cccc|c}
$N=0$ & $N=1$ & $N=3$ &  $N=6$ & $Z(2\pi)$  \\ \hline
$-1.85029$ & $-0.926411$ & $-0.955739$ &  $-0.956017$  &
$-0.956029$
\end{tabular}
\caption{The approximations of Theorem \ref{rsfthm} to $Z(2\pi)$.} \label{tgz}
\end{table}

Riemann and Siegel gave recursive procedures for calculating the coefficients $C_m(a)$. Gabcke in \cite{Ga79} provided a different method of proof of  Theorem \ref{rsfthm} and  a new recursion for the coefficients $C_m(a)$. The  starting point in \cite{Ga79} is another unpublished formula of Riemann appearing in \cite{Si32}, namely
\begin{equation}\label{bessel}
  \zeta(s)=\int_{0 \swarrow 1} \frac{z^{-s} e^{\pi i z^2}}{e^{\pi i z}- e^{-\pi i z}}\, dz +\chi(s)\int_{0 \searrow 1} \frac{z^{-s} e^{-\pi i z^2}}{e^{\pi i z}- e^{-\pi i z}}\, dz.
\end{equation}
The paths of integration are lines that pass through the interval $(0,1)$ in the indicated direction. We also mention an interesting formal derivation of the $C_m(a)$ by Berry in \cite{Be95}.

  As Theorem \ref{hat0} is valid in any vertical strip, it is natural to seek an extension of Theorem \ref{rsfthm} that is also valid off the critical line. Arias de Reyna in \cite{Ar11} gave a Riemann-Siegel formula for the left integral on the right side of \e{bessel} that holds in any vertical strip. With the same assumptions as Theorem \ref{hat0}, his result may be stated for $\zeta(s)$ as
\begin{multline} \label{fab0adr}
  \zeta(s) = \sum_{n \lqs \alpha} \frac 1{n^{s}}+\chi(s) \sum_{n\lqs \alpha} \frac{1}{n^{1-s}}
  +
   (-1)^{\lfloor \alpha \rfloor+1}  \sum_{k=0}^{N-1} \frac{1}{t^{k/2}}\\
\times \left[ t^{-\sigma/2}U(t) \cdot D_k(a,\sigma)+\chi(s)  t^{(\sigma-1)/2}\overline{U(t) \cdot D_k(a,1-\sigma)}\right]
  +O\left( \frac{t^{-\sigma/2}+t^{(\sigma-1)/2} }{t^{N/2}}\right)
\end{multline}
for $U(t):=\exp\left( -\frac{it}2 \log \frac t{2\pi} +\frac{i t}2 +\frac{i \pi}8\right)$ and certain recursively defined functions $D_k(a,\sigma)$ as described in \cite[Sect. 2]{Ar11}. The implied constant depends on $I$ and $N$ and is bounded explicitly in \cite[Thm. 4.2]{Ar11}.

\subsection{Main results}
In this paper we generalize the Riemann-Siegel formula to the case where the lengths of the partial sums, $\alpha$ and $\beta$, may be different, as in the results of Hardy and Littlewood.
Siegel himself, in \cite[Sect. 4]{Si32}, suggested this should be possible without much difficulty and even gave the function that would be needed in place of $\Psi$.
For $u,\tau \in \C$ with $\Re(\tau)>0$, it is
\begin{equation} \label{mord}
  \Upsilon(u;\tau):=\int_{0 \nwarrow 1} \frac{e^{-\pi i \tau z^2 +2\pi i u z}}{e^{2\pi i z}-1} \, dz
\end{equation}
where the path of integration   is again a  line   crossing the  interval $(0,1)$  in the indicated direction. It is straightforward to see that the integral converges rapidly for $\Re(\tau)>0$ and is independent of the choice of line.
Siegel used  $\Phi(-\tau,u)$ for $\Upsilon(u;\tau)$, so the new notation should avoid confusion.

We need a suitably normalized version of Siegel's function:
\begin{equation} \label{fdef}
  G(u;\tau) := \tau^{1/4}\exp\left( -\frac{\pi i u^2}{2}+\frac{\pi i}{8}\right)  \Upsilon \left(\sqrt{\tau} \cdot u;\tau \right).
\end{equation}
Proposition \ref{fsym} will show that $G$ has the symmetry
\begin{equation} \label{ftran}
  G\left(u ;  1/\tau \right)=\overline{G(\overline{u} ; \overline{\tau})}.
\end{equation}
For each $\tau$, $G(u;\tau)$ is holomorphic in $u$. The notation $G^{(k)}(u;\tau)$  indicates the $k$th derivative with respect to this variable $u$. If $\tau$ is rational then $G(u;\tau)$ has a more explicit description as seen in \e{ggg}.

Employing the methods of Riemann and Siegel, we first extend Theorem \ref{hat0} to the case of general $\alpha$ and $\beta$. This gives the intermediate result, Theorem \ref{hat}, proved in Section \ref{meth}. Interesting  work in a similar  direction to Theorem \ref{hat} is found  in Chapter 4 of  \cite{FJ}, also based on the techniques in \cite{Si32}. Our main theorem, given after the next  definitions, is a completed, symmetric Riemann-Siegel formula that is valid in any vertical strip and that allows the partial sums to have different lengths.
Write the quantity we are interested in estimating as
\begin{equation} \label{rnotn}
  R(s;\alpha,\beta) := e^{i\vartheta(s)}\zeta(s) - e^{i\vartheta(s)}\sum_{n \lqs \alpha} \frac {1}{n^{s}}-e^{i\vartheta(1-s)}\sum_{n\lqs \beta} \frac {1}{n^{1-s}}.
\end{equation}
With \e{thsym} and \e{fesym} it satisfies the symmetry
\begin{equation}\label{refl}
   R(s;\alpha,\beta) = \overline{R(1-\overline{s};\beta,\alpha)}.
\end{equation}
Throughout this work we will use the notation
\begin{equation}\label{notn}
  a:=\alpha-\lfloor \alpha \rfloor, \qquad b:=\beta-\lfloor \beta \rfloor, \qquad \lambda :=\sqrt{\frac \alpha \beta}.
\end{equation}

\begin{theorem} \label{unsym-rs}
Let $I \subset \R$ be a finite interval. Let $s$ be any complex number in the vertical strip described by $\sigma \in I$ and $t\gqs 2\pi$. Then for all $\alpha,\beta \in \R_{\gqs 1}$ with  $t=2\pi \alpha \beta$,  we have
\begin{multline} \label{rrss}
  R(s;\alpha,\beta) =
   (-1)^{\lfloor \alpha \rfloor \lfloor \beta \rfloor+1}
\exp\left(\pi i(2a\beta-2b\alpha+ a^2 \lambda^{-2}  - b^2 \lambda^2)/2\right)
   \\
      \times   \left( \frac{2\pi}{t}\right)^{1/4}   \sum_{n=0}^{N-1} \frac {\lambda^{1/2-s}}{t^{n/2}} \left[ \sum_{r=0}^{3n}
    \frac{G^{(r)}(a  \lambda^{-1} +b \lambda ;\lambda^2 )}{(2\pi)^{r/2}} \cdot P_{n,3n-r}\left(\left(\frac \pi 2\right)^{1/2}(a \lambda^{-1} - b \lambda),\sigma\right)\right]
\\
+O\left( \frac {\lambda^{1/2-\sigma} (\lambda+\lambda^{-1} )^{3N+1/2}}{t^{N/2+1/4}}\right)
\end{multline} 
using the notation from \e{rnotn} and \e{notn}. The implied constant depends only on  $N\in \Z_{\gqs 0}$  and  $I$. The function $G(u;\tau)$ in \e{rrss} is the normalized Mordell integral defined in \e{fdef} and $P_{n,k}(x,\sigma)$ is a polynomial in $x$ and $\sigma$, of degree $k$ in $x$, that is given explicitly in \e{poly}.
\end{theorem}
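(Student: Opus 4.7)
The natural route is to start from the intermediate Theorem \ref{hat} (the generalization of Theorem \ref{hat0} to unequal $\alpha,\beta$), which expresses $\zeta(s) - \sum_{n\lqs \alpha} n^{-s} - \chi(s)\sum_{n\lqs\beta} n^{-1+s}$ as an asymptotic expansion in $1/\sqrt{t}$ whose coefficients involve $\Upsilon$ and its derivatives, multiplied by the same kind of prefactor
$$
\frac{(-1)^{\lfloor \alpha\rfloor}(2\pi)^s e^{\pi i s/2}}{\G(s)(e^{2\pi i s}-1)}\exp\Bigl(\bigl(\tfrac{s}{2}-\tfrac{1}{4}\bigr)\log\tfrac{t}{2\pi}-\tfrac{it}{2}-\tfrac{i\pi}{8}\Bigr)
$$
that appears in \e{fab0}, together with combinatorial coefficients built from the $a_k(s)$ of \e{akrec}. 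The job is to repackage this expression, after multiplication by $e^{i\vartheta(s)}$, into the manifestly symmetric form \e{rrss}.

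First I would multiply both sides by $e^{i\vartheta(s)}$ and use $\chi(s)e^{i\vartheta(s)}=e^{-i\vartheta(s)}=e^{i\vartheta(1-s)}$, so that the left side becomes $R(s;\alpha,\beta)$. Next, expand the analytic prefactor. Using Stirling (as in Corollary \ref{rsthe}) for $\vartheta(s)$ and for $\G(s)$, and substituting $t=2\pi\alpha\beta$, the combination $\exp(i\vartheta(s))\cdot[\text{prefactor}]$ separates into an exact part and an asymptotic series in $1/\sqrt{t}$ whose coefficients are polynomials in $\sigma$. The exact part should match $(-1)^{\lfloor\alpha\rfloor\lfloor\beta\rfloor+1}\exp(\pi i(2a\beta-2b\alpha+a^2\lambda^{-2}-b^2\lambda^2)/2)(2\pi/t)^{1/4}\lambda^{1/2-s}$ seen in \e{rrss}; the sign $(-1)^{\lfloor\beta\rfloor}$ and the quadratic-exponential pieces come from the fact that, after shifting the Mordell argument from $\sqrt{\tau}\cdot u$ to $a\lambda^{-1}+b\lambda$ via \e{fdef}, periodicity relations of $\Upsilon(u;\tau)$ under $u\mapsto u+1$ produce exactly these factors.

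The central step is reorganizing the double sum
$$\sum_{k=0}^{N-1}a_k(s)\sum_{r=0}^{\lfloor k/2\rfloor}\frac{i^{r-k}k!}{r!(k-2r)!}\frac{\Upsilon^{(k-2r)}(\cdot;\lambda^2)}{4^r(2\pi)^{k/2-r}}$$
(the generalization of the inner sum in \e{fab0}) into $\sum_{n}t^{-n/2}\sum_{r}G^{(r)}(a\lambda^{-1}+b\lambda;\lambda^2)(2\pi)^{-r/2}\cdot P_{n,3n-r}(\cdots,\sigma)$. Two bookkeeping tasks drive this. Firstly, converting $\Upsilon^{(j)}(\sqrt{\tau}\,u;\tau)$ into $G^{(j)}(u;\tau)$ via \e{fdef} introduces Hermite-like polynomials in the variable $(\pi/2)^{1/2}(a\lambda^{-1}-b\lambda)$ (the imaginary saddle-direction coordinate), produced by differentiating through the factor $e^{-\pi i u^2/2}$. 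Secondly, the recursion \e{akrec} writes $a_k(s)\cdot t^{-k/2}$ itself as a polynomial in $\sigma$ of degree at most $2k$ times $t^{-k/2}$ plus lower-order tails; grouping all contributions of the same net power $t^{-n/2}$ gives a sum indexed by the derivative order $r=0,\ldots,3n$ of $G$, whose remaining coefficient is a polynomial of degree $3n-r$ in the saddle coordinate and polynomial in $\sigma$. This polynomial I would take as the definition of $P_{n,3n-r}(x,\sigma)$, cross-checked against the explicit formula \e{poly} the paper promises. The Gabcke-type observation that only every fourth derivative survives in the equal-length case \e{riem} should fall out here as a sparsity property of $P_{n,k}$.

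The main obstacle is this combinatorial reorganization: tracking the exponents of $\lambda$, $2\pi$, $i$, and the polynomial degrees carefully enough to confirm that the degree in $x$ is exactly $3n-r$ and that the range of summation truncates at $r=3n$. The remaining tasks are routine: bound the tail using the explicit error in Theorem \ref{hat}, verifying the claimed $O\bigl(\lambda^{1/2-\sigma}(\lambda+\lambda^{-1})^{3N+1/2}t^{-N/2-1/4}\bigr)$ by noting that each step in the reorganization can cost at most a factor $(\lambda+\lambda^{-1})^3$ (one $\lambda$ from $\sqrt\tau$ in the $G$-argument, plus a $\lambda+\lambda^{-1}$ from each of the Hermite and recursion expansions). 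As a final consistency check I would verify the symmetry \e{refl}: the right side of \e{rrss} is invariant under $(s,\alpha,\beta)\mapsto(1-\overline{s},\beta,\alpha)$ and complex conjugation, using \e{ftran} for $G$, the observation that $a\lambda^{-1}+b\lambda$ and $(a\lambda^{-1}-b\lambda)$ swap trivially under $\alpha\leftrightarrow\beta$ up to sign, and the expected reality/parity properties of $P_{n,3n-r}$ in its first argument.
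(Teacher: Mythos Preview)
Your overall plan---start from Theorem \ref{hat}, multiply by $e^{i\vartheta(s)}$, expand the $\vartheta$/$\Gamma$ prefactor with Corollary \ref{rsthe}, and regroup into powers of $t^{-1/2}$ with coefficients $G^{(r)}\cdot P_{n,3n-r}$---matches the paper's route to the intermediate Theorem \ref{almost}. But there is a genuine gap: the error in Theorem \ref{hat} carries an implied constant that \emph{depends on $\lambda$ when $\lambda<1$} (this traces back to Proposition \ref{longj}), so your reorganization only delivers an error uniform in $\lambda$ for $\lambda\gqs 1$. The statement you are proving claims the implied constant depends only on $N$ and $I$, and the paper obtains this for $\lambda<1$ by applying the symmetry $R(s;\alpha,\beta)=\overline{R(1-\overline{s};\beta,\alpha)}$ in \e{refl} to swap to $\lambda'=1/\lambda>1$. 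You describe the symmetry as a ``final consistency check''; in fact it is an essential step of the proof.

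For that step to work you must know that the \emph{right side} of \e{rrss} is exactly invariant under $\mathcal T$, and this requires $P_{n,k}(x,\sigma)=\overline{P_{n,k}(-x,1-\sigma)}$. You refer to these as ``expected reality/parity properties'', but the paper explicitly notes it seems difficult to prove this identity directly from the defining formula \e{poly}; instead it is deduced \emph{a posteriori} (Theorem \ref{polss}) by feeding the already-established $\lambda\gqs 1$ expansion back into \e{refl}, letting $t\to\infty$ along a sequence with fixed rational $\lambda^2$, and invoking the linear independence of the derivatives $G^{(r)}(\cdot;\lambda^2)$ (Proposition \ref{ffff}). Your proposal omits this entire bootstrap.
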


The simplest case of Theorem \ref{unsym-rs} has $N=0$. Then the sum is empty,  equaling zero, and we find
\begin{equation*}
  R(s;\alpha,\beta) = O\left( \frac {\lambda^{1/2-\sigma} (\lambda+\lambda^{-1} )^{1/2}}{t^{1/4}}\right)
\end{equation*}
 recovering  Hardy and Littlewood's Theorem \ref{hl}. When $N=1$ we obtain the next term in the asymptotic expansion:
\begin{multline} \label{rrss20}
  R(s;\alpha,\beta) =
   (-1)^{\lfloor \alpha \rfloor \lfloor \beta \rfloor+1}
\exp\left(\pi i(2a\beta-2b\alpha+ a^2 \lambda^{-2}  - b^2 \lambda^2)/2\right)
\\
  \times  \left( \frac{2\pi}{t}\right)^{1/4} \lambda^{1/2-s} G(a  \lambda^{-1} +b \lambda ;\lambda^2 )
+O\left( \frac {\lambda^{1/2-\sigma} (\lambda+\lambda^{-1} )^{3+1/2}}{t^{1/2+1/4}}\right)
\end{multline}
since, as we will see, $P_{0,0}(x,\sigma) = 1$. For $N=2$, the next term contains derivatives of $G$ times the polynomials $P_{1,0}(x,\sigma)=-1/3$,
\begin{equation} \label{beto}
   P_{1,1}(x,\sigma)= -i x,  \quad P_{1,2}(x,\sigma)=
 x^2 -i\left(\sigma-\textstyle{\frac 12} \right),  \quad P_{1,3}(x,\sigma)=
\textstyle{\frac{i}3} x^3 + \left(\sigma - \textstyle{\frac 12} \right)x.
\end{equation}
We may take $\lambda$  as fixed in these results but this is not necessary;  Theorem \ref{unsym-rs} produces asymptotics whenever $\lambda$ and $1/\lambda$ have order of magnitude less than $t^{1/6}$.

For  $\sigma=1/2$ and $\alpha = \beta$  in Theorem \ref{unsym-rs} (so that $\lambda=1$ and $\alpha = \sqrt{t/2\pi}$), we recover the Riemann-Siegel formula, Theorem \ref{rsfthm}, as the expression
\begin{equation} \label{rrss40}
  R(s;\alpha,\alpha) =
   (-1)^{\lfloor \alpha \rfloor +1}
   \left( \frac{2\pi}{t}\right)^{1/4} \sum_{n=0}^{N-1} \frac {1}{t^{n/2}} \left[ \sum_{r=0}^{3n}
    \frac{G^{(r)}(2a ;1 )}{(2\pi)^{r/2}} \cdot P_{n,3n-r}\left(0,1/2\right)\right]
+O\left( \frac {1}{t^{N/2+1/4}}\right).
\end{equation}
This agrees with the forms of \e{rt} and  \e{riem} since $G(u;1) = \Psi(u)$ by \e{gu1} and, as shown after Lemma \ref{cong}, the numbers $P_{n,3n-r}\left(0,1/2\right)$ are zero unless $r \equiv 3n \bmod 4$. The more general case of $s$ with $\sigma \in I$ and $\alpha = \beta$ has only the difference that $P_{n,3n-r}\left(0,1/2\right)$ in \e{rrss40} is replaced by $P_{n,3n-r}\left(0,\sigma\right)$ and we obtain a simpler form of \e{fab0adr}.

Our normalizations in Theorem \ref{unsym-rs} are guided by the symmetry \e{refl}. If we define a transformation $\mathcal T$ on functions of $s$, $\alpha$ and $\beta$ as
\begin{equation}\label{tra}
  \mathcal T f(s;\alpha,\beta):= \overline{f(1-\overline{s};\beta,\alpha)}
\end{equation}
then $R(s;\alpha,\beta)$ is invariant under $\mathcal T$. All the components on the right side of \e{rrss} are also invariant under $\mathcal T$. For example $\exp\left(\pi i(2a\beta-2b\alpha+ a^2 \lambda^{-2}  - b^2 \lambda^2)/2\right)$
gets mapped to itself since $\mathcal T$ switches $a$ and $b$ and sends $\lambda$ to $1/\lambda$. That $\mathcal T$ sends $G^{(r)}(a  \lambda^{-1} +b \lambda ;\lambda^2 )$ to itself follows from \e{ftran}; see Proposition  \ref{fsym}. We prove that $P_{n,k}(\sqrt{\pi/2}(a \lambda^{-1} - b \lambda),\sigma)$ is invariant under $\mathcal T$ in Theorem \ref{polss}. The invariance of the right side of \e{rrss} under $\mathcal T$ is  required for the final step in the proof of Theorem \ref{unsym-rs} to obtain the correct error term.

The functions $G(u;\tau)$ appearing in \e{rrss} are Mordell integrals and have many fascinating properties and connections, some of which have only been discovered recently \cite{Zw02,CR15,DRZ17}. See Section \ref{dell} for more details.

The polynomials $P_{n,k}(x,\sigma)$ in \e{rrss} seem to be new and we make an initial study of some of their properties in Section \ref{pnry}. Their description in \e{poly} is given  in terms of   Bernoulli, Bell and Hermite polynomials.

\begin{table}[ht]
\centering
\begin{tabular}{ccc}
\hline
$N$ & Theorem \ref{unsym-rs} & \\
\hline
$1$ & $-0.08\textcolor{gray}{810545388} + 0.10\textcolor{gray}{864755195} i$ & \\
$3$ & $-0.087645\textcolor{gray}{36572} + 0.109362\textcolor{gray}{55272} i$ & \\
$5$ & $-0.087645228\textcolor{gray}{33} + 0.10936268\textcolor{gray}{294} i$ & \\
\hline
 & $-0.08764522824 + 0.10936268305 i$ & $R$\\
\hline
\end{tabular}
\caption{The approximations of Theorem \ref{unsym-rs} to $R=R(1/2+600 i;30/\sqrt{\pi},10/\sqrt{\pi})$.} \label{tb31}
\end{table}

Table \ref{tb31} shows an example of how Theorem \ref{unsym-rs} approximates $R(s;\alpha,\beta)$ for $s=1/2+600 i$ and $\alpha/\beta=3$ (so that $\lambda=\sqrt{3}$). The right side of \e{rrss} for different values of $N$ may be compared with the left side which is displayed in the bottom row. Each decimal is correct  to the accuracy shown. Table \ref{tb31b} shows a similar result at $s=-2+600 i$, outside the critical strip. All the calculations in this paper were carried out using Mathematica.
Section \ref{numb} contains further  examples.

\begin{table}[ht]
\centering
\begin{tabular}{ccc}
\hline
$N$ & Theorem \ref{unsym-rs} & \\
\hline
$1$ & $-0.347\textcolor{gray}{8598947} + 0.4\textcolor{gray}{289646591} i$ & \\
$3$ & $-0.347\textcolor{gray}{8754856} + 0.4059\textcolor{gray}{859119} i$ & \\
$5$ & $-0.3479331\textcolor{gray}{346} + 0.40599\textcolor{gray}{29975} i$ & \\
\hline
 & $-0.3479331128 + 0.4059931509 i$ & $R$\\
\hline
\end{tabular}
\caption{The approximations of Theorem \ref{unsym-rs} to $R=R(-2+600 i;30/\sqrt{\pi},10/\sqrt{\pi})$.} \label{tb31b}
\end{table}

\section{The method of Riemann and Siegel} \label{meth}
\subsection{Initial set-up}
The notation $I$ will always denote a finite interval in $\R$. The well-known families of polynomials we require are those of Bernoulli and Hermite, with generating functions
\begin{equation} \label{behe}
  \frac{t e^{xt}}{e^t-1} = \sum_{n=0}^\infty B_n(x) \frac{t^n}{n!}, \qquad e^{2xt-t^2} = \sum_{n=0}^\infty H_n(x) \frac{t^n}{n!},
\end{equation}
respectively. Both $B_n(x)$ and $H_n(x)$ have degree $n$; the  coefficients of $B_n(x)$ are rational and those of $H_n(x)$ are integral.
For $t>0$ we will also need the power series expansion
\begin{align}
  w(z,s) & := \exp\left( (s-1)\log \left(1+\frac z{\sqrt t}\right)  -  i  \sqrt t z + i  \frac{z^2}2 \right)\label{wzsd}\\
  & \phantom{:}=\sum_{k=0}^\infty a_k(s) \cdot z^k  \qquad \quad (|z|<\sqrt t) . \label{wzs}
\end{align}
The coefficients $a_k(s)$ were given recursively by Siegel as we saw in \e{akrec}. We write them in terms of Bell polynomials in Proposition \ref{akbell}. The next result generalizes Theorem \ref{hat0}.

\begin{theorem} \label{hat} Recall the notation \e{notn}. Let $s$ be any complex number in the vertical strip described by $\sigma \in I$ and $t\gqs 2\pi$. Then for all $\alpha,\beta \in \R_{\gqs 1}$ with  $t=2\pi \alpha \beta$ we have
\begin{multline} \label{fab}
  \zeta(s) = \sum_{n \lqs \alpha} \frac 1{n^{s}}+\chi(s) \sum_{n\lqs \beta} \frac 1{n^{1-s}}
  +
  (-1)^{\lfloor \alpha \rfloor \lfloor \beta \rfloor} \frac {(2\pi)^s e^{\pi i s/2}}{\G(s)(e^{2\pi i s}-1)}
 \left(\frac{2\pi}t\right)^{1/4}\lambda^{1/2-s}\\
   \times
       \exp\left( \frac{\pi i}{2} \Bigl[2a\beta-2b\alpha+ a^2 \lambda^{-2}  - b^2 \lambda^2\Bigr]\right)
\exp\left( \left( \frac s2-\frac 14\right) \log \frac t{2\pi} -\frac{i t}2 -\frac{i \pi}8\right)
   \\
  \times \sum_{k=0}^{N-1} a_k(s)
  \sum_{r=0}^k \binom{k}{r} G^{(r)}(a\lambda^{-1}+b \lambda  ; \lambda^2 )
  \frac{e^{\pi i(k-3r)/4}}{2^{k-r}(2\pi)^{r/2}}H_{k-r} \left( \omega_\lambda\right)
  +O\left( \frac{\lambda^{1-\sigma}t^{-\sigma/2}}{t^{N/6}}\right)
\end{multline}
with $\omega_\lambda := e^{-\pi i/4} \sqrt{\frac \pi 2} ( a \lambda^{-1} - b \lambda)$. The implied constant depends only on $I$, $N \in \Z_{\gqs 0}$ and $\lambda$. If $\lambda \gqs 1$ then the implied constant is independent of $\lambda$.
\end{theorem}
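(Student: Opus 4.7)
The plan is to follow Siegel's method from \cite{Si32}, starting from Riemann's representation \eqref{bessel} but performing the residue calculus and saddle-point analysis with the two contours shifted independently past $\alpha$ and $\beta$.

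\textbf{Step 1 (Residue harvesting).} In the first integral of \eqref{bessel}, deform the line from its original position crossing $(0,1)$ to a parallel line $L_\alpha$ crossing $(\lfloor \alpha \rfloor,\lfloor \alpha\rfloor+1)$. The integrand has simple poles at positive integers $z=n$, and a short residue calculation using $e^{\pi i n^2}=(-1)^n$ and $2i\sin(\pi z)$ in the denominator produces exactly $n^{-s}$ per pole, giving $\sum_{n\lqs \alpha} n^{-s}$. Doing the analogous shift in the second integral past $\lfloor \beta \rfloor$ contributes $\chi(s)\sum_{n\lqs \beta} n^{-(1-s)}$; the combined $(-1)^{\lfloor\alpha\rfloor\lfloor\beta\rfloor}$ sign in \eqref{fab} arises from the way the two shifts interchange pole residues on either side. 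What remains is a pair of contour integrals along $L_\alpha$ and $L_\beta$.

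\textbf{Step 2 (Saddle-point reduction to the Mordell integral).} Focus on the $L_\alpha$ integral; the $L_\beta$ case is dual under $\mathcal T$. Factor the phase as
\begin{equation*}
\pi i z^2 - s\log z = \pi i\alpha^2 + 2\pi i\alpha(z-\alpha) - s\log\alpha + \left[\pi i(z-\alpha)^2 - (s-1)\log\!\left(1+\tfrac{z-\alpha}\alpha\right) - \log\alpha\right],
\end{equation*}
so that $\exp(\pi i\alpha^2)\alpha^{-s}$ combines with $\exp(\pi i\beta^2 + \dots)$ from the other integral to give the explicit $\exp$-prefactor of \eqref{fab} together with $\lambda^{1/2-s}$ and the $(t/2\pi)^{-1/4}\exp((s/2-1/4)\log(t/2\pi)-it/2-i\pi/8)$ block. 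After the rescaling $z-\alpha = w/\sqrt{2\pi}$ (the same $w$ as in \eqref{wzsd}, since $2\pi\alpha^2 = t\lambda^2/\beta\cdot\beta = t\lambda^2$ up to rearrangement), the residual phase is quadratic with a $1/\sqrt t$-correction captured exactly by Siegel's series $w(z,s)=\sum_k a_k(s) z^k$ from \eqref{wzs}. Term-by-term integration against $e^{\pi i w^2/(2\pi)}/(e^{2\pi i z}-1)$ along $L_\alpha$ yields, after a Hermite expansion of the shifted Gaussian $w^k e^{\text{quadratic}}$, a sum of $G^{(r)}(a\lambda^{-1}+b\lambda;\lambda^2)$ weighted by Hermite polynomials. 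The argument $a\lambda^{-1}+b\lambda$ (rather than just $2a$) encodes the separate fractional parts, and $\omega_\lambda=e^{-\pi i/4}\sqrt{\pi/2}(a\lambda^{-1}-b\lambda)$ measures the mismatch between $\alpha$ and the saddle point at $\sqrt{t/(2\pi)}$; both collapse to the classical values when $\lambda=1$.

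\textbf{Step 3 (Assembly and error bound).} Using the symmetry \eqref{ftran} of $G$, the $L_\beta$ contribution merges with the $L_\alpha$ contribution into a single sum, yielding the expression inside \eqref{fab}. Truncating the series \eqref{wzs} at order $N$ leaves a remainder integrand of size $|w|^N t^{-N/2}$ times the original bounds; estimating this along $L_\alpha$, where $|e^{\pi i z^2}|$ decays Gaussian-like and $|\sin\pi z|^{-1}$ is controlled because the line avoids the integer grid, produces an error of the claimed form. The power $t^{N/6}$ (rather than $t^{N/2}$) is Siegel's classical bookkeeping compromise: the series \eqref{wzs} only converges for $|w|<\sqrt t$, so one must split the contour at $|w|\asymp t^{1/6}$ and bound the tail separately. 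The $\lambda^{1-\sigma}$ factor in the error comes from $\alpha^{-\sigma}\asymp \lambda^{-\sigma}(t/2\pi)^{-\sigma/2}$ together with a $\lambda$ from bounding the scaled integrand uniformly; this is why the implied constant is $\lambda$-uniform only once $\lambda\gqs 1$.

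\textbf{Main obstacle.} The cleanest part is the residue shift. The subtle point is Step 2: keeping track of all the prefactors so that precisely the exponential $\exp(\pi i(2a\beta-2b\alpha+a^2\lambda^{-2}-b^2\lambda^2)/2)$ and the $\lambda^{1/2-s}$ appear, and the Hermite polynomial picks up the right argument $\omega_\lambda$. This requires expanding $\pi i\alpha^2=\pi i(\lfloor\alpha\rfloor+a)^2$, $\alpha^{-s}=\lambda^{-s}(t/2\pi)^{-s/2}$, and the analogous $\beta$-identities in a compatible way, and verifying that the residue signs $(-1)^{\lfloor\alpha\rfloor}$ and $(-1)^{\lfloor\beta\rfloor}$ combine with the $e^{\pi i\lfloor\alpha\rfloor^2}=(-1)^{\lfloor\alpha\rfloor}$ phases to give exactly $(-1)^{\lfloor\alpha\rfloor\lfloor\beta\rfloor}$. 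A secondary difficulty is the uniform error analysis for $\lambda\ne 1$: the stationary point of the phase is displaced from $\alpha$, so one must work with a \emph{shifted} rather than a true steepest-descent contour, and show the resulting slowly-varying integrand still gives the stated $t^{-N/6}$ savings.
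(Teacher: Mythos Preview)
Your choice of starting point is the central problem. In Riemann's representation \eqref{bessel} the phase of the first integrand is $\pi i z^2 - s\log z$, whose stationary point is at $z_0=\sqrt{s/(2\pi i)}\approx\sqrt{t/(2\pi)}$, \emph{independently of $\alpha$}. When you shift the line to $L_\alpha$ crossing $(\lfloor\alpha\rfloor,\lfloor\alpha\rfloor+1)$ and expand about $\alpha$, the linear term in the exponent is
\[
\Bigl(2\pi i\alpha-\tfrac{s}{\alpha}\Bigr)(z-\alpha)\ \approx\ 2\pi i(\alpha-\beta)(z-\alpha),
\]
which is of size $\sqrt{t}$ as soon as $\lambda\ne 1$. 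This is not a ``$1/\sqrt{t}$-correction,'' and it is not what Siegel's series $w(z,s)$ in \eqref{wzsd}--\eqref{wzs} encodes. Consequently your Step~2 cannot produce a controlled expansion with error $O(t^{-N/6})$; the contour sits far from the saddle and the truncated Taylor remainder is not small. The same objection applies to the second integral shifted to $L_\beta$. Your Step~3 then inherits two uncontrolled expressions, and the assertion that the symmetry \eqref{ftran} ``merges'' them into the single sum of \eqref{fab} is not justified. (As a smaller symptom, the sign bookkeeping you sketch, $(-1)^{\lfloor\alpha\rfloor}\cdot(-1)^{\lfloor\beta\rfloor}$, does not in general give $(-1)^{\lfloor\alpha\rfloor\lfloor\beta\rfloor}$.)

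The paper avoids this obstruction by \emph{not} using \eqref{bessel}. It starts from the one-integral formula \eqref{8} with $m=\lfloor\alpha\rfloor$ already chosen; the saddle point of $z^{s-1}e^{-2\pi i m z}$ is then $\xi=(s-1)/(2\pi i m)\approx\beta$, so a single contour shift to $L_1$ through $\beta$ both harvests the residues $\chi(s)\sum_{n\le\beta}n^{s-1}$ and lands on the line of steepest descent. Expanding about $\beta$ leaves only the \emph{bounded} linear piece $2\pi i\,a\,(z-\beta)$ with $a=\alpha-\lfloor\alpha\rfloor$, and the remaining factor is exactly $w(\sqrt{2\pi}\lambda(z-\beta),s)$. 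The truncation error is then controlled by Siegel's remainder estimate (Lemma~\ref{rnzs}) and Proposition~\ref{longj}, the line is extended to infinity (Lemma~\ref{inft}), and the resulting integral is rewritten in terms of $G$ via a Hermite/generating-function identity. The sign $(-1)^{\lfloor\alpha\rfloor\lfloor\beta\rfloor}$ comes from $e^{-2\pi i m\beta}$, not from residue signs. If you want to repair your argument, the essential change is to start from \eqref{8} rather than \eqref{bessel}; the representation \eqref{bessel} is well suited only to the symmetric case $\alpha=\beta=\sqrt{t/(2\pi)}$ (this is Gabcke's and Arias de Reyna's setting), precisely because its saddle cannot be moved.
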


The proof is given in this section and follows the main lines of Siegel's work in \cite[pp. 278-285]{Si32}. See also \cite[Chap. 4]{Ti86} and \cite[Chap. 7]{Ed74}.
For any $m\in \Z_{\gqs 1}$ we begin, as in \cite[Eq. (8)]{Si32},  with
\begin{equation}\label{8}
  \zeta(s)= \sum_{n=1}^m \frac 1{n^{s}}+\frac {(2\pi)^s e^{\pi i s/2}}{\G(s)(e^{2\pi i s}-1)}\int_{C} \frac{z^{s-1}e^{-2\pi i mz}}{e^{2\pi i z}-1}\, dz.
\end{equation}
The contour $C$ starts at $-i \infty$ (with $\arg z = -\pi/2$), moves up the imaginary axis, circles close to $0$ and then returns to $-i \infty$ (with $\arg z = 3\pi/2$) as displayed in Figure \ref{bfig}. Formula \e{8} is valid for all $s\in \C$ and shows that $\zeta(s)$ is  holomorphic everywhere, except for a pole at $s=1$, since $1/(\G(s)(e^{2\pi i s}-1))$ has poles  exactly for $s\in \Z_{\gqs 1}$ and $\int_C$ has zeros for $s\in \Z_{\gqs 2}$. The asymptotics of \e{8} as $t\to \infty$ are obtained with the saddle-point method. The idea (see for example \cite[Chap. 4]{Ol}, \cite{Pet97}, \cite{OSper}) is to move the path of integration so that the main contribution to the integral in \e{8} comes from the neighborhood of the saddle-point of the integrand -- where its derivative with respect to $z$ is zero. For simplicity we just find the saddle-point of the numerator $z^{s-1}e^{-2\pi i mz}$. It is
the value
\begin{equation}\label{9}
  \xi:=\frac{s-1}{2\pi i m} = \frac{t}{2\pi m}+\frac{1-\sigma}{2\pi m}i
\end{equation}
and so we need to move $C$ so that it passes close to $\xi$. A short calculation similar to \e{riew}, \e{ww} shows that
\begin{equation} \label{dec50}
  z^{s-1}e^{-2\pi i mz} = \xi^{s-1}e^{-2\pi i m \xi} \exp\left(\frac{2\pi^2 m^2}{s-1}(z-\xi)^2+O\left((z-\xi)^3\right) \right)
\end{equation}
for $z$ close to $\xi$. Then
\begin{equation*}
  \Re\left( \frac{2\pi^2 m^2}{s-1}(z-\xi)^2\right) = \frac{2\pi^2 m^2}{|s-1|}|z-\xi|^2 \cos\left( 2\arg(z-\xi)-\arg(s-1)\right)
\end{equation*}
and so the directions in which \e{dec50} is decreasing the fastest, as $z$ moves away from $\xi$, are when the cosine is $-1$. For $\arg(s-1)$ close to $\pi/2$ this corresponds to $\arg(z-\xi)$ close to $3\pi/4$ and $-\pi/4$.

The poles of the integrand in \e{8} occur at  integers $j$ with residues essentially $j^{s-1}$. This means that moving $C$ to pass through $\xi$ will add a sum of the form $\sum_{j \lqs t/(2\pi m)} j^{s-1}$, giving the desired second part of the approximate functional equation.

As in the statement of the theorem, we choose $\alpha,\beta \in \R_{\gqs 1}$ with  $t=2\pi \alpha \beta$. Let $ m=\lfloor \alpha \rfloor$.
Then
\begin{equation}\label{xxi}
  \xi \approx \frac{t}{2\pi m} \approx \frac{t}{2\pi \alpha} = \beta
\end{equation}
and, following Riemann, we will use $\beta$ as our base point instead of $\xi$.
Similarly to Siegel we introduce the abbreviations
\begin{equation*}
  \varepsilon : = e^{3\pi i/4}, \qquad
    g(z) :=  \frac{z^{s-1} e^{-2\pi i m z}}{e^{2\pi i z}-1}.
\end{equation*}
The contour of integration $C$ in \e{8} is moved to a new contour $C_\beta$  that encloses exactly the integers from $-\lfloor \beta \rfloor$ to $\lfloor \beta \rfloor$ and passes through $\beta$ in the desired direction of steepest descent $\varepsilon$. As shown in Figure \ref{bfig}, $C_\beta$ is made with five lines.
\SpecialCoor
\psset{griddots=5,subgriddiv=0,gridlabels=0pt}
\psset{xunit=0.8cm, yunit=0.8cm, runit=0.8cm}
\psset{linewidth=1pt}
\psset{dotsize=3.5pt 0,dotstyle=*}
\psset{arrowscale=1.5,arrowinset=0.3}
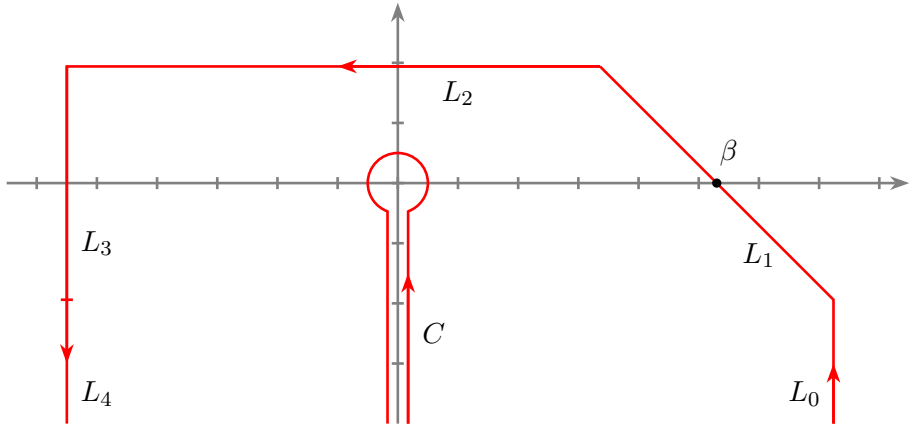
\begin{figure}[ht]
\centering
\begin{pspicture}(-6,-4)(8,3) 

\psline[linecolor=gray]{->}(-6.5,0)(8.5,0)
\psline[linecolor=gray]{->}(0,-4)(0,3)

\psarc[linecolor=red](0,0){0.5}{-70}{250}
\psline[linecolor=red](0.17101,-0.455)(0.17101,-4)
\psline[linecolor=red]{->}(0.17101,-4)(0.17101,-1.5)
\psline[linecolor=red](-0.17101,-0.455)(-0.17101,-4)

\multirput(-6,-0.1)(1,0){15}{\psline[linecolor=gray](0,0)(0,0.2)}
\multirput(-0.1,-3)(0,1){6}{\psline[linecolor=gray](0,0)(0.2,0)}

\psline[linecolor=red]{->}(7.24,-4)(7.24,-3)
\psline[linecolor=red](7.24,-4)(7.24,-1.94)(3.36,1.94)
\psline[linecolor=red]{->}(3.36,1.94)(-1,1.94)
\psline[linecolor=red](3.36,1.94)(-5.5,1.94)(-5.5,-4)
\psline[linecolor=red]{->}(-5.5,1.94)(-5.5,-3)
\psline[linecolor=red](-5.6,-1.94)(-5.4,-1.94)

\psdot(5.3,0)

\rput(0.6,-2.5){$C$}
\rput(-5,-1){$L_3$}
\rput(-5,-3.5){$L_4$}
\rput(1,1.5){$L_2$}
\rput(6,-1.2){$L_1$}
\rput(6.76,-3.5){$L_0$}
\rput(5.5,0.5){$\beta$}


\end{pspicture}
\caption{Contours of integration $C$ and $C_\beta=L_0 \cup L_1 \cup \cdots \cup L_4$}
\label{bfig}
\end{figure}
The first, $L_0$, is the vertical line ending at $\beta-\varepsilon \beta/2$ and then $L_1$ goes from $\beta-\varepsilon \beta/2$ to $\beta+\varepsilon \beta/2$. We require $L_1$ to cross the real line in the open interval $(\lfloor \beta\rfloor,\lfloor \beta\rfloor+1)$; this requires moving the path slightly to the right when $\beta \in \Z$. The horizontal line $L_2$ continues until its real part reaches $-\lfloor \beta \rfloor -1/2$.  The vertical lines $L_3$ and $L_4$ complete the contour with $L_3$ finishing, and $L_4$ starting, level with where $L_0$ finishes.  This is at the imaginary value $-\beta i/(2\sqrt{2})$.

Then
$$
\int_{C} g(z)\, dz = (e^{\pi i s}-1)\sum_{n=1}^{\lfloor \beta \rfloor} \frac 1{n^{1-s}} + \int_{C_\beta} g(z)\, dz
$$
and hence
\begin{equation}\label{13}
  \zeta(s) = \sum_{n\lqs \alpha} \frac 1{n^{s}}+\chi(s) \sum_{n\lqs \beta } \frac 1{n^{1-s}} +\frac {(2\pi)^s e^{\pi i s/2}}{\G(s)(e^{2\pi i s}-1)}\int_{C_\beta} g(z)\, dz .
\end{equation}

\begin{prop} \label{cgz}
For $\alpha,\beta\gqs 1$ and $\sigma \in I$, we have
\begin{equation} \label{20}
  \int_{C_\beta} g(z)\, dz = \int_{L_1} g(z)\, dz +O\left( e^{-t/100}\right).
\end{equation}
The implied constant depends only on $I$.
\end{prop}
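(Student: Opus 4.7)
The plan is to split $\int_{C_\beta} g = \sum_{j=0}^{4}\int_{L_j} g$ and show that each of the four pieces $\int_{L_j}g$ with $j\neq 1$ is individually $O(e^{-t/100})$. For every line the workhorse will be the pointwise identity
\begin{equation*}
|g(z)| = \frac{|z|^{\sigma-1}\exp\bigl(-t\arg z + 2\pi m\,\Im z\bigr)}{|e^{2\pi i z}-1|},
\end{equation*}
with $\arg z$ the continued branch along $C_\beta$ (running from $-\pi/2$ on $L_0$ through $0$ at $\beta$ up to $3\pi/2$ at the end of $L_4$), combined with the arithmetic relation $2\pi m\beta \lqs t = 2\pi\alpha\beta < 2\pi(m+1)\beta$, which lets one trade $2\pi m y$ in the exponent against $(t/\beta)y$ up to bounded error.

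The most delicate case is $L_0$, the vertical half-line at $\Re z = \beta(1 + 1/(2\sqrt 2))$ with $y := \Im z \lqs -\beta/(2\sqrt 2)$. Since $y$ is bounded away from $0$, $|e^{2\pi iz}-1|^{-1} \lqs 2e^{2\pi y}$ and so $|g(z)| \ll |z|^{\sigma-1}\exp(-t\arg z + 2\pi(m+1)y)$. A short computation of the $y$-derivative of the exponent shows this upper bound is monotone increasing in $y$ on $L_0$, so its maximum sits at the top endpoint, where $-t\arg z = t\arctan(1/(2\sqrt 2 + 1)) \approx 0.254\,t$ is outweighed by $2\pi(m+1)y \lqs -t/(2\sqrt 2) \approx -0.354\,t$, producing an exponent below $-t/11$. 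Integrating $e^{2\pi(m+1)y}$ over $(-\infty,-\beta/(2\sqrt 2)]$ contributes only a harmless factor $1/(2\pi(m+1))$, and the polynomial $|z|^{\sigma-1}$ is absorbed by the gap between $1/11$ and $1/100$, yielding $\int_{L_0}g = O(e^{-t/100})$.

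The remaining three lines are easier. On $L_4$ the continued $\arg z$ lies in $(\pi,3\pi/2)$, so $-t\arg z$ alone already gives decay faster than $e^{-\pi t}$. On the bounded vertical segment $L_3$, $\arg z$ stays close to $\pi$, and $|e^{2\pi iz}-1|^{-1}$ is bounded because $\Re z = -\lfloor\beta\rfloor-1/2$ is a half-integer; the resulting exponent is at most $-(\pi - 1/(2\sqrt 2))t$. On the horizontal segment $L_2$ at height $\beta/(2\sqrt 2)$, $|e^{2\pi iz}-1|\gqs 1-e^{-\pi/\sqrt 2}>0$ uniformly for $\beta \gqs 1$, and the worst point is the right endpoint (adjacent to $L_1$) where $-t\arg z + 2\pi m\Im z < -0.15\,t$; the length $O(\beta)=O(t)$ of $L_2$ is swamped by the exponential.

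The main obstacle is keeping every numerical constant uniform in $\alpha,\beta\gqs 1$ and $\sigma\in I$. The tight regime is small $\beta$, when the endpoints of the various lines sit only slightly off the real axis and one must verify that $|e^{2\pi i z}-1|^{-1}$ and $|z|^{\sigma-1}$ do not erode the exponential margin. The $1/2$ offset in the real part of $L_3, L_4$ and the slight horizontal shift of $L_1$ when $\beta\in\Z$ are precisely what keep $C_\beta$ uniformly bounded away from the poles of $1/(e^{2\pi iz}-1)$; with that in place, the gap between the computed decay rate (at worst $t/11$ on $L_0$, much faster elsewhere) and the target $t/100$ leaves ample room for the polynomial factors, and the stated bound follows.
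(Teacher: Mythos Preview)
Your overall strategy matches the paper's: split $C_\beta\setminus L_1$ into its four pieces and bound each $\int_{L_j}|g|$ using the pointwise identity $|g(z)|=|z|^{\sigma-1}\exp(-t\arg z+2\pi m\,\Im z)/|e^{2\pi iz}-1|$. Your treatment of $L_2,L_3,L_4$ is fine (and in fact sharper on $L_3,L_4$ than the paper, which handles $L_2\cup L_3\cup L_4$ with a single crude lower bound $\arg z>1/(2\sqrt2)+1/8$). Your device on $L_0$ of folding the denominator bound $|e^{2\pi iz}-1|^{-1}\lqs 2e^{2\pi y}$ into the exponent --- replacing $m$ by $m+1$ --- is a genuine improvement: it makes the exponent $E(y)=-t\arg z+2\pi(m+1)y$ increasing for \emph{all} $\alpha\gqs 1$, so you avoid the paper's case split into $\alpha\gqs 10$ versus $1\lqs\alpha\lqs 10$.

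There is, however, a real gap in the $L_0$ step. Monotonicity of $E$ gives only the pointwise bound $E(y)\lqs E(y_0)<-t/11$, which does not by itself control the infinite integral $\int_{-\infty}^{y_0}|z|^{\sigma-1}e^{E(y)}\,dy$. Your sentence ``integrating $e^{2\pi(m+1)y}$ contributes only a harmless factor $1/(2\pi(m+1))$'' implicitly splits $E(y)=(-t\arg z)+2\pi(m+1)y$ and bounds the first summand by its value at the top; but $-t\arg z$ is \emph{not} bounded there --- on $L_0$ it increases from $t\arctan(1/(2\sqrt2+1))$ at $y_0$ all the way to $\pi t/2$ as $y\to-\infty$. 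The fix is already contained in your derivative computation: one checks
\[
E'(y)=2\pi(m+1)-\frac{t x_0}{x_0^2+y^2}\ \gqs\ 2\pi(m+1)-\frac{t}{x_0}\ >\ \frac{2\pi}{2\sqrt2+1}
\]
uniformly in $\alpha,\beta\gqs 1$ (using $m+1>\alpha$ and $t/x_0=2\pi\alpha\cdot 2\sqrt2/(2\sqrt2+1)$). Hence $E(y)\lqs E(y_0)+c(y-y_0)$ with $c=2\pi/(2\sqrt2+1)$, so $e^{E(y)}$ is genuinely integrable against $|z|^{\sigma-1}$ with a constant depending only on $I$, and $\int_{L_0}|g|\ll e^{-t/11}\ll e^{-t/100}$ follows. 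The same remark applies to your sketch for $L_4$: the pointwise bound $-t\arg z<-\pi t$ must be paired with the decaying factor $e^{-2\pi|y|}$ from the denominator to make the half-line integral converge.
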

\begin{proof}
For the numerator of $g(z)$,
\begin{equation*}
  \left| z^{s-1} e^{-2\pi i m z}\right| = |z|^{\sigma -1}e^{2\pi m y-t \arg z } .
\end{equation*}
Our first claim is that
\begin{equation} \label{clm}
  \left| z^{s-1} e^{-2\pi i m z}\right| \lqs |z|^{\sigma -1} \times \begin{cases}
e^{-t/8} & \text{if $z\in L_2\cup L_3 \cup L_4$},\\
e^{-t/20} & \text{if $z\in L_0$ and $10\lqs \alpha$},\\
e^{\pi |y|} & \text{if $z\in L_0$ and $1\lqs \alpha \lqs 10$}.
\end{cases}
\end{equation}
For $z\in L_2\cup L_3 \cup L_4$,
\begin{equation*}
  y \lqs \frac \beta{2\sqrt 2} \qquad \text{and} \qquad \arg z \gqs \arctan\left( \frac 1{2\sqrt 2-1}\right) >\frac 1{2\sqrt 2} +\frac 18.
\end{equation*}
Hence
\begin{equation*}
  \left| z^{s-1} e^{-2\pi i m z}\right| \lqs |z|^{\sigma -1}\exp\left( \frac{2\pi \alpha \beta}{2\sqrt 2} -\frac t{2\sqrt 2} -\frac t8\right) = |z|^{\sigma -1}e^{-t/8} .
\end{equation*}
With $z\in L_0$  we have
\begin{equation*}
  y \lqs -\frac \beta{2\sqrt 2} \qquad \text{and} \qquad \arg z = -\arctan\left( \frac {|y|}{\beta(1+1/(2\sqrt 2))}\right)
\end{equation*}
so that
\begin{equation}\label{rus}
  \left| z^{s-1} e^{-2\pi i m z}\right|  \lqs |z|^{\sigma -1}\exp\left(
  -2\pi \lfloor \alpha \rfloor |y|+ \frac {t |y|}{\beta(1+1/(2\sqrt 2))}
  \right).
\end{equation}
If $\alpha \gqs 10$ then replacing $\lfloor \alpha \rfloor$ by $\alpha -1$ in \e{rus} shows
\begin{align}
  \left| z^{s-1} e^{-2\pi i m z}\right| & \lqs  |z|^{\sigma -1}\exp\left(
  -t \frac{|y|}\beta\left[\frac{1}{1+2\sqrt 2}-\frac 1\alpha \right]
  \right) \notag\\
  & \lqs |z|^{\sigma -1}\exp\left(
  -\frac t{2\sqrt 2}\left[\frac{1}{1+2\sqrt 2}-\frac 1\alpha \right]
  \right) < |z|^{\sigma -1}e^{-t/20}. \label{245}
\end{align}
If $1 \lqs \alpha \lqs 10$ then writing $2\pi \alpha$ for $t/\beta$ in \e{rus} shows
\begin{equation*}
  \left| z^{s-1} e^{-2\pi i m z}\right|  \lqs  |z|^{\sigma -1}\exp\left(
  2\pi |y|\left[-\lfloor \alpha \rfloor + \frac {\alpha}{1+1/(2\sqrt 2)}
 \right] \right)  < |z|^{\sigma -1}e^{\pi |y|}.
\end{equation*}
This completes the verification of the claim \e{clm}.

For the denominator of $g(z)$:
\begin{align*}
  |e^{2\pi i z}-1|^{-1} \lqs (1-e^{-\pi \beta/\sqrt 2})^{-1} \lqs (1-e^{-\pi/\sqrt 2})^{-1} < 2 & \qquad \text{for} \qquad z \in L_2, \\
  |e^{2\pi i z}-1|^{-1} = (1+e^{-2\pi y})^{-1} < 1 & \qquad \text{for} \qquad z \in L_3.
\end{align*}
Hence, for $z$ in $L_2 \cup L_3$
  we have that $g(z) \ll \beta^{\sigma -1}e^{-t/8}$.
Therefore
\begin{equation*}
  \int_{L_2 \cup L_3} g(z)\, dz  \ll \beta^{\sigma}e^{-t/8} \ll t^\sigma e^{-t/8} \ll e^{-t/20}.
\end{equation*}

For $z$ in $L_0 \cup L_4$ we have
\begin{equation*}
  |z| < 4|y| \qquad \text{and} \qquad  |e^{2\pi i z}-1|^{-1} < 2e^{-2\pi|y|}.
\end{equation*}
Therefore
\begin{equation*}
  \int_{L_4} g(z)\, dz   \ll e^{-t/20} \int_{1/(2\sqrt{2})}^\infty e^{-2\pi y} y^{\sigma-1} \,dy \ll e^{-t/20},
\end{equation*}
and we obtain the same bound for $\int_{L_0} g(z)\, dz$ when $10\lqs \alpha$. In the final case with $1 \lqs \alpha \lqs 10$,
\begin{equation*}
  \int_{L_0} g(z)\, dz  \ll  \int_{\beta/(2\sqrt{2})}^\infty e^{-\pi y} y^{\sigma-1} \,dy \ll \int_{t/(40 \sqrt{2}\pi)}^\infty e^{-2\pi y/3} \,dy \ll e^{-t/100}. \qedhere
\end{equation*}
\end{proof}

\subsection{The saddle-point method}
 The work in this paper grew out of the project \cite{OSper} which aimed to clarify some aspects of the saddle-point method, as elegantly formulated by Perron in 1917. The paper \cite{Pet97} documents that   this method originated with Riemann, and it is remarkable that one of his first applications was to finding the asymptotic expansion for the difficult case of $\zeta(s)$.

In simpler applications of the saddle-point method, such as \cite[Cor. 1.4]{OSper}, the part of the integrand containing the growing parameter $N$ is expanded into the form $\exp(N c (z-\xi)^2)$ times a power series in $z$ about the fixed saddle-point $\xi$. The behaviour of the integral for $z$ close to $\xi$ will control the asymptotics. Our case is more difficult as the saddle-point  \e{9} is not fixed and changes with the  parameters $s$ and $\alpha$. Adding to the complications, it is inconvenient to expand  the numerator of $g(z)$ about $\xi$ and we expand about the nearby point $\beta$ instead:
\begin{equation} \label{riew}
  z^{s-1} e^{-2\pi i m z} = \beta^{s-1}e^{-2\pi i m\beta}
   \exp\left(-2\pi i m(z-\beta)+ (s-1)\log \left(1+\frac{z-\beta}\beta\right)\right).
\end{equation}
The argument of $\exp$ above may be developed as
\begin{equation} \label{ww}
  -2\pi i m(z-\beta) -(s-1)\sum_{j=1}^\infty \frac{(-1)^j}{j}\left(\frac{z-\beta}{\beta}\right)^j.
\end{equation}
When $z$ is close to $\beta$ we find \e{ww} is
\begin{multline}\label{ww2}
  \left(\frac {\sigma-1+i t}\beta -2\pi i m\right)(z-\beta) + \frac{1-\sigma-i t}{2}\left(\frac{z-\beta}{\beta}\right)^2 + O\left(\left(\frac{z-\beta}{\beta}\right)^3\right) \\
  =\left[i\left(\frac t\beta -2\pi m\right)(z-\beta) -  \frac{i t}{2\beta^2}(z-\beta)^2\right]
  + (\sigma-1) \left(\frac{z-\beta}{\beta}\right) -\frac{\sigma-1}{2} \left(\frac{z-\beta}{\beta}\right)^2 + O\left( \left(\frac{z-\beta}{\beta}\right)^3\right).
\end{multline}
For $\beta$ large, the piece
\begin{equation*}
  \left[i\left(\frac t\beta -2\pi m\right)(z-\beta) -  \frac{i t}{2\beta^2}(z-\beta)^2\right]
  = 2\pi i(\alpha -m)(z-\beta) -\pi i \frac \alpha \beta (z-\beta)^2
\end{equation*}
of \e{ww2} will be biggest. Therefore we separate it out and, recalling \e{wzsd}, write
\begin{equation*}
  z^{s-1} e^{-2\pi i m z} = \beta^{s-1}e^{-2\pi i m\beta} \exp\left(   -\pi i \frac \alpha \beta (z-\beta)^2+ 2\pi i(\alpha -m)(z-\beta)\right) w\left(\frac{\sqrt t}{\beta}(z-\beta),s\right).
\end{equation*}
Since $\alpha/\beta=\lambda^2$ and $\sqrt t/\beta=\sqrt{2\pi}\lambda$, we obtain
\begin{equation} \label{il1}
  \int_{L_1} g(z)\, dz = \beta^{s-1}e^{-2\pi i m\beta} \int_{L_1} \frac{\exp\left(   -\pi i \lambda^2 (z-\beta)^2+ 2\pi i a(z-\beta)\right)}{e^{2\pi i z}-1}w\left(\sqrt{2\pi}\lambda(z-\beta),s\right)\, dz  .
\end{equation}

The next step is to replace $w(z,s)$ by the first terms in its expansion \e{wzs}.
The tail of this power series has the
presentation
\begin{equation}\label{20b}
  r_n(z,s):=\sum_{k=n}^\infty a_k(s) z^k =\frac{z^n}{2\pi i}\int_{\mathcal C} \frac{ w(u,s)}{u^n (u-z)} \, du,
\end{equation}
with $\mathcal C$  a curve inside the disc $|u|<\sqrt{t}$ which encircles $0$ and $z$ in the positive direction. Siegel bounded $r_n(z,s)$ precisely and for completeness we include his proof \cite[pp. 281-282]{Si32} since all the error bounds depend on it.

\begin{lemma} \label{rnzs}
For $n \in \Z_{\gqs 0}$, $\sigma \in I$ and $t>0$ we have the estimates
\begin{alignat}{2} \label{24}
  r_n(z,s) & = O\left( \frac{|z|^n}{t^{n/6}}\right) &\qquad \text{for} &\qquad 1\lqs n \lqs \frac{27}{50}t, \ |z|\lqs \frac{20}{21}\left(\frac{2n  \sqrt{t}}{5} \right)^{1/3} , \\
  r_n(z,s) & = O\left( e^{14|z|^2/29}\right) &\qquad  \text{for} & \qquad |z| \lqs  \sqrt{t}/2 . \label{25}
\end{alignat}
The implied constants depend only on $I$ and $n$.
\end{lemma}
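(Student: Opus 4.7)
The plan is to apply Cauchy's formula to the power series \e{wzs}, giving the contour representation stated just before the lemma, and then to take $\mathcal C$ to be a circle $|u|=R$ for a radius $R$ strictly between $|z|$ and $\sqrt t$ chosen differently in the two regimes. The crude $ML$-bound then yields
\begin{equation*}
|r_n(z,s)| \lqs \frac{|z|^n}{R^{n-1}(R-|z|)}\max_{|u|=R}|w(u,s)|,
\end{equation*}
reducing the problem to estimating $\max_{|u|=R}|w(u,s)|$ and optimising $R$.

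The key algebraic step is that, since $-i\sqrt t u + iu^2/2$ cancels the first two terms of $it\log(1+u/\sqrt t)$, one can write
\begin{equation*}
\log w(u,s) = (\sigma-1)\log\left(1+\frac{u}{\sqrt t}\right) + i\, h(u), \qquad h(u) := t\sum_{j\geq 3}\frac{(-1)^{j+1}}{j}\left(\frac{u}{\sqrt t}\right)^j.
\end{equation*}
A term-by-term triangle inequality then produces, for $|u|<\sqrt t$,
\begin{equation*}
|w(u,s)| \lqs \left|1+\frac{u}{\sqrt t}\right|^{\sigma-1}\exp\left(\frac{|u|^3}{3\sqrt t\,(1-|u|/\sqrt t)}\right),
\end{equation*}
with the prefactor $O_I(1)$ whenever $|u|/\sqrt t$ stays bounded away from $1$.

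For \e{24} I would choose $R$ as a fixed multiple of $(n\sqrt t)^{1/3}$ strictly exceeding $|z|$, so that $R^3/\sqrt t=O(n)$ and the exponential in the bound on $|w|$ is itself $O_n(1)$. The hypothesis $|z|\lqs \tfrac{20}{21}(2n\sqrt t/5)^{1/3}$ is precisely what leaves room for such a choice with $R-|z|\asymp R$, while the hypothesis $n\lqs 27t/50$ keeps $R$ well below $\sqrt t$ so that the prefactor bound on $|w|$ applies. One then has $R^{-n}\asymp_n t^{-n/6}$, yielding $|r_n(z,s)|\ll_{I,n}|z|^n/t^{n/6}$ as claimed.

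For \e{25} the radius must instead scale with $|z|$: I would take $R=(1+\eta)|z|$ for a suitable small $\eta>0$, so that the geometric factor $(|z|/R)^nR/(R-|z|)$ contributes only a constant depending on $n$. Substituting $|u|=R$ into the bound on $|w|$ and using $|z|\lqs \sqrt t/2$ to convert the cubic exponent $|u|^3/\sqrt t$ into a quadratic one (writing $|u|^3/\sqrt t = |u|^2\cdot |u|/\sqrt t$ and noting $|u|/\sqrt t\lqs(1+\eta)/2$), the exponential takes the form $\exp(c(\eta)|z|^2)$ with $c(\eta)=(1+\eta)^3/(3(1-\eta))\to 1/3$ as $\eta\to 0$. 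The main obstacle is tightening all the constants to the sharp value $14/29$: this requires balancing $\eta$ (which controls the geometric factor and pushes $R/\sqrt t$ toward $1/2$) against the partial-sum bound used for $h(u)$, so that every intermediate inequality is simultaneously tight. Siegel's specific fraction $14/29$ emerges from this numerical optimisation; no part of the underlying scheme changes if one is willing to accept a weaker constant.
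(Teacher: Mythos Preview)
Your approach is essentially the same as the paper's (which in turn follows Siegel): Cauchy's integral formula on a circle $|u|=\rho_n$, the same power-series manipulation of $\log w(u,s)$ to extract the cubic exponent, and the same two choices of radius. The paper works on the disc $|u|\lqs 3\sqrt{t}/5$, where your bound $\tfrac{|u|^3}{3\sqrt t\,(1-|u|/\sqrt t)}$ becomes $\tfrac{5|u|^3}{6\sqrt t}$, and then for \e{24} takes $\rho_n=(2n\sqrt t/5)^{1/3}$ exactly (the minimizer of $\rho^{-n}e^{5\rho^3/(6\sqrt t)}$), while for \e{25} it takes $\rho_n=\tfrac{21}{20}|z|$.

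One small correction: the constant $14/29$ is not the result of any delicate optimisation. Once you fix $\rho_n=\tfrac{21}{20}|z|$ (the same ratio already forced by the constraint $\rho_n\gqs \tfrac{21}{20}|z|$ from \e{24}) and use $|z|\lqs \sqrt t/2$ to bound $|z|^3/\sqrt t\lqs |z|^2/2$, the exponent comes out as $\tfrac{5}{6}\cdot(\tfrac{21}{20})^3\cdot\tfrac12\,|z|^2 \approx 0.4823\,|z|^2$, and $14/29\approx 0.4828$ is just a convenient upper bound. No balancing is needed.
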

\begin{proof}
With \e{wzsd},
\begin{equation*}
  \log w(u,s) = (\sigma -1) \log \left(1+\frac u{\sqrt{t}}  \right) +i u^2 \sum_{k=1}^\infty \frac{(-1)^{k-1}}{k+2} \left( \frac u{\sqrt{t}} \right)^k.
\end{equation*}
Therefore, in the circle $| u | \lqs 3\sqrt{t} /5$ we have
\begin{equation}\label{21}
  \Re \left(\log w(u,s)\right) \lqs |\sigma -1| \log \frac 85 +\frac{5|u|^3}{6\sqrt{t} } .
\end{equation}
In \e{20b} let $|z|\lqs 4\sqrt{t} /7$ and let $\mathcal C$ be a circle around $u = 0$ with a radius $\rho_n$ satisfying
\begin{equation}\label{22}
  \frac{21}{20}|z| \lqs \rho_n \lqs \frac 35 \sqrt{t} .
\end{equation}
Then \e{20b}, \e{21}, \e{22} imply
the estimate
\begin{equation}\label{23}
  r_n(z,s) = O\left( |z|^n \rho_n^{-n} \exp\left( \frac 5{6\sqrt{t} } \rho_n^3\right)\right).
\end{equation}
For $n\gqs 1$, the function $\rho^{-n} e^{5\rho^3/(6 \sqrt{t}) }$ of $\rho$ reaches its minimum $\left(\frac{5e}{2n \sqrt{t} } \right)^{n/3}$ for $\left(\frac{2n \sqrt{t} }{5} \right)^{1/3}$.
According to \e{22}, the choice $\rho_n=\rho$ is admissible if
\begin{equation*}
\frac{21}{20}|z| \lqs \left(\frac{2n \sqrt{t} }{5} \right)^{1/3}\lqs \frac 35 \sqrt{t} .
\end{equation*}
Consequently we obtain \e{24}. For $n\gqs 0$ and $|z|\lqs 4\sqrt{t} /7$, the choice $\rho_n =21|z|/20$ is also admissible according to \e{22}; from this we obtain \e{25}.
\end{proof}

\subsection{Error estimates}
If we replace $w(\cdot,s)$ in \e{il1} by the first $n$ terms of its expansion \e{wzs} then the error involves the integral
\begin{equation} \label{jn}
  J_n(s;\alpha,\beta):=\int_{L_1} \frac{\exp\left(   -\pi i \lambda^2 (z-\beta)^2+ 2\pi i a(z-\beta)\right)}{e^{2\pi i z}-1}r_n\left(\sqrt{2\pi} \lambda(z-\beta),s\right)\, dz  .
\end{equation}

\begin{prop} \label{longj}
Suppose $\sigma \in I$, $t>0$ and $n \in \Z_{\gqs 0}$. Then we have
\begin{equation} \label{jt}
  J_n(s;\alpha,\beta) = O\left( t^{-n/6}\right)
\end{equation}
for an implied constant depending only on $I$, $n$ and $\lambda$. If $\lambda\gqs 1$ then the implied constant does not depend on $\lambda$.
\end{prop}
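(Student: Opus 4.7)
The plan is to parametrize $L_1$ by $z = \beta + \varepsilon u$ with $u$ real in $[-\beta/2, \beta/2]$, so that $dz = \varepsilon\, du$. Since $\varepsilon^2 = e^{3\pi i/2} = -i$, the Gaussian-type factor in \e{jn} has
\[
\left|\exp\left(-\pi i \lambda^2(z-\beta)^2 + 2\pi i a(z-\beta)\right)\right| = \exp\left(-\pi \lambda^2 u^2 - \sqrt 2\,\pi a u\right) = e^{\pi a^2/(2\lambda^2)}\exp\left(-\pi\lambda^2\left(u+\tfrac{a}{\sqrt 2\,\lambda^2}\right)^2\right).
\]
The prefactor $e^{\pi a^2/(2\lambda^2)}$ is bounded by $e^{\pi/2}$ when $\lambda \gqs 1$ and otherwise absorbed into a $\lambda$-dependent constant. (A tiny horizontal shift of $L_1$, as in the definition of the contour when $\beta \in \Z$, changes nothing here.)

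Next I would check that $|e^{2\pi i z}-1|^{-1}$ is uniformly bounded on $L_1$. By the placement of $L_1$ the real-axis crossing is bounded away from $\Z$, so the factor is $O(1)$ near the crossing. Away from the crossing, $\Im(z) = u/\sqrt 2$ gives $|e^{2\pi i z}| = e^{-\sqrt 2\,\pi u}$, which tends to $0$ as $u \to +\infty$ (making $|e^{2\pi iz}-1|^{-1} \to 1$) and to $\infty$ as $u\to-\infty$ (giving additional decay $O(e^{-\sqrt 2\,\pi|u|})$). In every case the denominator contributes a uniformly $O(1)$ factor to the integrand.

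The heart of the argument is to split the integration at $|u| = U_1 := \frac{20}{21}\bigl(\tfrac{2n\sqrt t}{5}\bigr)^{1/3}/(\sqrt{2\pi}\,\lambda)$. The identity $\sqrt t/(\sqrt{2\pi}\,\lambda) = \beta$ shows that $U_1 \lqs \beta/2$ for $t$ sufficiently large (in $n$), and the range $U_1 \lqs |u| \lqs \beta/2$ corresponds exactly to $|z|=\sqrt{2\pi}\,\lambda|u|$ lying in the range where \e{25} applies. On the inner region $|u| \lqs U_1$, bound \e{24} gives $r_n(\sqrt{2\pi}\,\lambda\varepsilon u,s) \ll (\sqrt{2\pi}\,\lambda|u|)^n / t^{n/6}$, and the substitution $v = \lambda(u + a/(\sqrt 2\,\lambda^2))$ shows
\[
\int_{-U_1}^{U_1} e^{-\pi\lambda^2(u+c)^2}(\lambda|u|)^n\,du = O\!\left(\tfrac{1}{\lambda}\right),
\]
with the implied constant depending on $n$ and independent of $\lambda$ once $\lambda\gqs 1$. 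This gives $O(t^{-n/6})$ for the inner contribution. On the outer region $U_1 \lqs |u| \lqs \beta/2$, bound \e{25} yields $|r_n| \ll e^{28\pi\lambda^2 u^2/29}$, which multiplied by the Gaussian leaves $e^{-\pi\lambda^2 u^2/29}$; its integral beyond $U_1$ is $O(\exp(-c\, t^{1/3}))$, negligible against any $t^{-n/6}$. The residual cases $n=0$ and (for bounded $t$) $n > 27t/50$ are handled by using \e{25} throughout, where the Gaussian decay alone produces a uniformly bounded integral, comfortably covered by the stated $O(t^{-n/6})$.

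The main obstacle is bookkeeping rather than analysis: keeping track of which factors are unconditionally $O(1)$ versus which pick up explicit $\lambda^{-1}$ or $\lambda^{-n}$ dependence, and confirming that these are absorbed into the implied constant in the stated manner (independent of $\lambda$ when $\lambda\gqs 1$). A secondary point is the compatibility of the two ranges in Lemma \ref{rnzs} with the geometry of $L_1$, which is ensured precisely by the identities $\sqrt t/(\sqrt{2\pi}\,\lambda) = \beta$ and $\lambda U_1 \asymp t^{1/6}$.
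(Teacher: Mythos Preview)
Your argument has a genuine gap: the claim that $|e^{2\pi i z}-1|^{-1}$ is uniformly $O(1)$ on $L_1$ is not correct. The crossing point of $L_1$ with the real axis is at (or, after the tiny shift, essentially at) $\beta$, and the fractional part $b=\beta-\lfloor\beta\rfloor$ can be arbitrarily close to $0$ or $1$. In that case $L_1$ passes arbitrarily close to the pole at $\lfloor\beta\rfloor$ (or $\lfloor\beta\rfloor+1$), and the factor $(e^{2\pi iz}-1)^{-1}$ is unbounded near $u=0$. The ``tiny horizontal shift'' you invoke is only introduced in the paper when $\beta\in\Z$ exactly; it does not keep $L_1$ uniformly bounded away from $\Z$ as $b$ varies. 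Attempting to cure this by always shifting $L_1$ horizontally by a fixed amount $\delta$ would, under your parametrization, inject a factor $\exp(\pi\lambda^2\delta^2/2)$ into the Gaussian envelope, which destroys any hope of $\lambda$-independence when $\lambda\gqs 1$.

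The paper deals with this by splitting into cases according to the size of $b$ and, when $b$ is within $O(1/\lambda)$ of $0$ or $1$, replacing a short segment of $L_1$ by a semicircular arc of radius $1/(50\lambda)$ about $\beta$; this keeps $L_1$ at distance $\gqs 1/(100\lambda)$ from the nearest integer, so that Lemma~\ref{lemb} gives only $(e^{2\pi iz}-1)^{-1}=O(1+\lambda)$ rather than $O(1)$. That factor of $\lambda$ is then exactly compensated by the $1/\lambda$ coming from the change of variables $v=\sqrt{2\pi}\,\lambda u$, which is how the $\lambda$-independence for $\lambda\gqs 1$ is obtained. The rest of your outline---splitting the $u$-integral at the threshold determined by the two regimes \e{24} and \e{25} of Lemma~\ref{rnzs} and using Gaussian decay on the tail---matches the paper's approach; but the argument does not go through until the denominator is controlled as above.
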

\begin{proof}
Recall that $L_1$ is usually a straight line from $\beta-\varepsilon \beta/2$ to $\beta+\varepsilon \beta/2$. However, when $\beta$ is  close to $\lfloor \beta \rfloor$ or $\lfloor \beta \rfloor +1$ we will adjust the path slightly to avoid the denominator in \e{jn} becoming too small. The next lemma has a straightforward proof that is omitted.

\begin{lemma} \label{lemb}
Suppose $\delta>0$ and $z\in \C$. If $|z-m| \gqs \delta$ for all $m\in \Z$ then,
for an absolute implied constant,
\begin{equation*}
  (e^{2\pi i z}-1)^{-1} = O\left(1+\delta^{-1}\right).
\end{equation*}
\end{lemma}

The proof of the proposition breaks into
four cases.

\vskip 3mm
\noindent
{\bf Case I:} $\lambda \lqs 1$ and $1/100 \lqs b \lqs 99/100$. For these values of $b$ we may take $L_1$ to be a straight line. The part of the integrand $(e^{2\pi i z}-1)^{-1}$ in \e{jn} is absolutely bounded as we may apply Lemma \ref{lemb} with $\delta=1/(100\sqrt{2})$.

Writing $z=\beta+\varepsilon v/(\sqrt{2\pi} \lambda)$ shows that
\begin{equation} \label{fgh}
  J_n(s;\alpha,\beta) = \frac{\varepsilon}{\sqrt{2\pi} \lambda}\int_{-\sqrt{t}/2}^{\sqrt{t}/2} \frac{\exp\left(-v^2/2 +\sqrt{2\pi}  i \varepsilon a v/\lambda \right)}{e^{2\pi i(\beta+\varepsilon v/(\sqrt{2\pi} \lambda))}-1} r_n\left( \varepsilon v,s \right)\, dv
\end{equation}
and the integrand is bounded by a constant times
\begin{equation} \label{alex}
  \exp\left(-v^2/2 +\sqrt{2\pi} |v|/\lambda \right) \left| r_n\left( \varepsilon v,s \right)\right|.
\end{equation}
Using \e{25} we have
\begin{equation} \label{tro}
  J_n(s;\alpha,\beta)  \ll  \int_{0}^{\sqrt{t}/2} \exp\left(-\frac{v^2}{58}+\frac{\sqrt{2\pi}}{\lambda} v\right)\,dv  =O(1)
\end{equation}
 with an implied constant depending on $n$ and  $\lambda$. If $n=0$ then \e{tro} gives the correct bound \e{jt}.

 Now we fix $n \in \Z_{\gqs 1}$.
Assume $t\gqs 50n/27$ so that we may also use \e{24}.
Let $\mu := \frac{20}{21}(2n\sqrt{t}/5)^{1/3}$ and applying the bounds \e{24} and \e{25} to \e{alex} shows
\begin{align}
   J_n(s;\alpha,\beta) & \ll t^{-n/6} \int_0^\mu \exp\left(-\frac{v^2}{2}+\frac{\sqrt{2\pi}}{\lambda} v\right) v^n\, dv+ \int_{\mu}^{\sqrt{t}/2}\exp\left(-\frac{v^2}{58}+\frac{\sqrt{2\pi}}{\lambda} v\right)\,dv \notag\\
& \ll t^{-n/6} \int_0^\infty \exp\left(-\frac{v^2}{3}\right) v^n\, dv+ \int_{\mu}^{\infty}\exp\left(-\frac{v^2}{59}\right)\,dv \notag\\
& \ll t^{-n/6} + e^{-\mu^2/60} =O(t^{-n/6}) \qquad \text{for} \qquad t\gqs 50n/27. \label{tro2}
\end{align}
By \e{tro} we have
\begin{equation}\label{tro3}
  J_n(s;\alpha,\beta)=O(1) \qquad \text{for} \qquad 0<t< 50n/27.
\end{equation}
 Combining \e{tro2} and \e{tro3} gives the desired bound \e{jt}.

\vskip 3mm
\noindent
{\bf Case II:} $\lambda \lqs 1$ and $0\lqs b \lqs 1/100$ or $99/100 \lqs b <1$. For these values of $b$ we
let $L_1$ be the usual path of integration except that we replace the segment between $\beta-\varepsilon/50$ and  $\beta+\varepsilon/50$ with a semicircular arc of radius $1/50$ about $\beta$. If $0\lqs b \lqs 1/100$ we need the upper arc traversed in a counter-clockwise direction. For $99/100 \lqs b <1$ we need the lower arc traversed in a clockwise direction. We focus on the former case  from here and it is shown in Figure \ref{cfig}. The other case is similar.
\SpecialCoor
\psset{griddots=5,subgriddiv=0,gridlabels=0pt}
\psset{xunit=0.6cm, yunit=0.6cm, runit=0.6cm}
\psset{linewidth=1pt}
\psset{dotsize=3.5pt 0,dotstyle=*}
\psset{arrowscale=1.5,arrowinset=0.3}
\begin{figure}[ht]
\centering
\begin{pspicture}(-5,-3.5)(5,3.5) 

\psline[linecolor=gray]{->}(-5,0)(5.5,0)
\psline[linecolor=gray](-0.5,0.3)(-0.5,-0.3)

\pscircle[linecolor=black,linestyle=dotted,dotsep=1pt](-0.5,0){1.5}

\psarc[linecolor=red](0,0){3}{-45}{135}
\psline[linecolor=red](3.5,-3.5)(2.1213,-2.1213)
\psline[linecolor=red]{->}(3.5,-3.5)(2.7,-2.7)
\psline[linecolor=red](-3.5,3.5)(-2.1213,2.1213)
\psline[linecolor=red]{->}(-2.1213,2.1213)(-3,3)

\rput(3.5,-1){$L_1$}
\rput(-2.5,-1.9){radius $1/100$}

\psdot(0,0)

\rput(0.2,0.5){$\beta$}
\rput(-0.5,-0.7){$\lfloor \beta \rfloor$}


\end{pspicture}
\caption{Adjusting the contour of integration $L_1$ near $\lfloor \beta \rfloor$ in Case II}
\label{cfig}
\end{figure}
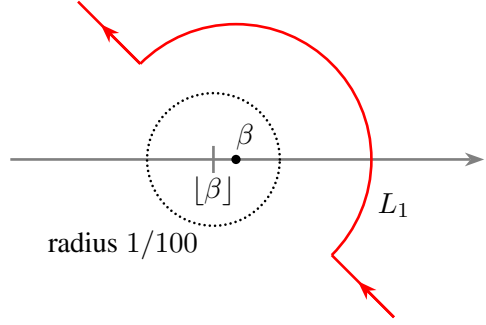

Since the circle of radius $1/100$ about $\lfloor \beta \rfloor$ is contained in the circle of radius $1/50$ about $\beta$  we see that $z\in L_1$ satisfies Lemma \ref{lemb} with $\delta =1/100$. Therefore $(e^{2\pi i z}-1)^{-1}$  is absolutely bounded and the work of Case I shows the correct bound for the part of $J_n$ given by the integral on the straight lines.

Let $J_n(s;\alpha,\beta)_{\mathcal A}$ be the remaining part of $J_n$ given by the integral over the arc:
\begin{equation} \label{jnw}
  J_n(s;\alpha,\beta)_{\mathcal A} =\int_{\mathcal A} \frac{\exp\left(   -\pi i \lambda^2 w^2+ 2\pi i a w\right)}{e^{2\pi i (\beta+w)}-1}r_n\left(\sqrt{2\pi} \lambda w,s\right)\, dw
\end{equation}
where $\mathcal A$ is given by $w$ with $|w|=1/50$ and $-\pi/4 \lqs \arg w \lqs 3\pi/4$.
The integrand is bounded by a constant times
\begin{equation*}
  \exp\left(   \pi  \lambda^2 /50^2+ 2\pi /50\right) \left|r_n\left(\sqrt{2\pi} \lambda w,s\right)\right|
\end{equation*}
Using \e{25} we have
\begin{equation} \label{hro}
  J_n(s;\alpha,\beta)_{\mathcal A}  \ll  \int_{\mathcal A} \exp\left( 14\left| \sqrt{2\pi}\lambda w\right|^2 /29\right) \, |dw|  =O(1)
\end{equation}
 with an implied constant depending on $n$ and  $\lambda$. If $n=0$ then \e{hro} gives the correct bound for \e{jt}.

Now we fix $n \in \Z_{\gqs 1}$.
Assume $t\gqs 50n/27$ so that, by \e{24},
\begin{equation*} 
  J_n(s;\alpha,\beta)_{\mathcal A}  \ll  \int_{\mathcal A} \left|r_n\left(\sqrt{2\pi} \lambda w,s\right)\right| \, |dw| \ll \left(\frac{\sqrt{2\pi} \lambda}{50}\right)^n t^{-n/6}  =O\left(t^{-n/6}\right)
\end{equation*}
provided that $\sqrt{2\pi} \lambda/50 \lqs \mu$. (Recall $\mu$ defined before \e{tro2}.) But this last condition is true if $t>1.4\times 10^{-7}$. Hence
\begin{equation}\label{hro3}
  J_n(s;\alpha,\beta)_{\mathcal A} =O\left(t^{-n/6}\right)  \qquad \text{for} \qquad t\gqs 50n/27
\end{equation}
and by \e{hro}
\begin{equation}\label{hro4}
  J_n(s;\alpha,\beta)_{\mathcal A} =O\left(1\right)  \qquad \text{for} \qquad 0<t< 50n/27.
\end{equation}
The estimates \e{hro3} and \e{hro4} complete the proof of \e{jt} in this case.

\vskip 3mm
\noindent
{\bf Case III:} $\lambda \gqs 1$ and $1/(100\lambda) \lqs b \lqs 99/(100\lambda)$. This is similar to Case I, though we are more careful in showing the $\lambda$ dependence. The integration path $L_1$ is straight with no adjustments. For $z\in L_1$, the part of the integrand $(e^{2\pi i z}-1)^{-1}$ in \e{jn} is  bounded by an absolute constant times $1+\lambda$ as we may apply Lemma \ref{lemb} with $\delta=1/(100\sqrt{2}\lambda)$.

As in \e{fgh} and \e{alex}, it may be seen that $J_n(s;\alpha,\beta)$ is bounded by an absolute constant times
\begin{equation} \label{frt}
   \frac{1+\lambda}{\lambda}\int_{0}^{\sqrt{t}/2} \exp\left(-v^2/2 +\sqrt{2\pi} |v|/\lambda \right) \left| r_n\left( \varepsilon v,s \right)\right|\, dv.
\end{equation}
As \e{frt} is decreasing in $\lambda$, we may take $\lambda=1$ in our bounds and the error will not depend on $\lambda$. The arguments of Case I now go through unchanged.

\vskip 3mm
\noindent
{\bf Case IV:} $\lambda \gqs 1$ and $0\lqs b \lqs 1/(100\lambda)$ or $99/(100\lambda) \lqs b <1$. Similarly to Case II, $L_1$ is the usual path of integration except that we replace the segment between $\beta-\varepsilon/(50\lambda)$ and  $\beta+\varepsilon/(50\lambda)$ with a semicircular arc of radius $1/(50\lambda)$ about $\beta$. As in Case II, we may focus on the situation with $0\lqs b \lqs 1/(100\lambda)$.

Since the circle of radius $1/(100\lambda)$ about $\lfloor \beta \rfloor$ is contained in the circle of radius $1/(50\lambda)$ about $\beta$  we see that $z\in L_1$ satisfies Lemma \ref{lemb} with $\delta =1/(100\lambda)$. Therefore $(e^{2\pi i z}-1)^{-1}$  is  bounded by an absolute constant times $1+\lambda$ and the work of Case III shows the correct bound for the part of $J_n$ given by the integral on the straight lines. Let $J_n(s;\alpha,\beta)_{\mathcal A}$ be the remaining part of $J_n$ given by the integral over the arc:
\begin{equation} \label{jnw2}
  J_n(s;\alpha,\beta)_{\mathcal A} =\frac 1\lambda\int_{\mathcal A} \frac{\exp\left(   -\pi i  w^2+ 2\pi i a w/\lambda\right)}{e^{2\pi i (\beta+w/\lambda)}-1}r_n\left(\sqrt{2\pi} w,s\right)\, dw
\end{equation}
where $\mathcal A$ was already defined for \e{jnw} and given by $w$ with $|w|=1/50$ and $-\pi/4 \lqs \arg w \lqs 3\pi/4$. Hence, $J_n(s;\alpha,\beta)_{\mathcal A}$ is bounded by an absolute constant times
\begin{equation} \label{jnw3}
 \frac{1+\lambda}\lambda \int_{\mathcal A} \exp\left(   \frac{\pi}{50^2}+ \frac{2\pi}{50\lambda}\right) \left|r_n\left(\sqrt{2\pi} w,s\right)\right|\, dw.
\end{equation}
Then \e{jnw3} is decreasing in $\lambda$ and so we may reuse the estimates of Case II with $\lambda=1$ to bound $J_n(s;\alpha,\beta)_{\mathcal A}$ as in \e{hro3} and \e{hro4}.
\end{proof}

With \e{il1}, \e{jn} and Proposition \ref{longj}, we have shown that
\begin{multline} \label{inn}
  \int_{L_1} g(z)\, dz
  = \beta^{s-1}e^{-2\pi i m\beta} \sum_{k=0}^{N-1} a_k(s)(2\pi)^{k/2}\lambda^k \\
\times
   \int_{L_1}  \frac{\exp\left(   -\pi i \lambda^2 (z-\beta)^2+ 2\pi i a(z-\beta)\right)}{e^{2\pi i z}-1} (z-\beta)^k \, dz
 +O\left( \beta^{\sigma-1}t^{-N/6}\right)
\end{multline}
for $\sigma \in I$, $t>0$ and $\lambda>0$, with an implied constant depending only on $I$, $N\in \Z_{\gqs 0}$ and $\lambda$ when $\lambda<1$.

The last step in rearranging our expressions for $\int_{L_1} g(z)\, dz$ is to extend the line of integration on the right of \e{inn} to infinity in both directions. Let $L^-$ be the line from $\beta-\varepsilon \infty$ to $\beta-\varepsilon \beta/2$ and let $L^+$ be the line from $\beta+\varepsilon \beta/2$ to $\beta+\varepsilon \infty$.

\begin{lemma} \label{inft}
For $\sigma \in I$, $t\gqs 1$ and $N\in \Z_{\gqs 0}$ we have
\begin{equation} \label{jlem}
  \sum_{k=0}^{N-1} a_k(s) (2\pi)^{k/2}\lambda^k
   \int_{L^- \cup L^+}  \frac{\exp\left(   -\pi i \lambda^2 (z-\beta)^2+ 2\pi i a(z-\beta)\right)}{e^{2\pi i z}-1} (z-\beta)^k \, dz  = O\left( e^{-t/16}\right).
\end{equation}
The implied  constant depends only on $I$, $N$ and $\lambda$. If $\lambda \gqs 1$ then the implied  constant is independent of $\lambda$.
\end{lemma}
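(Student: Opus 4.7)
The plan is to parametrize $L^\pm$ explicitly and pull out the Gaussian factor that produces the required $e^{-t/16}$ decay. On $L^+$ write $z = \beta + \ee u$ with $u \in [\beta/2, \infty)$, and on $L^-$ write $z = \beta - \ee u$ with the same range of $u$. Because $\ee^2 = e^{3\pi i/2} = -i$, the quadratic exponent in \e{jlem} collapses on both paths to
\begin{equation*}
-\pi i \lambda^2 (z-\beta)^2 = -\pi \lambda^2 u^2,
\end{equation*}
giving real Gaussian decay in $u$.

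Next I would check that the remaining factors contribute only $O(u^k)$ uniformly. On $L^+$ this is immediate: $\Im(z) \gqs \beta/(2\sqrt 2) \gqs 1/(2\sqrt 2)$ makes $(e^{2\pi i z}-1)^{-1}$ absolutely bounded, and $|\exp(2\pi i a(z-\beta))| \lqs 1$. On $L^-$ the denominator is of size $e^{-\sqrt 2\,\pi u}$, whereas $|\exp(2\pi i a(z-\beta))| = e^{\sqrt 2\,\pi a u}$; since $0 \lqs a < 1$ their product stays bounded. For the coefficient $a_k(s)$, Cauchy's integral formula applied to the series \e{wzs} on a circle of radius $\min(1, \sqrt t/2)$, together with the bound $|w(u,s)| \ll e^{14|u|^2/29}$ from \e{25}, yields $|a_k(s)| = O_{k,I}(1)$ uniformly for $\sigma \in I$ and $t \gqs 1$.

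Putting this together, the $k$th summand in \e{jlem} is bounded by a constant depending only on $k$ and $I$ times
\begin{equation*}
(2\pi)^{k/2} \lambda^k \int_{\beta/2}^\infty u^k e^{-\pi \lambda^2 u^2}\, du.
\end{equation*}
The key step is to split $e^{-\pi \lambda^2 u^2} = e^{-\pi \lambda^2 u^2/2}\cdot e^{-\pi \lambda^2 u^2/2}$ and bound the second factor by $e^{-\pi \lambda^2 \beta^2/8}$ on the range $u \gqs \beta/2$; since $\lambda^2 \beta^2 = \alpha\beta = t/(2\pi)$, this is precisely $e^{-t/16}$. The residual integral $\int_0^\infty u^k e^{-\pi \lambda^2 u^2/2}\, du$ equals a constant times $\lambda^{-k-1}$, so each summand contributes $O(e^{-t/16}\lambda^{-1})$ and the sum over $k<N$ yields $O_N(e^{-t/16}\lambda^{-1})$. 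For $\lambda \gqs 1$ the factor $\lambda^{-1} \lqs 1$ is absorbed into the implied constant; for $\lambda < 1$ it contributes the $\lambda$-dependence stated in the lemma.

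I expect no serious obstacle; the only delicate point is the balance on $L^-$, where the individually exponential pieces $(e^{2\pi i z}-1)^{-1}$ and $\exp(2\pi i a(z-\beta))$ must be estimated together, and their product is bounded precisely because $0 \lqs a < 1$.
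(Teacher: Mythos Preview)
Your proof is correct and follows essentially the same approach as the paper: parametrize $L^\pm$ along the $\varepsilon$-direction, exploit $\varepsilon^2=-i$ to get real Gaussian decay $e^{-\pi\lambda^2 u^2}$, bound $a_k(s)=O_{k,I}(1)$ via \e{25}, and extract $e^{-t/16}$ from the Gaussian tail starting at $u=\beta/2$ (equivalently $v=\sqrt{t}/2$ in the paper's scaled variable). The only stylistic difference is that the paper bounds $(e^{2\pi i z}-1)^{-1}$ uniformly on both tails by $O(1+\lambda)$ via Lemma~\ref{lemb} and handles the linear exponential crudely, whereas your pairing of the denominator with $\exp(2\pi i a(z-\beta))$ on $L^-$ is slightly cleaner and gives the same conclusion.
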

\begin{proof}
We first note that for $z \in L^- \cup L^+$ it is true that
\begin{equation*}
  |\Im z| \gqs \frac{\beta}{2\sqrt 2} = \frac{\sqrt t}{2\sqrt 2 \cdot \sqrt{2\pi} \lambda} \gqs \frac{1}{4\sqrt{\pi}\lambda}.
\end{equation*}
Hence by Lemma \ref{lemb}, $(e^{2\pi i z}-1)^{-1} = O\left(1+\lambda\right)$ for an absolute implied constant. Next we see by using \e{25} with $z=1/2$ that
\begin{equation} \label{aksbnd}
  a_k(s) = \left(r_k(z,s) -  r_{k+1}(z,s)\right) z^{-k} =O(1)
\end{equation}
(where this implied constant depends on $k$ and $I$).

With the change of variables $z=\beta+\varepsilon v/(\sqrt{2\pi} \lambda)$ that we used in \e{fgh}, the left side of \e{jlem} equals
\begin{equation*}
  \sum_{k=0}^{N-1} a_k(s) \frac{\varepsilon}{\sqrt{2\pi}\lambda}
   \int_{(-\infty,-\sqrt{t}/2]\cup [\sqrt{t}/2,\infty)}  \frac{\exp\left(-v^2/2 +\sqrt{2\pi}  i \varepsilon a v/\lambda \right)}{e^{2\pi i(\beta+\varepsilon v/(\sqrt{2\pi} \lambda))}-1} (\varepsilon v)^k \, dv
\end{equation*}
and this is bounded by an absolute constant times
\begin{equation} \label{port}
  \sum_{k=0}^{N-1} |a_k(s)| \frac{1+\lambda}{\lambda}
   \int_{\sqrt{t}/2}^{\infty} \exp\left(-v^2/2 +\sqrt{2\pi}   v/\lambda \right) v^k \, dv.
\end{equation}
Then \e{port}  is
\begin{equation*}
  \ll \int_{\sqrt{t}/2}^{\infty} \exp\left(-v^2/3 \right) \, dv \ll \exp\left(-(\sqrt{t}/2)^2/4 \right)
\end{equation*}
as required. The implied constant in \e{jlem} is independent of $\lambda$ for $\lambda\gqs 1$ because \e{port} is decreasing in $\lambda$.
\end{proof}

\subsection{Relating the integral to $G(u;\tau)$}
\begin{proof}[Proof of Theorem \ref{hat}]
It follows from Lemma \ref{inft} that \e{inn} is true with the path of integration $L_1$ extended to infinity.
If we replace $z$ by $z+\lfloor \beta \rfloor$ in the integral in \e{inn} then it is easy to see that the path of integration may now be taken as any infinite straight line  crossing the real line in the interval $(0,1)$ and in the direction of $\varepsilon$. As before, we use $0\nwarrow 1$ to denote this path.

Then combining this with \e{13} and Proposition \ref{cgz} gives
\begin{multline} \label{130}
  \zeta(s) = \sum_{n\lqs \alpha} n^{-s}+\chi(s) \sum_{n\lqs \beta} n^{s-1}+\frac {(2\pi)^s e^{\pi i s/2}}{\G(s)(e^{2\pi i s}-1)} \beta^{s-1}e^{-2\pi i m\beta} \sum_{k=0}^{N-1} a_k(s)(2\pi)^{k/2}\lambda^k \\
\times
   \int_{0 \nwarrow 1}  \frac{\exp\left(   -\pi i \lambda^2 (z-b)^2+ 2\pi i a(z-b)\right)}{e^{2\pi i z}-1} (z-b)^k \, dz
 +O\left( \frac{\beta^{\sigma-1}t^{1/2-\sigma}}{t^{N/6}}\right).
\end{multline}
For the error, we used that
\begin{equation*}
  \frac {(2\pi)^s e^{\pi i s/2}}{\G(s)(e^{2\pi i s}-1)} =O\left( t^{1/2-\sigma}\right)
\end{equation*}
for $\sigma \in I$ and $t\gqs 2\pi$ by Stirling's formula or  Proposition \ref{stir2}. Writing the error in \e{130} in terms of $\lambda$ gives the error stated in \e{fab}.

The integral in \e{130} may be expressed in terms of $G(u;\tau)$. It is simpler to relate the integral directly to the Mordell integral $\Upsilon(u;\tau)$ in \e{mord}, but this would obscure the symmetry in $s \leftrightarrow 1-s$, $\alpha \leftrightarrow \beta$ we wish to exploit.
The integral in \e{130}  is $k!(2\pi i)^{-k}$ times the coefficient of $u^k$ in
\begin{multline}\label{pos}
   \int_{0 \nwarrow 1}  \frac{\exp\left(   -\pi i \lambda^2 (z-b)^2+ 2\pi i a(z-b)\right)}{e^{2\pi i z}-1}\exp\bigl(2\pi i(z-b)u\bigr) \, dz \\
  = \lambda^{-1/2}\exp\left(-\pi i \left(b^2 \lambda^2  + \frac 18 \right)\right)
\exp\left( \frac{\pi i }{2}\left(\frac{a}{\lambda}-b \lambda +\frac{u}{\lambda}\right)^2\right)
  G\left(\frac{a}{\lambda}+ b \lambda  +\frac{u}{\lambda}; \lambda^2\right).
\end{multline}
It follows from the right identity in \e{behe} that
\begin{equation*}
  e^{-c^2(u+q)^2} = e^{-c^2 q^2}\sum_{n=0}^\infty H_n(cq) \frac{(-c)^n u^n}{n!}.
\end{equation*}
Expanding the exponential factor on the right of \e{pos} using this,
with $c=e^{-\pi i/4} \sqrt{ \pi /2}$ and $q=a/\lambda-b \lambda$, produces
\begin{equation*}
\exp\left( \frac{\pi i }{2}\left(\frac{a}{\lambda}-b \lambda +\frac{u}{\lambda}\right)^2\right)
 = \exp\left( \frac{\pi i}{2}(a \lambda^{-1}-b \lambda)^2\right) \sum_{n=0}^\infty H_n \left( \omega_\lambda \right) \left(\frac{\pi  }{2} \right)^{n/2}\left(\frac{\varepsilon}{\lambda} \right)^{n}\frac{ u^n}{n!}.
\end{equation*}
Combining this with the Taylor expansion of $G\left(a/\lambda+ b \lambda  +u/\lambda; \lambda^2\right)$ shows the integral in \e{130} equals
\begin{multline} \label{for}
  \frac 1{(2\pi i)^k} \lambda^{-1/2-k}
\exp\left(-\pi i \left(b^2 \lambda^2  + \frac 18 \right)\right)
 \exp\left( \frac{\pi i}{2}(a \lambda^{-1}-b \lambda)^2\right)\\
\times\sum_{r=0}^k \binom{k}{r} G^{(r)}\left(a \lambda^{-1}+b \lambda; \lambda^2\right)
  H_{k-r} \left( \omega_\lambda  \right)\left(\frac{\pi  }{2} \right)^{(k-r)/2}\varepsilon^{k-r}.
\end{multline}
Put \e{for}  into \e{130} and the final step, to get the formula into the form we want, uses the identities
\begin{equation}\label{bilb}
  \frac{\beta^{s-1}}{\lambda^{1/2}} = \left(\frac{2\pi}t\right)^{1/4} \exp\left( \left( \frac s2-\frac 14\right) \log \frac t{2\pi}\right)\lambda^{1/2-s}
\end{equation}
and
\begin{multline} \label{for2}
  \exp\left(-2\pi i m \beta -\pi i \left(b^2 \lambda^2  + \frac 18 \right)\right)
 \exp\left( \frac{\pi i}{2}(a \lambda^{-1}-b \lambda)^2\right)
 \\
 =(-1)^{\lfloor \alpha \rfloor \lfloor \beta \rfloor}
       \exp\left( \frac{\pi i}{2} \Bigl[2a\beta-2b\alpha+ a^2 \lambda^{-2}  - b^2 \lambda^2\Bigr]\right)
\exp\left(  -\frac{i t}2 -\frac{i \pi}8\right).
\end{multline}
Equation \e{bilb} follows from $\beta^2=t/(2\pi \lambda^2)$. Also $m = \lfloor \alpha \rfloor$,
 $\pi i\alpha \beta = i t/2$
and the equalities
\begin{align*}
  1  =e^{2\pi i \lfloor \alpha \rfloor \lfloor \beta \rfloor} & = e^{2\pi i(\alpha -a)(\beta-b)} =  e^{2\pi i(\alpha \beta -\alpha b -a\beta+ab)},\\
(-1)^{\lfloor \alpha \rfloor \lfloor \beta \rfloor} & = e^{\pi i(\alpha - a)(\beta - b)} = e^{\pi i(\alpha \beta -\alpha b -a\beta+ab)}
\end{align*}
show \e{for2}. This completes the proof  of Theorem \ref{hat}.
\end{proof}

\section{The Mordell integral $\Upsilon(u;\tau)$} \label{dell}

\subsection{Relations} \label{aup}
For $u,\tau \in \C$ with $\Re(\tau)>0$, recall our definition from \e{mord}
\begin{equation} \label{morddd}
  \Upsilon(u;\tau):=\int_{0 \nwarrow 1} \frac{e^{-\pi i \tau z^2 +2\pi i u z}}{e^{2\pi i z}-1} \, dz.
\end{equation}
This  type of integral was studied in detail by  Mordell \cite{Mo33} with earlier work by Kronecker, Lerch, Ramanujan and Riemann; see the references in the introduction of \cite{CR15}. The method of Riemann for $\tau=1$, described in \cite[Sect. 1]{Si32}, easily extends to give
\begin{align}
  \Upsilon(u+1;\tau) & = \Upsilon(u;\tau) +\tau^{-1/2} e^{\pi i(u^2/\tau+3/4)}, \label{up1}\\
  \Upsilon(u+\tau;\tau) & = e^{\pi i(\tau+2u)}\left(\Upsilon(u;\tau) -1 \right).\label{upt}
\end{align}
Applying these relations repeatedly yields the following result.
\begin{prop}
Suppose $u,\tau \in \C$ with $\Re(\tau)>0$. Then for all $m,n \in \Z_{\gqs 0}$
\begin{align}\label{psi}
  \Upsilon(u+m;\tau) &  = \Upsilon(u;\tau) +e^{3\pi i/4} \tau^{-1/2}  \sum_{j=0}^{m-1} e^{\pi i (j+u)^2/\tau},  \\
  \Upsilon(u+n\tau;\tau) & = e^{\pi i n(n\tau+2u)}\Upsilon(u;\tau) - e^{\pi i (n \tau+u)^2/\tau}
  \sum_{j=0}^{n-1} e^{-\pi i (j \tau+u)^2/\tau}. \label{psi2}
\end{align}
\end{prop}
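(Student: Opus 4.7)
The plan is to prove both identities by induction on the integer parameter, using the two base transformation formulas \e{up1} and \e{upt} as the inductive step. Both identities reduce to these formulas when $m=1$ or $n=1$ respectively, and are trivial when $m=0$ or $n=0$ (the product factor is $1$ and the sum is empty).

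For \e{psi}, I would induct on $m$. Assuming the formula holds for $m$, I would write $\Upsilon(u+m+1;\tau)=\Upsilon((u+m)+1;\tau)$ and apply \e{up1} with $u$ replaced by $u+m$, which produces the inductive hypothesis plus an additional summand $\tau^{-1/2}e^{\pi i((u+m)^2/\tau+3/4)}$. This is precisely the $j=m$ term missing from the sum, so combining yields the formula for $m+1$. The calculation is purely algebraic with no subtlety; the only thing to notice is that $e^{3\pi i/4}$ factors out uniformly.

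For \e{psi2}, I would again induct on $n$, applying \e{upt} with $u$ replaced by $u+n\tau$ to pass from $n$ to $n+1$. The key book-keeping step is showing that the multiplicative prefactor satisfies
\[
  e^{\pi i(\tau+2(u+n\tau))}\cdot e^{\pi i n(n\tau+2u)} = e^{\pi i(n+1)((n+1)\tau+2u)},
\]
which is straightforward exponent arithmetic. Next I need to absorb the constant $-e^{\pi i(\tau+2u+2n\tau)}$ arising from the ``$-1$'' in \e{upt} as the new $j=n$ term in the sum. For this I verify the telescoping identity
\[
  e^{\pi i((n+1)\tau+u)^2/\tau}\cdot e^{-\pi i(n\tau+u)^2/\tau} = e^{\pi i((2n+1)\tau+2u)},
\]
which shows that pulling out the updated prefactor $e^{\pi i((n+1)\tau+u)^2/\tau}$ from all terms (including the new constant) produces exactly $\sum_{j=0}^{n}e^{-\pi i(j\tau+u)^2/\tau}$.

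There is no genuine obstacle here: the entire argument is a direct iteration of \e{up1} and \e{upt}, and the only care required is tracking the quadratic exponents in $\tau$ and $u$ through the recursion for \e{psi2}. Because both identities are purely algebraic consequences of the one-step transformations, the proof is short enough that writing it out explicitly is essentially the same as sketching it.
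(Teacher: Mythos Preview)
Your proposal is correct and is exactly the approach the paper takes: it states that the proposition follows by ``applying these relations repeatedly,'' meaning iterating \e{up1} and \e{upt}. Your exponent bookkeeping for the inductive step in \e{psi2} is accurate, so there is nothing to add.
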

This allows us to compute $\Upsilon(u;\tau)$ explicitly for $\tau$ a positive rational. If $n\tau=m$ for $m,n \in \Z_{\gqs 1}$ then equating \e{psi} and \e{psi2} shows, as in \cite[Sect. 1]{De67},
\begin{equation} \label{rat}
  \left( e^{\pi i n(m+2u)}-1\right) \Upsilon\left(u;\frac mn\right) = e^{3\pi i/4} \frac{\sqrt{n}}{\sqrt{m}}\sum_{j=0}^{m-1} e^{\pi i (j+u)^2n/m}+ e^{\pi i (m +u)^2 n/m}
  \sum_{j=0}^{n-1} e^{-\pi i (j +n u/m)^2 m/n}.
\end{equation}
The right side of \e{rat} is left essentially unchanged if $m$ and $n$ are interchanged, $u$ is replaced by $n u/m$, and everything is conjugated. Precisely, we have
\begin{equation*}
  \overline{\left( e^{\pi i m(n+2n \overline{u}/m)}-1\right) \Upsilon\left(\frac{n\overline{u}}m;\frac nm\right)} =  e^{-3\pi i/4} \frac{\sqrt{m}}{\sqrt{n}} e^{-\pi i (m +u)^2 n/m} \left( e^{\pi i n(m+2u)}-1\right) \Upsilon\left(u;\frac mn\right).
\end{equation*}
Simplifying this shows
\begin{align}
  \overline{ \Upsilon\left(\frac{n\overline{u}}m;\frac nm\right)} & = -e^{-3\pi i/4} \frac{\sqrt{m}}{\sqrt{n}} e^{-\pi i (m +u)^2 n/m} e^{\pi i n(m+2u)} \Upsilon\left(u;\frac mn\right) \notag\\
  & = \frac{\sqrt{m}}{\sqrt{n}} e^{-\pi i (nu^2/m-1/4)} \Upsilon\left(u;\frac mn\right). \label{silon}
\end{align}
As in \e{fdef},  set
\begin{equation} \label{gdef}
  G(u;\tau) := \tau^{1/4}\exp\left( -\frac{\pi i u^2}{2}+\frac{\pi i}{8}\right)  \Upsilon \left(\sqrt{\tau} \cdot u;\tau \right).
\end{equation}
This definition gives the simplest possible transformation under $\tau \to 1/\tau$, as we see next.

\begin{prop} \label{fsym}
For all $u,\tau \in \C$ with $\Re(\tau)>0$ we have
\begin{align} \label{uwp}
  \Upsilon( u/\tau; 1/\tau  ) & = \sqrt{\tau} e^{\pi i(u^2/\tau-1/4)} \overline{ \Upsilon (\overline{u};\overline{\tau} )}, \\
  G(u ;1/\tau  ) & =\overline{G(\overline{u} ; \overline{\tau})}, \label{uwp2}\\
G^{(k)}(u ; 1/\tau ) & = \overline{G^{(k)}(\overline{u} ; \overline{\tau})}, \qquad (k \in \Z_{\gqs 0}). \label{fft}
\end{align}
\end{prop}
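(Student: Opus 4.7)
The plan is to derive \e{uwp} by manipulating the rational-$\tau$ identity \e{silon} and then extending it to all $\tau$ in the right half-plane by analytic continuation; the identities \e{uwp2} and \e{fft} will follow by direct substitution and differentiation, respectively.

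For \e{uwp}, I first observe that when $\tau = m/n \in \Q_{>0}$ we have $\overline{\tau} = \tau$, so \e{silon} rewrites as
\[
\overline{\Upsilon(\overline{u}/\tau;\, 1/\tau)} = \sqrt{\tau}\, e^{-\pi i(u^2/\tau - 1/4)}\, \Upsilon(u;\tau).
\]
Taking complex conjugates of both sides and then relabeling $u \to \overline{u}$ produces \e{uwp} for $\tau \in \Q_{>0}$ and arbitrary $u \in \C$. To extend this to all $\tau$ with $\Re(\tau) > 0$, I fix $u$ and note that both sides of \e{uwp} are holomorphic in $\tau$ on the right half-plane: the left side transparently, and the right side because $\tau \mapsto \overline{\Upsilon(\overline{u}; \overline{\tau})}$ is the Schwarz reflection of the holomorphic function $w \mapsto \Upsilon(\overline{u}; w)$. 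Since $\Q_{>0}$ has accumulation points in the right half-plane, analytic continuation finishes the proof of \e{uwp}.

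To deduce \e{uwp2}, I substitute $u \mapsto u\sqrt{\tau}$ in \e{uwp}, so that $(u\sqrt{\tau})/\tau = \sqrt{1/\tau}\,u$ and the exponent $u^2\tau/\tau$ simplifies to $u^2$. Feeding the resulting expression for $\Upsilon(\sqrt{1/\tau}\,u;\,1/\tau)$ into the definition \e{gdef} of $G(u;1/\tau)$, the prefactors combine via $(1/\tau)^{1/4}\sqrt{\tau} = \tau^{1/4}$ and the exponential factors collapse to $e^{\pi i u^2/2 - \pi i/8}$; the result matches the direct expansion of $\overline{G(\overline{u}; \overline{\tau})}$ obtained by conjugating \e{gdef}. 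For \e{fft} I then differentiate \e{uwp2} $k$ times in $u$: the right side $\overline{G(\overline{u}; \overline{\tau})}$ is holomorphic in $u$ by Schwarz reflection of $v \mapsto G(v; \overline{\tau})$, and its $k$th $u$-derivative equals $\overline{G^{(k)}(\overline{u}; \overline{\tau})}$.

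The proof is essentially routine once one recognizes that \e{silon} together with Schwarz reflection does all the work; the only real pitfall is bookkeeping, namely keeping consistent principal branches for $\sqrt{\tau}$ and $\tau^{1/4}$ on $\C \setminus (-\infty, 0]$, so that conjugation commutes with these roots on the right half-plane and the various exponential prefactors telescope as claimed.
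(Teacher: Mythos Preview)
Your proof is correct and follows essentially the same route as the paper's own argument: derive \e{uwp} from \e{silon} on $\Q_{>0}$, extend by analytic continuation in $\tau$ (noting via Schwarz reflection that both sides are holomorphic on the right half-plane), then obtain \e{uwp2} by substitution into \e{gdef} and \e{fft} by differentiation. You have simply filled in the details that the paper leaves to the reader.
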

\begin{proof}
We obtain \e{uwp} from \e{silon} for all $u\in \C$ and all $\tau \in \Q_{>0}$. Since both sides of \e{uwp} are holomorphic functions of $\tau$ for  $\Re(\tau)>0$, it follows that \e{uwp} extends to all these values of $\tau$. Then \e{uwp2} follows directly from \e{uwp}. Differentiating \e{uwp2} with respect to $u$ provides \e{fft}.
\end{proof}

\subsection{Examples}
Let
\begin{equation} \label{thek}
  \theta_{k}(u):=u^2/2 -\sqrt{k} u-k/2-1/8.
\end{equation}
For all $m,n \in \Z_{\gqs 1}$ we have from \e{rat} and \e{gdef} that
\begin{multline} \label{ggg}
  G\left(u;\frac mn\right) = \frac{1}{2i \sin(\pi (\sqrt{mn} u +mn/2))} \\
\times\left[
\left(\frac mn\right)^{1/4}  \exp\Bigl(-\pi i \theta_{mn}(u)\Bigr) \sum_{j=0}^{n-1} \exp\left(-\pi i j\left[2u\sqrt{\frac mn}+j \frac mn\right]\right)
\right.\\
\left.- \left(\frac mn\right)^{-1/4}  \exp\Bigl(\pi i \theta_{mn}(u)\Bigr) \sum_{j=0}^{m-1} \exp\left(\pi i j\left[2u\sqrt{\frac nm}+j \frac nm\right]\right) \right].
\end{multline}
If $u$ makes $\sqrt{mn} u +mn/2$ an integer, then the denominator $\sin(\pi (\sqrt{mn} u +mn/2))$ is zero. Since $G(u;m/n)$ is a holomorphic function of $u$, it follows that the numerator in \e{ggg} must also be zero. For these values of $u$, $G(u;m/n)$ may be found by taking limits. The numerator being zero in these cases also gives instances of Gauss sum reciprocity as mentioned by Siegel in \cite{Si32} and shown in \cite{De67}.

In the simplest case of $\tau=1$  we know by  \e{uwp2} that $G(u;1)=\overline{G(\overline{u};1)}$ and so $G(u;1)$ is real-valued when $u\in \R$. Then \e{ggg} implies
\begin{equation*}
  G(u;1)=-\frac{1}{2i \sin(\pi (u+1/2))}\left(e^{\pi i \theta_1(u)}- e^{-\pi i \theta_1(u)}\right) = -\frac{\sin(\pi \theta_1(u) )}{\sin(\pi (u+1/2))}.
\end{equation*}
This may also be written as
\begin{equation} \label{gu1}
  G(u;1)= -\frac{\sin(\pi (u^2/2-u-5/8) )}{\sin(\pi (u+1/2))} = \frac{\cos(\pi (u^2/2-u-1/8 ))}{\cos(\pi u)}.
\end{equation}
For $\tau=2,$ $3$ we find
\begin{align}
  G(u;2) & =\frac{-1}{2i \sin(\sqrt{2}\pi u)}\left[2^{1/4}e^{-\pi i \theta_2(u)} - 2^{-1/4}e^{\pi i \theta_2(u)}\left(1+i e^{\sqrt{2}\pi i u}\right) \right], \label{gu2}\\
G(u;3) & =\frac{-1}{2i \cos(\sqrt{3}\pi u)}\left[3^{1/4}e^{-\pi i \theta_3(u)} - 3^{-1/4}e^{\pi i \theta_3(u)}\left(1+ e^{\pi i(2 u/\sqrt{3}+1/3)} + e^{\pi i(4 u/\sqrt{3}+4/3)}\right) \right]. \notag
\end{align}

\subsection{Analytic continuation and  Zwegers' $h(u;\tau)$}  \label{bup}

The results in this subsection will not be needed in the rest of the paper, though they establish an interesting connection.
In his thesis \cite{Zw02}, \cite[Chap. 8]{BFOR}, Zwegers puts the mock theta functions of Ramanujan into a modular framework. His Appell-Lerch series $\mu(z_1,z_2;\tau)$, see \cite[Sect. 1.3]{Zw02}, is a two-variable Jacobi form of weight $1/2$, except that a term containing
\begin{equation} \label{zweh}
  h(u;\tau):= \int_{-\infty}^\infty \frac{e^{\pi i \tau y^2 +2\pi u y}}{\cosh(\pi y)} \, dy \qquad \quad (u\in \C, \ \Im(\tau)>0)
\end{equation}
appears in its modular transformations.
Zwegers then shows that, by adding a non-holomorphic component, $\mu$ may be completed into a Jacobi form that transforms correctly. The mock theta functions can then be expressed in terms of $\mu$, as in \cite[Appendix A]{BFOR}.

The function $h(u;\tau)$ also  appears in \cite[Sect. 1.5]{Zw02} as the period integral of a weight $3/2$ unary theta function. Ramanujan wrote $h(u;\tau)$ in terms of a partial theta function; see \cite[Thm. 2.1]{CR15}. Thus, $h(u;\tau)$ has many interesting connections to modular and mock modular forms. We see next that $h(u;\tau)$ and $\Upsilon(u;\tau)$ are closely related.

\begin{lemma}
We have
\begin{equation} \label{uups}
  \Upsilon(u;\tau)=e^{\pi i(u-1/2-\tau/4)}\int_{-\infty}^\infty \frac{e^{\pi i \tau y^2 +\pi  y(\tau-2u+1)}}{e^{\pi y}+e^{-\pi y}} \, dy
\end{equation}
for all $u$, $\tau$ with
\begin{equation} \label{kav}
  \Re(\tau)>0, \quad \Im(\tau)>0 \quad \text{and} \quad \frac{\Re(\tau)-\Im(\tau)}2 < \Re(u) < 1+\frac{\Re(\tau)-\Im(\tau)}2.
\end{equation}
\end{lemma}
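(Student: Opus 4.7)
\medskip
\noindent
The plan is to reduce the contour integral defining $\Upsilon(u;\tau)$ to an integral on the real line by a single change of variables. The right-hand side of the target identity already has a Gaussian factor $e^{\pi i \tau y^2}$ whose modulus is $e^{-\pi \Im(\tau) y^2}$; this suggests that the substitution $z = 1/2 + iy$ should do the job, with the vertical line $\{\Re z = 1/2\}$ serving as the contour in \e{morddd}.

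First I would justify that the $0\nwarrow 1$ path in the definition of $\Upsilon(u;\tau)$ may be replaced by the vertical line $\{1/2+iy : y\in \R\}$ oriented upward. The hypothesis $\Im(\tau)>0$ guarantees that $|e^{-\pi i \tau z^2}|=\exp(-\pi \Im(\tau) y^2)e^{\pi \Re(\tau) y}e^{-\pi \Re(\tau)/4}$ decays like a Gaussian on this vertical line, so the integral converges absolutely there. Both the original tilted line and the vertical line cross $(0,1)$, and the strip between them in the upper (resp.\ lower) half-plane is pole-free, since the only poles of $1/(e^{2\pi i z}-1)$ lie on $\Z$. To close the contour at $|z-1/2|=R$ one verifies that the integrand decays on the connecting arcs: on the upper arc $\theta\in[\pi/2,3\pi/4]$ the factor $1/(e^{2\pi i z}-1)$ is bounded and the quadratic part $\Re(-\pi i\tau z^2)=\pi R^2(\Im(\tau)\cos 2\theta+\Re(\tau)\sin 2\theta)$ is strictly negative throughout, while on the lower arc $\theta\in[-\pi/2,-\pi/4]$ one has $1/(e^{2\pi i z}-1)\sim e^{-2\pi i z}$ but the same quadratic negativity dominates. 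The hypotheses $\Re(\tau),\Im(\tau)>0$ and the strip condition on $\Re(u)$ ensure that the linear-in-$z$ contributions from $e^{2\pi i u z}$ and from the lower-half-plane asymptotics of the denominator do not spoil this decay. By Cauchy's theorem the two integrals agree.

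Next I would parametrize $z=1/2+iy$, $dz=i\,dy$, and expand term by term:
\begin{align*}
-\pi i\tau z^2 &= -\pi i\tau(1/4+iy-y^2)=\pi i\tau y^2+\pi \tau y-\pi i\tau/4,\\
2\pi i u z &= \pi i u-2\pi u y,\\
e^{2\pi i z}-1 &= -(1+e^{-2\pi y})=-e^{-\pi y}\bigl(e^{\pi y}+e^{-\pi y}\bigr).
\end{align*}
Substituting these into \e{morddd}, one factor of $e^{\pi y}$ from the denominator combines with the linear exponential in the numerator, producing $\pi y(\tau-2u+1)$ in the exponent. Collecting the $y$-independent constants yields the prefactor
$-i\cdot e^{\pi i u-\pi i\tau/4}=e^{-\pi i/2}e^{\pi i u-\pi i\tau/4}=e^{\pi i(u-1/2-\tau/4)},$
which is exactly the coefficient in \e{uups}.

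The main obstacle is the contour deformation argument: one has to track the two different asymptotic regimes of $1/(e^{2\pi i z}-1)$ carefully when closing the contour, especially in the lower half-plane where the denominator grows, and this is where the precise strip condition on $\Re(u)$ enters to keep the linear exponential $e^{2\pi i(u-1)z}$ from competing with the Gaussian decay on the lower closing arc. Once the deformation is established, steps two and three are purely algebraic simplification.
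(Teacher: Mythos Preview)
Your algebraic reduction is correct, and the contour deformation via circular arcs at $|z-1/2|=R$ also works: on both the upper arc $\theta\in[\pi/2,3\pi/4]$ and the lower arc $\theta\in[-\pi/2,-\pi/4]$ the leading term $\Re(-\pi i\tau z^2)=\pi R^2(\Re(\tau)\sin 2\theta+\Im(\tau)\cos 2\theta)+O(R)$ is $\lqs -cR^2$ for some $c>0$ depending only on $\tau$, since $\sin 2\theta,\cos 2\theta\lqs 0$ on both arcs and not simultaneously zero. This Gaussian $R^2$ decay beats every linear contribution, including those from $e^{2\pi i u z}$ and from the $e^{-2\pi i z}$ asymptotic of the denominator in the lower half-plane, so the arc integrals vanish as $R\to\infty$.

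The paper takes a different route: it closes the contour with \emph{horizontal} segments at heights $\pm Y$ (running from $1/2\pm iY$ to $1/2\mp Y\pm iY$). On such a segment $z=x+iY$ one has $\Re(-\pi i\tau z^2)=\pi\Im(\tau)(x^2-Y^2)+2\pi\Re(\tau)xY$, and since $x$ ranges down to $1/2-Y$ the quadratic term $\Im(\tau)(x^2-Y^2)$ is only $\lqs \Im(\tau)(1/4-Y)$, i.e.\ the Gaussian decay degrades to linear in $Y$. At that point the linear terms from $e^{2\pi i u z}$ genuinely compete, and bounding them is exactly what forces the strip condition on $\Re(u)$.

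So your final paragraph misattributes the role of the hypothesis on $u$: with your arc closure the strip condition on $\Re(u)$ never enters, because the $R^2$ decay dominates every $O(R)$ term regardless of $u$. Your argument therefore actually proves \e{uups} for \emph{all} $u\in\C$ (when $\Re\tau,\Im\tau>0$), which is fine since both sides are entire in $u$. The restriction in \e{kav} is an artefact of the paper's choice of rectilinear closing segments, not an intrinsic feature of the identity.
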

\begin{proof}
We wish to rotate the line of integration $0 \nwarrow 1$ in \e{mord} and make it vertical, passing through $1/2$. For large $Y>0$ we replace the line of integration from $1/2$ to $1/2-Y+iY$ by the lines from $1/2$ to $1/2+iY$ and $1/2+iY$ to $1/2-Y+iY$. To bound the integral on the horizontal segment
\begin{equation*}
  \mathfrak I_Y := \int_{1/2+iY}^{1/2-Y+iY} \frac{e^{-\pi i \tau z^2 +2\pi i u z}}{e^{2\pi i z}-1} \, dz,
\end{equation*}
we let $z=x+iY$ and find
\begin{equation*}
  \mathfrak I_Y \ll \int_{1/2-Y}^{1/2} \frac{\exp\left(2\pi \left[ \Re(\tau) x Y + \Im(\tau)(x^2-Y^2)/2-\Re(u) Y -\Im(u)x \right] \right)}{e^{-2\pi Y}-1} \, dx.
\end{equation*}
For $\Im(\tau)>0$ we have $\Im(\tau)(x^2-Y^2) \lqs \Im(\tau)(1/4-Y)$, and so obtain
\begin{align}
  \mathfrak I_Y  & \ll \exp\Bigl(-\pi Y(2\Re(u)+\Im(\tau))\Bigr) \int_{1/2-Y}^{1/2} \exp\left(2\pi x\left[ \Re(\tau)  Y -\Im(u) \right] \right) \, dx \notag\\
   & \ll \exp\Bigl(-\pi Y(2\Re(u)+\Im(\tau)-\Re(\tau))\Bigr). \label{kav2}
\end{align}

The line of integration from  $1/2+Y-iY$  to $1/2$ is also replaced by horizontal and vertical lines. A similar argument shows that the horizontal integral satisfies
\begin{equation} \label{kav3}
  \mathfrak I_{-Y} := \int^{1/2-iY}_{1/2+Y-iY} \frac{e^{-\pi i \tau z^2 +2\pi i u z}}{e^{2\pi i z}-1} \, dz
\ll \exp\Bigl(-\pi Y(-2\Re(u)+2-\Im(\tau)+\Re(\tau))\Bigr).
\end{equation}
Therefore, as $Y \to \infty$, the bounds \e{kav2}, \e{kav3} imply that  $\mathfrak I_Y \to 0$ and  $\mathfrak I_{-Y} \to 0$ if the inequalities on the right of \e{kav} are satisfied. This completes the proof.
\end{proof}

Hence
\begin{equation} \label{uph}
  \Upsilon(u;\tau)= \frac 12 e^{\pi i(u-1/2-\tau/4)} h\left(\frac{\tau}{2}-u+\frac 12;\tau \right),
\end{equation}
initially for all $u$ and $\tau$ satisfying \e{kav}. By analytically continuing both sides in $u$ we see that \e{uph} becomes true for all $u\in \C$ when $\Re(\tau),$ $\Im(\tau)>0$. Therefore  \e{uups} and \e{uph} give the analytic continuation of $\Upsilon(u;\tau)$ to all $\tau$ with $\Im(\tau)>0$ (and \e{uph} gives the analytic continuation of $h(u;\tau)$ to all $\tau$ with $\Re(\tau)>0$).

 Conjugating both sides of  \e{uups}  shows that, for all $u\in \C$ and all $\tau$ with $\Im(\overline{\tau})>0$, we have
\begin{equation}\label{conju}
 \overline{\Upsilon(\overline{u};\overline{\tau})} = e^{\pi i(\tau-2u+1)}\Upsilon(u-\tau; -\tau) .
\end{equation}
Rearranging and  simplifying with \e{upt} shows, for $\Im(\tau)>0$,
\begin{equation}\label{conju2}
 \Upsilon(u;\tau) = 1- \overline{\Upsilon(\overline{u};-\overline{\tau})}.
\end{equation}
Since $\Upsilon(u;\tau)$ exists for $\Re(\tau)>0$, the relation \e{conju2} provides the continuation of   $\Upsilon(u;\tau)$ to all $\tau$ with $\Re(\tau)<0$. In this way we have extended the definition of  $\Upsilon(u;\tau)$ to all $u\in \C$ and all $\tau \in \C$ except for $\tau$ on the negative imaginary axis: $(-\infty,0]i$.
It follows that \e{conju2} is valid for all $\tau$ outside $(-\infty,0]i$. Numerically, the values of $\Upsilon(u;\tau)$ for $\tau$  on each side of $(-\infty,0]i$  do not match, so we may take it as a branch cut.
We have shown:

\begin{prop}
For each $u\in \C$, the function $\Upsilon(u;\tau)$ defined in \e{morddd} is an analytic function of $\tau$ when $\Re(\tau)>0$. With \e{zweh} and \e{uph} we obtain the analytic continuation to $\Im(\tau)>0$. Then \e{conju2} gives the continuation to  $\Re(\tau)<0$.
\end{prop}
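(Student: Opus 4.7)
The plan is to verify analyticity on each of the three regions in turn, exploiting the identities already established in the previous two subsections. On $\{\Re(\tau)>0\}$, analyticity of $\Upsilon(u;\tau)$ in $\tau$ follows directly from the defining integral \e{morddd}: I would fix any line of the form $0\nwarrow 1$ (independence of this choice is already noted after \e{mord}), parametrise by a real variable, and observe that on every compact subset of $\{\Re(\tau)>0\}\times \C$ the integrand is bounded in absolute value by an integrable function independent of $(u,\tau)$. Uniform convergence together with Morera's theorem (or differentiation under the integral sign) then yields joint holomorphicity in $(u,\tau)$, and in particular analyticity in $\tau$ for each fixed $u$.

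Next, to obtain the extension to $\Im(\tau)>0$, note that the integral \e{zweh} defining $h(u;\tau)$ converges absolutely and uniformly on compact subsets of $\C\times \{\Im(\tau)>0\}$, because $|e^{\pi i \tau y^2}|=e^{-\pi \Im(\tau) y^2}$ provides Gaussian decay. Hence $h$ is jointly holomorphic there, and so the right side of \e{uph},
\begin{equation*}
  \tfrac 12 e^{\pi i(u-1/2-\tau/4)}\, h\bigl(\tau/2 - u + 1/2;\tau\bigr),
\end{equation*}
defines a holomorphic function of $(u,\tau)$ on $\C\times \{\Im(\tau)>0\}$. On the portion of the overlap $\{\Re(\tau)>0,\Im(\tau)>0\}\times \C$ for which the strip condition \e{kav} holds, the preceding lemma identifies this expression with $\Upsilon(u;\tau)$; uniqueness of analytic continuation, applied first in $u$ with $\tau$ fixed and then in $\tau$, propagates the equality to the full overlap. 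Thus \e{uph} supplies a genuine holomorphic extension of $\Upsilon(u;\tau)$ to $\Im(\tau)>0$.

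Finally, for $\Re(\tau)<0$ I would read \e{conju2} as a definition, setting $\Upsilon(u;\tau):=1-\overline{\Upsilon(\overline u;-\overline \tau)}$. When $\Re(\tau)<0$ we have $\Re(-\overline \tau)>0$, so the right side is already defined from the first stage; since the operation $(u,\tau)\mapsto \overline{f(\overline u,\overline \tau)}$ carries holomorphic functions to holomorphic functions, the extension is holomorphic on $\{\Re(\tau)<0\}$, and it agrees with the Stage~2 extension on the overlap $\{\Im(\tau)>0,\Re(\tau)<0\}$ by \e{conju2} itself. Together the three regions cover all of $\C\setminus (-\infty,0]i$, leaving the negative imaginary axis as the branch cut. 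I do not anticipate a substantive obstacle: each identity invoked is already proved as an equality between analytic functions on an open set, so the argument is bookkeeping plus uniqueness of analytic continuation. The only delicate point worth spelling out is the joint holomorphicity of $h(u;\tau)$ and hence of the right-hand side of \e{uph}, since the statement requires analyticity in $\tau$ alone for each fixed $u\in \C$.
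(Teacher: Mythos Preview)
Your proposal is correct and follows essentially the same approach as the paper. The paper in fact presents this proposition as a summary (``We have shown:'') of the discussion immediately preceding it, and your three-stage argument---analyticity from the defining integral on $\Re(\tau)>0$, extension to $\Im(\tau)>0$ via \e{uph} and the holomorphicity of $h$, then extension to $\Re(\tau)<0$ via \e{conju2}---tracks that discussion exactly, with the added benefit that you make the convergence and holomorphicity checks explicit. One tiny slip: in your Stage~3 remark the relevant map is $(u,\tau)\mapsto \overline{f(\overline u,-\overline\tau)}$, not $\overline{f(\overline u,\overline\tau)}$, but your conclusion that it preserves holomorphicity is still correct.
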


Combining \e{conju} with \e{uwp} shows
\begin{equation*}
  e^{\pi i(\tau-2u+1)}\Upsilon(u-\tau;-\tau) = \frac 1{\sqrt{\tau}} e^{-\pi i(u^2/\tau-1/4)} \Upsilon( u/\tau; 1/\tau  ) \qquad (\Re(\tau)>0).
\end{equation*}
This is also
\begin{equation} \label{sil}
  \Upsilon( -u/\tau; -1/\tau  ) = \sqrt{-\tau} e^{-\pi i(u^2/\tau+2u+\tau-3/4)} \Upsilon(u+\tau;\tau) \qquad (\Re(\tau)<0).
\end{equation}
For $\Re(\tau)<0$ and $\Im(\tau)>0$ we have the equality of principal square roots $\sqrt{-\tau}=e^{-\pi i/4}\sqrt{-i\tau}$. Putting this into \e{sil} gives
\begin{equation} \label{sil2}
  \Upsilon( -u/\tau; -1/\tau  ) = \sqrt{-i\tau} e^{-\pi i(u^2/\tau+2u+\tau-1/2)} \Upsilon(u+\tau;\tau)
\end{equation}
which by analytic continuation in $\tau$ is valid for all $\tau$ outside of the negative imaginary axis.
Translating \e{sil2} into a relation for $h(z;\tau)$ by \e{uph}, with $z=u+\tau/2-1/2$ and using $h(-z;\tau)=h(z;\tau)$, shows
\begin{equation}\label{hhh}
  h(z/\tau;-1/\tau)= \sqrt{-i\tau} e^{-\pi i z^2/\tau}h(z;\tau)
\end{equation}
which is part $(5)$ of \cite[Prop. 1.2]{Zw02}. Therefore  we have given another proof of \e{hhh} which is proved in  \cite{Zw02} with the Fourier transform. Alternatively, starting with \e{hhh} and using \e{conju}, we may give another proof of Proposition \ref{fsym}.

Parts $(1)$ and $(2)$ of \cite[Prop. 1.2]{Zw02} are equivalent to \e{up1}, \e{upt}.
Part $(6)$ translates into the following interesting identity. For all $u\in \C$ and initially for all $\tau$ with $\Im(\tau)>0$
\begin{equation*}
  \Upsilon(u;\tau)-\Upsilon\left(u+\frac 12;\tau+1\right) = \frac{1}{\sqrt{\tau+1}} \exp\left( \pi i \left[\frac{4u^2-4u-\tau}{4(\tau+1)} \right]\right) \cdot \Upsilon\left(\frac {2u+\tau}{2(\tau+1)};\frac{\tau}{\tau+1}\right).
\end{equation*}

\subsection{Bounds for $\Upsilon$ and $G$}
The proof of our main theorem will require these next estimates.
\begin{prop} \label{upb}
For all $u,\tau \in \R$ with $0<\tau \lqs 1$ we have
\begin{equation*}
  \tau^{k/2} \Upsilon^{(k)}(u;\tau)\ll \tau^{-1/2}(1+|u|)\left(1+ \frac{1+|u|^k}{\tau^{k/2}} \right)
\end{equation*}
for an implied constant depending only on $k\in \Z_{\gqs 0}$.
\end{prop}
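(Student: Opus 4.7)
The plan is to bound $\Upsilon^{(k)}(u;\tau)$ in two stages: first use the shift identity \e{psi} to reduce to $u$ in the unit interval, then estimate the integral representation directly for such $u$.

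For the reduction step, given $u \in \R$ let $m \in \Z$ be such that $u_0 := u - m \in [0,1)$; then $|m| \lqs |u|+1$. Differentiating \e{psi} $k$ times in $u$ (or, when $m<0$, rearranging the version with $m' = -m > 0$) gives
\begin{equation*}
\Upsilon^{(k)}(u;\tau) = \Upsilon^{(k)}(u_0;\tau) \pm e^{3\pi i/4}\tau^{-1/2} \sum_j g^{(k)}(x_j),
\end{equation*}
where $g(x) := e^{\pi i x^2/\tau}$ and the sum contains $|m|$ real arguments $x_j$ with $|x_j| \lqs |u|+1$. Writing $g^{(k)}(x) = P_k(x) g(x)$, the recursion $P_{k+1}(x) = P_k'(x) + (2\pi i x/\tau) P_k(x)$ starting from $P_0 = 1$ yields by induction $|P_k(x)| \lqs C_k \tau^{-k/2} (1 + |x|/\sqrt{\tau})^k \lqs C_k (\tau^{-k/2} + |x|^k \tau^{-k})$. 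Since $|g(x)| = 1$ for real $x,\tau$, summing bounds
\begin{equation*}
\tau^{-1/2} \sum_j |g^{(k)}(x_j)| \lqs C_k (1+|u|) \tau^{-(k+1)/2} + C_k (1+|u|)^{k+1} \tau^{-(2k+1)/2}.
\end{equation*}

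For the bounded-range step, the claim is $|\Upsilon^{(k)}(u_0;\tau)| \lqs C_k \tau^{-(k+1)/2}$ uniformly for $u_0 \in [0,1)$ and $\tau \in (0,1]$. Parametrize the contour in \e{morddd} as $z = 1/2 + w e^{3\pi i/4}$, $w \in \R$. A direct computation gives
\begin{equation*}
\left| (2\pi i z)^k e^{-\pi i \tau z^2 + 2\pi i u_0 z} \right| \lqs C_k (1+|w|)^k \exp\left( -\pi \tau w^2 - \sqrt{2}\pi w (u_0 - \tau/2) \right),
\end{equation*}
and since the contour stays at distance at least $1/(2\sqrt{2})$ from every integer, $|e^{2\pi i z} - 1|^{-1} \lqs C$ for $w \gqs 0$ and $|e^{2\pi i z} - 1|^{-1} \lqs C e^{\sqrt{2}\pi w}$ for $w \lqs 0$. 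On the half $w \gqs 0$, complete the square: if $u_0 \gqs \tau/2$ the linear term in $w$ is already non-positive, while for $u_0 < \tau/2$ the maximum value of the quadratic exponent is $\pi (u_0 - \tau/2)^2/(2\tau) \lqs \pi \tau / 8$, bounded by an absolute constant. On the half $w \lqs 0$, the extra factor $e^{\sqrt{2}\pi w}$ turns the exponent into $-\pi \tau w^2 - \sqrt{2}\pi (-w)(1 + \tau/2 - u_0)$, whose linear coefficient has the right sign since $1 + \tau/2 - u_0 > 0$ for $u_0 \in [0,1)$. Each half then reduces to a standard polynomial-Gaussian integral of size $O(\tau^{-(k+1)/2})$.

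Combining the two estimates,
\begin{equation*}
|\Upsilon^{(k)}(u;\tau)| \lqs C_k (1+|u|) \tau^{-(k+1)/2} + C_k (1+|u|)^{k+1} \tau^{-(2k+1)/2}.
\end{equation*}
Multiplying by $\tau^{k/2}$ and using the elementary inequality $(1+|u|)^{k+1} \lqs 2^k (1+|u|)(1+|u|^k)$ yields the stated bound. The main technical obstacle is the uniform Gaussian estimate for $u_0 \in [0,1)$: because $u_0$ lies in a compact interval and the two halves of the rotated contour interact correctly with the asymptotic growth of $(e^{2\pi i z} - 1)^{-1}$, the prefactor picked up when completing the square stays bounded, preventing an $e^{c/\tau}$ blow-up that would be incompatible with a polynomial bound in $\tau^{-1}$.
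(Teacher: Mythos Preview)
Your proof is correct and follows essentially the same two–stage strategy as the paper: reduce to $u_0\in[0,1)$ via the shift identity \e{psi}, then bound the integral on the rotated contour through $1/2$ using the same denominator estimates (your bounds on $|e^{2\pi i z}-1|^{-1}$ coincide with the paper's \e{un}). The only cosmetic differences are that the paper expresses the derivatives of $e^{\pi i x^2/\tau}$ via the Hermite identity \e{herm} rather than your recursion for $P_k$, and in the Gaussian step it absorbs the factor $e^{-\sqrt{2}\pi u_0 t}$ into the denominator bound (so the ``middle fraction'' is uniformly $\lqs 2$) instead of completing the square as you do; both routes give the same $O(\tau^{-(k+1)/2})$ estimate for $u_0\in[0,1)$.
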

\begin{proof}
Suppose that $u=x+m$ with $0\lqs x <1$ and $m\in \Z$. If $m\gqs 0$, then differentiating \e{psi} $k$ times implies
\begin{equation*}
  \Upsilon^{(k)}(u;\tau)  = \Upsilon^{(k)}(x;\tau) +e^{3\pi i/4} \tau^{-1/2}  \sum_{j=0}^{m-1} \frac{d^k}{dx^k} e^{\pi i (j+x)^2/\tau}.
\end{equation*}
The right-hand terms may be evaluated with the identity
\begin{equation}\label{herm}
  \frac{d^k}{dx^k} e^{-c^2(x+q)^2} = \left[(-c)^k H_k(c(x+q)) \right] e^{-c^2(x+q)^2}
\end{equation}
and $c=e^{-\pi i/4}\sqrt{\pi/\tau}$, $q=j$. Hence
\begin{align}
   \Upsilon^{(k)}(u;\tau)  - \Upsilon^{(k)}(x;\tau) & = \tau^{-1/2}  \sum_{j=0}^{m-1} e^{3\pi i(k+1)/4}\left(\frac \pi \tau\right)^{k/2}  H_k\left(e^{-\pi i/4}\frac{\sqrt{\pi}}{\sqrt{\tau}}(x+j)\right) e^{\pi i (j+x)^2/\tau} \label{brex}\\
   & \ll \tau^{-k/2-1/2}  \sum_{j=0}^{m-1}\left(1+\frac{(j+1)^k}{\tau^{k/2}} \right)\notag\\
   & \ll \tau^{-k/2-1/2} (1+|u|)\left( 1+\frac{(1+|u|)^k}{\tau^{k/2}}\right). \label{morb}
\end{align}
We find the same bound when $u=x+m$ with $m\lqs 0$. This reduces the question to estimating $\Upsilon^{(k)}(u;\tau)$ for $0\lqs u<1$.

Differentiating \e{mord} inside the integral is valid and writing $z=1/2+\varepsilon t$ then shows
\begin{equation} \label{updiff}
  \Upsilon^{(k)}(u;\tau) = -\varepsilon (\pi i)^ke^{\pi i(u-\tau/4)} \int_\R \frac{\exp\left( -\pi \tau t^2+\pi i \varepsilon(2u-\tau) t\right)}{e^{2\pi i \varepsilon t}+1} (1+2\varepsilon t)^k\, dt.
\end{equation}
It is straightforward to see that
\begin{equation} \label{un}
  \frac{1}{|e^{2\pi i \varepsilon t}+1|} < \begin{cases}
  2e^{\sqrt{2}\pi t} & \text{if} \quad t\lqs 0,\\
  2 & \text{if} \quad t\gqs 0.
  \end{cases}
\end{equation}
We have
\begin{equation} \label{ann}
  \Upsilon^{(k)}(u;\tau)\ll  \int_\R \exp\left( -\pi \tau t^2 +\frac{\pi \tau t}{\sqrt 2} \right)\frac{\exp\left(-\sqrt 2 \pi u t\right)}{|e^{2\pi i \varepsilon t}+1|} (1+|t|^k) \, dt.
\end{equation}
If we now assume that $0\lqs u \lqs 1$, then the middle fraction in \e{ann} is at most $2$ by \e{un}. Changing variables we obtain
\begin{equation*}
  \Upsilon^{(k)}(u;\tau)\ll  \int_\R \exp\left( -\pi v^2 +\frac{\pi \sqrt\tau v}{\sqrt 2} \right) \left(1+\frac{|v|^k}{\tau^{k/2}}\right) \, \frac{dv}{\tau^{1/2}}
\end{equation*}
and this is $\ll \tau^{-1/2}(1+\tau^{-k/2})$ when $\tau\lqs 1$.
Using this last bound for $\Upsilon^{(k)}(x;\tau)$ in \e{morb} and simplifying completes the proof.
\end{proof}

\begin{theorem}
For all $u,\tau \in \R$ with $\tau>0$ we have
\begin{equation} \label{mar}
  G^{(k)}(u;\tau) \ll \begin{cases}
\tau^{-1/4}\left( 1+\tau^{1/2}|u|\right)\left(1+\tau^{-k/2}+ |u|^{k}\right)\phantom{\Big|} & \text{if} \quad \tau \lqs 1,\\
\tau^{1/4}\left( 1+\tau^{-1/2}|u|\right)\left(1+\tau^{k/2}+ |u|^{k}\right)\phantom{\Big|}  & \text{if} \quad \tau \gqs 1
\end{cases}
\end{equation}
for an implied constant depending only on $k\in \Z_{\gqs 0}$.
\end{theorem}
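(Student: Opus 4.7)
The plan is to first establish the estimate in the regime $\tau \leq 1$ by direct computation, and then to obtain the regime $\tau \geq 1$ for free by the symmetry $G^{(k)}(u;1/\tau)=\overline{G^{(k)}(\overline{u};\overline{\tau})}$ from \e{fft}. Indeed, if the theorem is known for $\tau \leq 1$ and we are given a real $\tau \geq 1$, then setting $\tau':=1/\tau \leq 1$ and using $|G^{(k)}(u;\tau)|=|G^{(k)}(u;\tau')|$ (valid for $u,\tau\in\R$ by \e{fft}) transforms the bound in the first case into precisely the bound claimed in the second case, since $(\tau')^{-1/4}=\tau^{1/4}$, $(\tau')^{1/2}=\tau^{-1/2}$, and $(\tau')^{-k/2}=\tau^{k/2}$. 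So only the case $\tau \leq 1$ requires real work.

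For the case $\tau \leq 1$, I would write $G(u;\tau)=\tau^{1/4}e^{\pi i/8} E(u) V(u)$ where $E(u):=\exp(-\pi i u^2/2)$ and $V(u):=\Upsilon(\sqrt{\tau} u;\tau)$, and apply the Leibniz formula:
\begin{equation*}
G^{(k)}(u;\tau) = \tau^{1/4} e^{\pi i/8} \sum_{j=0}^{k} \binom{k}{j} E^{(j)}(u) V^{(k-j)}(u).
\end{equation*}
The derivatives of $E$ are handled using the Hermite identity \e{herm} with $c=e^{\pi i/4}\sqrt{\pi/2}$: writing $E^{(j)}(u)=(-c)^j H_j(cu) e^{-\pi i u^2/2}$ gives $|E^{(j)}(u)| \ll (1+|u|)^j$ since $H_j$ has degree $j$. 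For the derivatives of $V$, the chain rule yields $V^{(j)}(u)=\tau^{j/2}\Upsilon^{(j)}(\sqrt{\tau} u;\tau)$, which Proposition \ref{upb} (with the argument $\sqrt{\tau} u$ in place of $u$) bounds by
\begin{equation*}
\tau^{j/2} \Upsilon^{(j)}(\sqrt{\tau}u;\tau) \ll \tau^{-1/2}(1+\sqrt{\tau}|u|)\bigl(1+\tau^{-j/2}+|u|^j\bigr).
\end{equation*}

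Assembling these estimates and factoring out $\tau^{-1/4}(1+\sqrt{\tau}|u|)$ reduces the theorem to the elementary claim
\begin{equation*}
\sum_{j=0}^k (1+|u|)^j \bigl(1+\tau^{-(k-j)/2}+|u|^{k-j}\bigr) \ll 1+\tau^{-k/2}+|u|^k,
\end{equation*}
with implied constant depending only on $k$. The terms $(1+|u|)^j(1+|u|^{k-j})$ are absorbed into $1+|u|^k$ immediately. The cross terms $(1+|u|)^j\tau^{-(k-j)/2}$ I would dispatch by splitting into the cases $|u|\geq \tau^{-1/2}$ (where $\tau^{-(k-j)/2}\leq |u|^{k-j}$, absorbing into $|u|^k$) and $|u|\leq \tau^{-1/2}$ (where $(1+|u|)^j \ll \max(1,\tau^{-j/2})$, absorbing into $\tau^{-k/2}$). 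The main obstacle, such as it is, lies in being systematic with this final bookkeeping; the analytic input has already been done in Proposition \ref{upb}, and the rest is algebra together with the clean use of the modular symmetry \e{fft} to halve the work.
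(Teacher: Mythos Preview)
Your proposal is correct and follows essentially the same approach as the paper: expand $G^{(k)}(u;\tau)$ by Leibniz's rule (which the paper writes explicitly as \e{fwy}), bound the Hermite factor by $1+|u|^j$ and the $\Upsilon$-factor via Proposition~\ref{upb}, simplify the resulting sum to obtain the $\tau\lqs 1$ case, and then invoke \e{fft} to deduce the $\tau\gqs 1$ case. Your final bookkeeping by splitting on $|u|\gtrless \tau^{-1/2}$ is a slightly more detailed version of the paper's one-line passage from \e{xhu}, but the argument is the same.
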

\begin{proof}
From the definition \e{fdef} and \e{herm} we have
\begin{multline}
  G^{(k)}(u;\tau)  :=\tau^{1/4}e^{\pi i/8}\sum_{j=0}^k \binom{k}{j} \frac{d^j}{du^j}\exp\left( -\frac{\pi i u^2}{2}\right) \cdot \frac{d^{k-j}}{du^{k-j}}\Upsilon(\sqrt{\tau}u;\tau) \\
 = \tau^{1/4}\exp\left( -\frac{\pi i u^2}{2}+\frac{\pi i}8\right) \sum_{j=0}^k \binom{k}{j} e^{5\pi i j/4} \left(\frac{\pi}{2}\right)^{j/2} H_j\left(e^{\pi i/4}\frac{\sqrt{\pi}}{\sqrt{2}} u \right) \cdot \tau^{(k-j)/2}\Upsilon^{(k-j)}(\sqrt{\tau}u;\tau). \label{fwy}
\end{multline}
Then, using  Proposition \ref{upb},
\begin{align}
  G^{(k)}(u;\tau) & \ll \tau^{-1/4}\sum_{j=0}^k \left( 1+|u|^j\right) \left(1+\tau^{1/2}|u|\right)\left(1+ \tau^{(j-k)/2}+|u|^{k-j}\right)\notag\\
& \ll \tau^{-1/4}\left( 1+\tau^{1/2}|u|\right)\left(1+\tau^{-k/2}+ |u|^{k}\right) \label{xhu}
\end{align}
for $\tau \lqs 1$. 
When $\tau \gqs 1$, the relation \e{fft} combined with \e{xhu} finishes the proof of \e{mar}.
\end{proof}

\begin{cor} \label{lamb}
For $\lambda>0$ and $a,b$ satisfying $0\lqs a,b\lqs 1$ we have
\begin{equation*}
  G^{(k)}(a \lambda^{-1}+b\lambda;\lambda^2) =O\left( \lambda^{k+1/2}+\lambda^{-k-1/2}\right)
\end{equation*}
for an implied constant depending only on $k\in \Z_{\gqs 0}$.
\end{cor}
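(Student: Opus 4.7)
The plan is to apply the theorem preceding this corollary directly, with the specific substitutions $\tau = \lambda^2$ and $u = a\lambda^{-1} + b\lambda$, and then simplify each factor using the assumption $0 \lqs a,b \lqs 1$. The two cases in the theorem ($\tau \lqs 1$ versus $\tau \gqs 1$) will correspond exactly to $\lambda \lqs 1$ versus $\lambda \gqs 1$, and will produce the two terms $\lambda^{-k-1/2}$ and $\lambda^{k+1/2}$ on the right-hand side.

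First I would handle the case $\lambda \lqs 1$. Here $\tau = \lambda^2 \lqs 1$, so the first branch of \e{mar} applies. Since $0 \lqs a,b \lqs 1$, the argument satisfies $|u| \lqs \lambda^{-1} + \lambda \lqs 2\lambda^{-1}$. The three factors in the bound then simplify as follows: $\tau^{-1/4} = \lambda^{-1/2}$; $1 + \tau^{1/2}|u| \lqs 1 + \lambda(\lambda^{-1} + \lambda) \lqs 3$; and $1 + \tau^{-k/2} + |u|^k \lqs 1 + \lambda^{-k} + 2^k \lambda^{-k} \ll \lambda^{-k}$. Multiplying these gives $G^{(k)}(a\lambda^{-1}+b\lambda;\lambda^2) \ll \lambda^{-k-1/2}$.

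Next I would handle the case $\lambda \gqs 1$, which is completely symmetric. Now $\tau = \lambda^2 \gqs 1$, so the second branch of \e{mar} applies, and $|u| \lqs 2\lambda$. The factors become: $\tau^{1/4} = \lambda^{1/2}$; $1 + \tau^{-1/2}|u| \lqs 1 + \lambda^{-1}(\lambda^{-1} + \lambda) \lqs 3$; and $1 + \tau^{k/2} + |u|^k \lqs 1 + \lambda^k + 2^k\lambda^k \ll \lambda^k$. Multiplying yields $G^{(k)}(a\lambda^{-1}+b\lambda;\lambda^2) \ll \lambda^{k+1/2}$.

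Combining the two cases, one of $\lambda^{-k-1/2}$ or $\lambda^{k+1/2}$ dominates in each regime, so the single uniform estimate $G^{(k)}(a\lambda^{-1}+b\lambda;\lambda^2) \ll \lambda^{k+1/2} + \lambda^{-k-1/2}$ holds for all $\lambda > 0$. There is no genuine obstacle here; the only thing to be careful about is verifying that under the constraint $0 \lqs a,b \lqs 1$ the mixed quantities $\tau^{1/2}|u|$ (when $\lambda \lqs 1$) and $\tau^{-1/2}|u|$ (when $\lambda \gqs 1$) remain bounded by absolute constants, which is precisely what makes the middle factor of \e{mar} contribute no growth in $\lambda$.
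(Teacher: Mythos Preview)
Your proof is correct and is precisely the direct substitution into \e{mar} that the paper intends; the corollary is stated without proof in the paper because it follows immediately from the preceding theorem in exactly the way you describe.
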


\subsection{Linear independence}
The next result will be needed in the proof of Theorem \ref{polss}.

\begin{prop} \label{ffff}
Let $\tau$ with $\Re(\tau)>0$ be fixed. The functions of $u$ in the set
\begin{equation*}
  \left\{G(u;\tau), \ G^{(1)}(u;\tau), \ G^{(2)}(u;\tau), \ \dots , \ G^{(m)}(u;\tau)\right\}
\end{equation*}
are linearly independent for any $m\in \Z_{\gqs 0}$. (The $m=0$ case is saying that, for each $\tau$, $G(u;\tau)$ is never identically $0$ as a function of $u$.)
\end{prop}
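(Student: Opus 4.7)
The plan is a proof by contradiction using constant-coefficient linear ODE theory. Suppose a nontrivial relation $\sum_{k=0}^m c_k G^{(k)}(u;\tau)=0$ holds identically in $u$ for some fixed $\tau$ with $\Re(\tau)>0$; let $c_m\neq 0$ be the leading coefficient. The general theory of $m$-th order constant-coefficient linear ODEs then produces an explicit representation
\[
G(u;\tau)=\sum_{j} p_j(u) e^{\lambda_j u},
\]
where the $\lambda_j$ are the roots (with multiplicity) of the characteristic polynomial $\sum_k c_k \lambda^k$ and the $p_j$ are polynomials. In particular, $G(u;\tau)$ would be an entire function of $u$ of order at most $1$.

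The heart of the proof is to derive a functional equation for $G$ whose right-hand side is genuinely of order $2$ in $u$. Starting from the translation identity \eqref{up1} and setting $v=\sqrt{\tau}\, u$ (so that $v+1=\sqrt{\tau}(u+1/\sqrt{\tau})$), a direct computation using the definition \eqref{fdef} of $G$ yields
\[
G(u+1/\sqrt{\tau};\tau) - e^{-\pi i u/\sqrt{\tau} - \pi i/(2\tau)}\, G(u;\tau) = \tau^{-1/4}\, e^{7\pi i/8 - \pi i/(2\tau)}\, e^{\pi i u^2/2 - \pi i u/\sqrt{\tau}}. \qquad(\star)
\]
The presence of the quadratic-exponential factor $e^{\pi i u^2/2}$ on the right is the crucial feature.

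The contradiction then comes from a growth estimate along the ray $u=Re^{-i\pi/4}$ as $R\to\infty$. On this ray $\pi i u^2/2=\pi R^2/2$, so the right-hand side of $(\star)$ has magnitude at least $e^{\pi R^2/2 - O(R)}$; the $O(R)$ error arises from the factor $e^{-\pi i u/\sqrt{\tau}}$, which is of order $1$ in $u$ since $1/\sqrt{\tau}$ has bounded modulus for the fixed $\tau$. On the other hand, substituting $G(u;\tau)=\sum p_j(u)e^{\lambda_j u}$ into the left-hand side produces a finite sum of terms of the form $q(u)e^{\mu u}$ with $q$ polynomial, and the magnitude of such an expression along our ray is at most $C R^D e^{C'R}$ for constants depending on $\tau$ and the $\lambda_j$. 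These two estimates are incompatible for all sufficiently large $R$, giving the desired contradiction. The exceptional case $m=0$, which asserts $G(u;\tau)\not\equiv 0$ as a function of $u$, reduces to the same identity \eqref{up1}: vanishing of $G(\cdot;\tau)$ forces $\Upsilon(\cdot;\tau)\equiv 0$, contradicting the fact that $\Upsilon(v+1;\tau)-\Upsilon(v;\tau)=\tau^{-1/2}e^{\pi i(v^2/\tau+3/4)}\not\equiv 0$.

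The only genuinely delicate step is the derivation of $(\star)$, which is elementary but requires careful tracking of the quadratic exponent $-\pi i u^2/2$ appearing in the definition \eqref{fdef} of $G$; once $(\star)$ is in hand, the subsequent comparison of orders of growth is routine and works uniformly for any fixed $\tau$ with $\Re(\tau)>0$.
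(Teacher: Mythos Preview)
Your argument is correct, and the derivation of $(\star)$ checks out exactly. The approach is genuinely different from the paper's. The paper first rewrites the assumed relation in terms of $\Upsilon^{(j)}(u;\tau)$ with polynomial coefficients, then applies the forward difference operator $\Delta$ a total of $m+1$ times (using $\Upsilon(u+1;\tau)-\Upsilon(u;\tau)=\tau^{-1/2}e^{\pi i(u^2/\tau+3/4)}$) to kill off all the $\Upsilon^{(j)}$'s; what remains is an identity $\sum_{k=0}^m \phi_k(u)e^{2\pi i k u/\tau}=0$ with polynomial $\phi_k$, and a direct computation of the top coefficient of $\phi_m$ shows it is a nonzero multiple of $c_m$. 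Your route is shorter and more conceptual: rather than iterated differencing and an explicit leading-coefficient calculation, you invoke the solution structure of a constant-coefficient ODE once and compare growth orders along a single ray. The paper's method has the virtue of being purely algebraic (no growth estimates needed) and of making the coefficient $c_m$ appear explicitly, while yours avoids all the combinatorics and makes transparent \emph{why} the relation fails, namely that the order-$2$ term $e^{\pi i u^2/2}$ hidden in the definition of $G$ cannot be captured by any finite exponential sum.
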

\begin{proof}
Suppose we have
\begin{equation} \label{cfs}
  \sum_{j=0}^m c_j \tau^{-j/2}G^{(j)}(u;\tau)=0 \qquad \quad (u\in \C).
\end{equation}
The constants $c_j$ may depend on the fixed $\tau$ and  it is convenient to include the nonzero factor $\tau^{-j/2}$.
Replacing $G(u;\tau)$ with $\Upsilon(\sqrt{\tau} u;\tau)$ using \e{fwy}, and then replacing $u$ by $u/\sqrt{\tau}$ implies
\begin{equation} \label{sumup}
   \sum_{j=0}^m \psi_j(u) \Upsilon^{(j)}(u;\tau)=0 \qquad \quad (u\in \C)
\end{equation}
for polynomials $\psi_j(u)$. Explicitly, for $0\lqs j\lqs m$, we have
\begin{equation*}
  \psi_j(u) =  \sum_{k=j}^m c_k  \binom{k}{j} e^{5\pi i (k-j)/4}\left(\frac{\pi}{2\tau}\right)^{(k-j)/2} H_{k-j}\left(e^{\pi i/4}\frac{\sqrt{\pi}}{\sqrt{2\tau}} u \right).
\end{equation*}
The highest degree term in $H_n(y)$ is $2^n y^n$ by \e{hann}
and so $\psi_j(u)$ has degree $m-j$ with highest degree term $c_m \binom{m}{j} (-\pi i /\tau)^{m-j} \cdot u^{m-j}$.

Since we know \e{up1}, it is natural to apply the difference operator $\Delta$ to \e{sumup}. We have
\begin{equation*}
  \Delta \Upsilon^{(j)}(u;\tau):= \Upsilon^{(j)}(u+1;\tau)-\Upsilon^{(j)}(u;\tau) =L_j(u) e^{\pi i u^2/\tau}
\end{equation*}
for, using the calculation in \e{brex} with $m=1$,
\begin{equation*}
  L_j(u):=\tau^{-1/2} e^{3\pi i(j+1)/4}\left(\frac \pi \tau\right)^{j/2} H_j\left(e^{-\pi i/4} \frac{\sqrt{\pi}}{\sqrt{\tau}} u\right) .
\end{equation*}
Recall that $\Delta (f(u)g(u))=(\Delta f(u))\cdot g(u) +f(u+1)\cdot (\Delta g(u))$. Hence $\Delta$  applied to \e{sumup} implies
\begin{equation} \label{ms}
  \sum_{j=0}^{m-1}\left(\Delta \psi_j(u)\right) \Upsilon^{(j)}(u;\tau) +\sum_{j=0}^m \psi_j(u+1)L_j(u) e^{\pi i u^2/\tau} =0
\end{equation}
and a second application gives
\begin{equation*}
  \sum_{j=0}^{m-2}\left(\Delta^2 \psi_j(u)\right) \Upsilon^{(j)}(u;\tau)
+\sum_{j=0}^{m-1} \left(\Delta \psi_j(u+1)\right) L_j(u) e^{\pi i u^2/\tau}
+\sum_{j=0}^m \Delta\left[\psi_j(u+1)L_j(u) e^{\pi i u^2/\tau}\right] =0.
\end{equation*}
Clearly, applying $\Delta$ to a polynomial reduces the degree by at least $1$.
After a total of $m+1$ applications of the difference operator to  \e{sumup}, the functions $\Upsilon^{(j)}(u;\tau)$ disappear and we are left with
\begin{equation}\label{1am}
  \sum_{k=0}^m\sum_{j=0}^{m-k} \Delta^{m-k}\left[\left(\Delta^k \psi_j(u+1)\right)L_j(u) e^{\pi i u^2/\tau}\right] =0.
\end{equation}
Dividing both sides of \e{1am} by $e^{\pi i u^2/\tau}$ implies
\begin{equation} \label{zero}
  \sum_{k=0}^m \phi_k(u) e^{2\pi i k u/\tau} = 0
\end{equation}
for polynomials $\phi_k(u)$. Clearly $e^{2\pi i m u/\tau}$ only appears in \e{1am} when $k=0$. Noting that
\begin{equation*}
  \Delta^m(f(u)g(u))=\sum_{j=0}^m \binom{m}{j} \Delta^{m-j}(f(u+j))\cdot \Delta^j g(u)
\end{equation*}
we find
\begin{equation*}
  \phi_m(u) = e^{\pi i m^2/\tau}\sum_{j=0}^{m} \psi_j(u+m+1)L_j(u+m).
\end{equation*}

The degree of $\phi_m(u)$ is $m$ since $\psi_j(u)$ has degree $m-j$ and $L_j(u)$ has degree $j$. The coefficient of $u^m$ in $\phi_m(u)$ is therefore
\begin{multline} \label{zero2}
  e^{\pi i m^2/\tau}\sum_{j=0}^{m} \left[c_m \binom{m}{j} \left(\frac{-\pi i }{\tau}\right)^{m-j} \right]\cdot \left[\tau^{-1/2}  e^{3\pi i/4}\left(\frac {2\pi i} \tau\right)^{j}\right]  \\
  = c_m \cdot  \tau^{-1/2}  e^{\pi i (m^2/\tau+3/4)} \sum_{j=0}^{m} \binom{m}{j}  \left(\frac{-\pi i }{\tau}\right)^{m-j} \left(\frac {2\pi i} \tau\right)^{j}\\
= c_m \cdot \tau^{-1/2} e^{\pi i (m^2/\tau+3/4)} \left(\frac{\pi i }{\tau}\right)^{m}.
\end{multline}
However, it follows from \e{zero} that all the polynomials $\phi_k(u)$ are identically $0$. A simple way to see this is to put $u=-i\tau y$ and examine the size of each term as $y\to \infty$. Hence \e{zero2} is $0$ and so $c_m=0$. Repeating this argument shows that all of the coefficients $c_0,c_1, \dots,c_m$ in \e{cfs} are $0$, as we wanted to prove.
\end{proof}

\section{Some series expansions}

\subsection{The Riemann-Siegel  function $\vartheta(s)$} \label{rsf}
We begin with 
\begin{lemma} \label{bin}
Suppose $s\in \C$ satisfies $\sigma \in I$ and $t \neq 0$. Then for all $k,R \in \Z_{\gqs 0}$ we have
\begin{equation} \label{tay}
  \frac{1}{(\sigma+it)^k} = \frac 1{(it)^k}\sum_{r=0}^{R-1}\binom{k+r-1}{r} \left( \frac{-\sigma}{it}\right)^r +O\left(\frac{1}{|t|^{k+R}}\right)
\end{equation}
for an implied constant depending only on $I$, $k$ and $R$.
\end{lemma}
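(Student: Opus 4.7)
The plan is to factor out $it$ and apply the binomial series. Write
\begin{equation*}
   \frac{1}{(\sigma+it)^k} = \frac{1}{(it)^k}\left(1+\frac{\sigma}{it}\right)^{-k},
\end{equation*}
and recall the generalized binomial expansion
\begin{equation*}
  (1+x)^{-k} = \sum_{r=0}^\infty \binom{-k}{r} x^r = \sum_{r=0}^\infty \binom{k+r-1}{r}(-x)^r,
\end{equation*}
valid for $|x|<1$. Setting $x=\sigma/(it)$ produces exactly the main terms in \e{tay}, so the only work is to estimate the error $E(s):=\frac{1}{(\sigma+it)^k}-\frac{1}{(it)^k}\sum_{r=0}^{R-1}\binom{k+r-1}{r}\left(\frac{-\sigma}{it}\right)^r$ by $O(1/|t|^{k+R})$ uniformly in $\sigma\in I$.

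Let $C:=\sup_{\sigma\in I}|\sigma|$, which is finite. I would split into two ranges of $|t|$. When $|t|\gqs 2C$ we have $|x|=|\sigma/(it)|\lqs 1/2$, so the standard Taylor-remainder estimate for the binomial series (or equivalently, summing the geometric tail) yields
\begin{equation*}
  \left|(1+x)^{-k} - \sum_{r=0}^{R-1}\binom{k+r-1}{r}(-x)^r\right| \lqs M_{k,R}\, |x|^R
\end{equation*}
with $M_{k,R}$ depending only on $k,R$. Multiplying by $|it|^{-k}=|t|^{-k}$ gives $|E(s)|\lqs M_{k,R} C^R/|t|^{k+R}$, which is the desired bound.

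When $0<|t|<2C$ the series approach fails, but the target error $1/|t|^{k+R}$ is itself large, so a crude triangle-inequality estimate suffices. Using $|\sigma+it|\gqs |t|$ gives $|(\sigma+it)^{-k}|\lqs 1/|t|^k$, and the $r$th term of the sum is bounded by $\binom{k+r-1}{r}C^r/|t|^{k+r}$. Hence
\begin{equation*}
  |E(s)| \lqs \frac{1}{|t|^k}+\sum_{r=0}^{R-1}\binom{k+r-1}{r}\frac{C^r}{|t|^{k+r}}.
\end{equation*}
Since $|t|\lqs 2C$ implies $1/|t|^{k+m}\lqs (2C)^{R-m}/|t|^{k+R}$ for each $0\lqs m\lqs R$, every term is controlled by a constant (depending only on $I,k,R$) times $1/|t|^{k+R}$. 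Combining the two ranges gives \e{tay} with an implied constant depending only on $I$, $k$ and $R$. The only (mild) obstacle is the small-$|t|$ regime, handled by observing that the claimed error is already so large there that a trivial bound is enough.
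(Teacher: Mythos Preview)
Your proof is correct and follows essentially the same approach as the paper: factor out $it$, use the binomial/Taylor remainder for $|t|$ large (the paper splits at $|t|\gqs 2|\sigma|$ rather than $|t|\gqs 2C$, but this is cosmetic), and for small $|t|$ observe that the claimed error is large enough that a crude triangle-inequality bound suffices.
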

\begin{proof}
By Taylor's Theorem and bounding the integral form of the remainder in the usual way, as in \e{20b}, we have
\begin{equation} \label{summ}
  (1+z)^{-k} = \sum_{r=0}^{R-1}\binom{-k}{r}z^r +O(|z|^R)
\end{equation}
for all $z\in \C$ when $|z|\lqs 1/2$, say. The coefficients of $z^r$ in the sum \e{summ} are given by the Generalized Binomial Theorem. With $z=\sigma/(it)$, this proves \e{tay} when $|t|\gqs 2|\sigma|$. If $0< |t| \lqs 2|\sigma|$ then
\begin{multline*}
 |t|^{k+R} \left| \frac{1}{(\sigma+it)^k} - \frac 1{(it)^k}\sum_{r=0}^{R-1}\binom{k+r-1}{r} \left( \frac{-\sigma}{it}\right)^r\right|\\
\lqs |t|^R+\sum_{r=0}^{R-1}\binom{k+r-1}{r} |\sigma|^r |t|^{R-r}
\lqs |2\sigma|^R+\sum_{r=0}^{R-1}\binom{k+r-1}{r} |\sigma|^r |2\sigma|^{R-r} = O(1).
\end{multline*}
Hence the error in \e{tay} is  $O(|t|^{-k-R})$ for $0< |t| \lqs 2|\sigma|$ as well, completing the proof.
\end{proof}

Hermite and Barnes gave the asymptotics of $\log \G(z+a)$ as $|z| \to \infty$ when $0\lqs a \lqs 1$. These shifted argument results and further improvements are described in \cite{Ne}. See also \cite[Ex. 4.4, p. 295]{Ol}, for example.
Our next proposition shows the asymptotics of $\log \G(s)$, for $s$  in any vertical strip, in terms of $\sigma$ and $t$. It agrees with the previously mentioned work when $0\lqs \sigma \lqs 1$.

\begin{prop} \label{logg}
Suppose $s\in \C$ satisfies $\sigma \in I$ and $t \neq 0$. Then
\begin{equation*}
  \log \G(s) = \left(s-\frac 12\right)\log it - it +\frac 12 \log 2\pi
   -\sum_{k=1}^{N-1} \left( \frac it \right)^k \frac{ B_{k+1}(\sigma)}{k(k+1)}
+O\left( \frac{1}{|t|^{N}}\right)
\end{equation*}
for an implied constant depending only on $I$ and $N\in \Z_{\gqs 1}$.
\end{prop}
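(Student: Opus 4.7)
My plan is to recenter the classical Stirling asymptotic
\[
\log\G(s) = \bigl(s - \tfrac12\bigr)\log s - s + \tfrac12\log(2\pi) + \sum_{k=1}^{N-1}\frac{B_{k+1}}{k(k+1)}\frac{1}{s^{k}} + O\bigl(|s|^{-N}\bigr),
\]
valid uniformly in any sector $|\arg s|\lqs \pi-\delta$ with $|s|\gqs 1$, so that the basepoint becomes $it$ rather than $s$. This $B_{k+1}$ form matches the more familiar $B_{2k}$ form because $B_{k+1}=0$ whenever $k+1$ is odd and $\gqs 3$. Since $\sigma\in I$ is bounded but $t$ is only required to be nonzero, the small-$|t|$ regime is handled exactly as in the second half of the proof of Lemma \ref{bin}: the claimed $O(|t|^{-N})$ is already absolute when $|t|$ is bounded.

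For the asymptotic regime, first I would expand $\log s = \log(it) + \log\bigl(1+\sigma/(it)\bigr)$ by Taylor's theorem with integral remainder. Multiplying by $(s-\tfrac12) = it + (\sigma-\tfrac12)$, subtracting $s = it+\sigma$, and converting $(it)^{-r} = (-1)^{r}(i/t)^{r}$, this produces
\[
(s-\tfrac12)\log s - s = (s-\tfrac12)\log(it) - it + \sum_{n=1}^{N-1} P_n(\sigma)\bigl(\tfrac{i}{t}\bigr)^{n} + O(|t|^{-N})
\]
with explicit polynomials $P_n(\sigma)$ involving only the pure monomials $\sigma^{n}$ and $\sigma^{n+1}$. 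Next, Lemma \ref{bin} applied to each $s^{-k}$ in the Stirling tail (for $k<N$; the tail with $k\gqs N$ is absorbed in the error since $s^{-k}=O(|t|^{-N})$ there) yields
\[
\sum_{k=1}^{N-1}\frac{B_{k+1}}{k(k+1)\,s^{k}} = \sum_{n=1}^{N-1} Q_n(\sigma)\bigl(\tfrac{i}{t}\bigr)^{n} + O(|t|^{-N}),
\]
where $Q_n(\sigma)$ is an explicit binomial combination of the terms $B_{k+1}\sigma^{n-k}$ for $1\lqs k\lqs n$.

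The proof then reduces to the polynomial identity $P_n(\sigma) + Q_n(\sigma) = -B_{n+1}(\sigma)/\bigl(n(n+1)\bigr)$ for each $n\gqs 1$. To verify this I would invoke the binomial identity
\[
B_{n+1}(\sigma) = \sum_{j=0}^{n+1}\binom{n+1}{j}B_{n+1-j}\,\sigma^{j},
\]
an immediate consequence of $t e^{\sigma t}/(e^{t}-1) = \bigl(\sum_{r}\sigma^{r}t^{r}/r!\bigr)\bigl(\sum_{m}B_{m}t^{m}/m!\bigr)$ together with \e{behe}. The terms $j=n$ and $j=n+1$ on the right should match the $P_n$ contribution (arising from $\log(1+\sigma/(it))$ and the cancellation of $\sigma$ against $-s$), while the terms $j\lqs n-1$ should match the $Q_n$ contribution. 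The main obstacle is purely the bookkeeping of signs: the alternating series for $\log(1+\sigma/(it))$, the factor $(-\sigma)^{r}$ in Lemma \ref{bin}, and the conversion $(it)^{-n} = (-1)^{n}(i/t)^{n}$ must align so that the coefficient of $(i/t)^{n}$ emerges with the single overall minus sign in the statement.
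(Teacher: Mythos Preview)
Your proposal is correct and follows the same approach as the paper: recenter Stirling's expansion at $it$ using Lemma \ref{bin} and the Taylor expansion of $\log(1+\sigma/(it))$, then identify the coefficient of $(i/t)^n$ with $-B_{n+1}(\sigma)/(n(n+1))$ via the binomial formula $B_{n+1}(\sigma)=\sum_j\binom{n+1}{j}B_j\,\sigma^{n+1-j}$. The only difference is that the paper starts from the exact Stirling series with integral remainder (valid for all $s\notin(-\infty,0]$) and bounds that remainder directly as $O(|t|^{1-2M})$, which sidesteps any separate small-$|t|$ argument.
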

\begin{proof}
Stirling's series as in \cite[p. 294]{Ol} states that
for all $s\in \C$ with  $s\notin (-\infty,0]$ we have
\begin{equation} \label{stir}
  \log \G(s)-\left(s-\frac 12\right)\log s + s -\frac {\log 2\pi}2 = \sum_{n=1}^{M-1} \frac{B_{2n}}{2n(2n-1)}
\frac 1{s^{2n-1}} -\frac{1}{2M}\int_0^\infty \frac{B_{2M}(v-\lfloor v\rfloor)}{(v+s)^{2M}}\, dv
\end{equation}
where  $M\in \Z_{\gqs 1}$.
We may replace the last term in \e{stir} with $O(1/|t|^{2M-1})$ since
\begin{equation*}
  \int_0^\infty \frac{|B_{2M}(v-\lfloor v\rfloor)|}{|v+s|^{2M}}\, dv \ll \int_{-\infty}^\infty \frac{1}{((v+\sigma)^2+t^2)^{M}}\, dv = \sqrt{\pi}\frac{\G(M-1/2)}{\G(M)}\frac 1{|t|^{2M-1}}
\end{equation*}
where the last equality is \cite[3.241.4]{GR}.
With Lemma \ref{bin} we may write each $1/s^{2n-1}$ term in \e{stir} as
\begin{equation*}
  \frac 1{s^{2n-1}} = \sum_{r=0}^{R_n-1} \binom{2n+r-2}{r}\frac{(-i)^{2n-1}(i\sigma)^r}{t^{2n+r-1}}+O\left( \frac{1}{|t|^{2n+R_n-1}}\right).
\end{equation*}
Choosing $R_n$ so that $2n+R_n-1=2M-1$ we find that the left side of \e{stir} equals
\begin{multline*}
  \sum_{n=1}^{M-1} \frac{B_{2n}}{2n(2n-1)}
\sum_{r=0}^{2M-2n-1} \binom{2n+r-2}{r}\frac{(-i)^{2n-1}(i\sigma)^r}{t^{2n+r-1}}
+O\left( \frac{1}{|t|^{2M-1}}\right)\\
  =-\sum_{k=1}^{2M-2} \frac{i^k}{t^k}\sum_{n=1}^{\lfloor (k+1)/2\rfloor} \binom{k-1}{2n-2} \frac{B_{2n}}{2n(2n-1)}\sigma^{k+1-2n}+O\left( \frac{1}{|t|^{2M-1}}\right)\\
=-\sum_{k=1}^{2M-2} \frac{i^k}{k(k+1)} \left[ B_{k+1}(\sigma)+\frac{k+1}{2}\sigma^k-\sigma^{k+1}\right] \frac 1{t^k}
+O\left( \frac{1}{|t|^{2M-1}}\right).
\end{multline*}
Therefore
\begin{multline} \label{ojk}
  \log \G(s) = \left(s-\frac 12\right)\log s - s +\frac 12 \log 2\pi  \\
   -\sum_{k=1}^{N-1} \frac{i^k}{k(k+1)} \left[ B_{k+1}(\sigma)+\frac{k+1}{2}\sigma^k-\sigma^{k+1}\right] \frac 1{t^k}
+O\left( \frac{1}{|t|^{N}}\right).
\end{multline}
A similar proof to Lemma \ref{bin} shows that, for $\sigma \in I$ and $t\neq 0$,
\begin{equation*}
  \log s =\log(it)+\log \left(1+\frac{\sigma}{it}\right) = \log(it)-\sum_{k=1}^{N-1} \frac{(i\sigma)^k}{k \cdot t^k} +O\left( \frac{1}{|t|^{N}}\right)
\end{equation*}
where the implied constant depends only on  $N \in \Z_{\gqs 1}$ and $I$.
Hence
\begin{equation} \label{ojk2}
  \left(s-\frac 12\right)\log s =(s-1/2)\log(it)+\sigma+\sum_{k=1}^{N-1}  \left( \frac it \right)^k \left[ \frac{\sigma^k}{2k} -\frac{\sigma^{k+1}}{k(k+1)}\right] +O\left( \frac{1}{|t|^{N}}\right).
\end{equation}
Inserting \e{ojk2} into \e{ojk} completes the proof.
\end{proof}

Since $-2i \vartheta(s) =(s-1/2)\log \pi+\log \G((1-s)/2)-\log \G(s/2)$ we easily now obtain

\begin{cor} \label{rsthe}
Suppose $s\in \C$ satisfies $\sigma \in I$ and $t \neq 0$. Then
\begin{multline*}
  i \vartheta(s) = \left( \frac s2-\frac 14\right) \log \frac{|t|}{2\pi} -\frac{i t}2 -\sgn(t)\frac{i \pi}8\\
   -\sum_{n=1}^{N-1} \left(\frac{2i}t \right)^n \left[\frac{ B_{n+1}(\sigma/2) +(-1)^{n+1} B_{n+1}((1-\sigma)/2) }{2n(n+1)}\right] +O\left( \frac 1{|t|^N}\right)
\end{multline*}
for an implied constant depending only on $I$  and $N\in \Z_{\gqs 1}$.
\end{cor}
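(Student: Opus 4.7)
The plan is to start from the identity
\begin{equation*}
  -2i\vartheta(s) = \log\chi(s) = \left(s-\tfrac12\right)\log\pi + \log\Gamma\!\left(\tfrac{1-s}{2}\right) - \log\Gamma\!\left(\tfrac{s}{2}\right)
\end{equation*}
and apply Proposition \ref{logg} to each of the two $\log\Gamma$ terms. For $\log\Gamma(s/2)$ the relevant imaginary part is $t/2$, while for $\log\Gamma((1-s)/2)$ it is $-t/2$, so the sign of $t$ must be tracked carefully through the logarithm that appears in Proposition \ref{logg}.

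First I would record the principal-branch evaluations $\log(it/2) = \log|t/2| + \sgn(t)\frac{i\pi}{2}$ and $\log(-it/2) = \log|t/2| - \sgn(t)\frac{i\pi}{2}$, valid for every $t\neq 0$. Substituting these into the two applications of Proposition \ref{logg} gives
\begin{align*}
  \log\Gamma\!\left(\tfrac{s}{2}\right) &= \left(\tfrac{s}{2}-\tfrac12\right)\!\left(\log|t/2|+\sgn(t)\tfrac{i\pi}{2}\right) - \tfrac{it}{2} + \tfrac12\log 2\pi \\
  &\quad - \sum_{n=1}^{N-1}\left(\tfrac{2i}{t}\right)^{n}\frac{B_{n+1}(\sigma/2)}{n(n+1)} + O\!\left(|t|^{-N}\right),\\
  \log\Gamma\!\left(\tfrac{1-s}{2}\right) &= \left(-\tfrac{s}{2}\right)\!\left(\log|t/2|-\sgn(t)\tfrac{i\pi}{2}\right) + \tfrac{it}{2} + \tfrac12\log 2\pi \\
  &\quad - \sum_{n=1}^{N-1}\left(\tfrac{-2i}{t}\right)^{n}\frac{B_{n+1}((1-\sigma)/2)}{n(n+1)} + O\!\left(|t|^{-N}\right).
\end{align*}
Subtracting the first from the second, the $\log|t/2|$ contributions combine to $-(s-1/2)\log|t/2|$, the linear $it$ terms add to $it$, the $\frac12\log 2\pi$ pieces cancel, and the four $\sgn(t)\cdot i\pi/2$ terms collapse precisely to $\sgn(t)\cdot i\pi/4$. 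Adding the remaining $(s-1/2)\log\pi$ and absorbing the $\log 2$ inside $\log|t/2|$ yields
\begin{equation*}
  -2i\vartheta(s) = -\left(s-\tfrac12\right)\log\frac{|t|}{2\pi} + it + \sgn(t)\frac{i\pi}{4} + \mathcal{S}(s,t) + O\!\left(|t|^{-N}\right),
\end{equation*}
where $\mathcal{S}(s,t)$ is the combined tail. Dividing by $-2$ produces the stated main terms $(s/2-1/4)\log(|t|/2\pi) - it/2 - \sgn(t)\, i\pi/8$.

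For the correction sums, combining the two tails of Proposition \ref{logg} gives a $k$th coefficient proportional to $B_{k+1}(\sigma/2) - (-1)^{k}B_{k+1}((1-\sigma)/2) = B_{k+1}(\sigma/2) + (-1)^{k+1}B_{k+1}((1-\sigma)/2)$; after dividing by $-2$ this matches the sum written in the corollary with index renamed to $n$. The main obstacle, such as it is, is simply bookkeeping: keeping the two branches of $\log$ straight (hence the $\sgn(t)$) and checking that the four half-integer shifts from the $-1/2$'s in Proposition \ref{logg} really do combine to produce exactly the $\sgn(t)\cdot i\pi/8$ constant without any spurious $2\pi i$ ambiguity. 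Consistency with the normalization $\vartheta(1/2)=0$ is automatic because both sides are holomorphic off the real cuts and Proposition \ref{logg} is derived from principal-branch Stirling, so the two sides agree as holomorphic functions, not merely modulo $2\pi i$.
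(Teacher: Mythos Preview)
Your proposal is correct and follows exactly the approach the paper intends: the paper itself says only ``Since $-2i \vartheta(s) =(s-1/2)\log \pi+\log \G((1-s)/2)-\log \G(s/2)$ we easily now obtain'' the corollary, and you have supplied precisely those details, applying Proposition~\ref{logg} to each $\log\G$ term and tracking the $\sgn(t)$ through the principal logarithm. The arithmetic in your collapse of the $\log|t/2|$, $it$, and $\sgn(t)\,i\pi/2$ contributions is right, as is the combination of the two Bernoulli tails into the stated sum.
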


\begin{cor} \label{rsthe2}
Suppose  $t \neq 0$. Then
\begin{equation*}
   \vartheta(1/2+i t) =  \frac t2 \log \frac{|t|}{2\pi} -\frac{ t}2 -\sgn(t)\frac{ \pi}8
   -\sum_{n=1}^{N-1} \frac{ (-4)^{n-1} B_{2n}(1/4)}{(2n-1)n \cdot t^{2n-1}}  +O\left( \frac 1{|t|^{2N-1}}\right)
\end{equation*}
for an implied constant depending only on  $N\in \Z_{\gqs 1}$.
\end{cor}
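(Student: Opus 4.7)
The plan is to specialize Corollary \ref{rsthe} to $s=1/2+it$ and then isolate the nonvanishing terms in the resulting asymptotic expansion. First I would substitute $\sigma=1/2$, so that $s/2-1/4 = it/2$, producing the main terms
\begin{equation*}
  i \vartheta(1/2+it) = \frac{it}{2}\log\frac{|t|}{2\pi} - \frac{it}{2} - \sgn(t)\frac{i\pi}{8} - \sum_{n=1}^{N'-1}\left(\frac{2i}{t}\right)^n\left[\frac{B_{n+1}(1/4) + (-1)^{n+1} B_{n+1}(1/4)}{2n(n+1)}\right] + O\!\left(\frac{1}{|t|^{N'}}\right),
\end{equation*}
with $N'$ to be chosen. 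Dividing through by $i$ (equivalently, multiplying by $-i$) immediately converts the three leading terms into the asserted $(t/2)\log(|t|/2\pi) - t/2 - \sgn(t)\pi/8$.

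The bracketed factor is $B_{n+1}(1/4)[1+(-1)^{n+1}]/(2n(n+1))$, which vanishes for even $n$ and equals $B_{n+1}(1/4)/(n(n+1))$ for odd $n$. So only odd indices $n = 2k-1$ survive. Setting $n = 2k-1$, one computes
\begin{equation*}
   \left(\frac{2i}{t}\right)^{2k-1} = \frac{2i\,(-4)^{k-1}}{t^{2k-1}}, \qquad \frac{B_{2k}(1/4)}{(2k-1)(2k)} = \frac{B_{2k}(1/4)}{2(2k-1)k},
\end{equation*}
so the contribution of the $n=2k-1$ term to $i\vartheta(1/2+it)$ is $-i(-4)^{k-1}B_{2k}(1/4)/((2k-1)k\, t^{2k-1})$. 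Multiplying by $-i$ converts $-i$ to $-1$, giving exactly $-(-4)^{k-1}B_{2k}(1/4)/((2k-1)k\, t^{2k-1})$, which matches the summand in the statement.

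Finally I would choose $N' = 2N-1$ in Corollary \ref{rsthe}, which makes $n$ range from $1$ to $2N-2$, so the highest surviving odd index is $n = 2N-3$, i.e., $k = N-1$. The error from Corollary \ref{rsthe} is then $O(1/|t|^{2N-1})$, matching the stated bound. There is no analytical obstacle here—the proof is a clean index and sign bookkeeping exercise; the only thing to verify carefully is the identification of $(2i)^{2k-1} = 2i(-4)^{k-1}$ and the simplification $(2k-1)(2k) = 2(2k-1)k$ that produces the $(2k-1)n$ denominator in the stated form.
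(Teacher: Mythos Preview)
Your proposal is correct and matches the paper's approach: the paper presents this corollary immediately after Corollary \ref{rsthe} without a separate proof, so the intended argument is exactly the specialization $\sigma=1/2$ you carry out. Your bookkeeping of $(2i)^{2k-1}=2i(-4)^{k-1}$, the vanishing for even $n$, and the choice $N'=2N-1$ are all correct.
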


Corollary \ref{rsthe2} agrees with \cite[Satz 4.2.3(b)]{Ga79} as $B_{2n}(1/4)=2^{-2n}(2^{1-2n}-1)(-1)^{n+1}|B_{2n}|$.

\subsection{Bell polynomials}

The Bell polynomials give a convenient and explicit way to express the series coefficients we need. They complement the methods of Riemann and Siegel based on generating functions and recursions.

Let $p_1, p_2, p_3, \dots$ be any sequence of complex numbers.
The {\em partial ordinary Bell polynomials} $\hat{\mathcal B}_{i,j}$ give the coefficients of powers of the formal  series $p_1 x +p_2 x^2+ p_3 x^3+ \cdots $. With $j \in \Z_{\gqs 0}$,  the generating function definition is given by
\begin{equation} \label{pobell2}
    \left( p_1 x +p_2 x^2+ p_3 x^3+ \cdots \right)^j = \sum_{i=j}^\infty \hat{\mathcal B}_{i,j}(p_1, p_2, p_3, \dots) x^i.
\end{equation}
Clearly $\hat{\mathcal B}_{i,0}(p_1, p_2, p_3,  \dots) = \delta_{i,0}$.
We have the formulas
\begin{equation} \label{pobell}
    \hat{\mathcal B}_{i,j}(p_1, p_2, p_3, \dots) = \sum_{\substack{1\ell_1+2 \ell_2+ 3\ell_3+\dots = i \\ \ell_1+ \ell_2+ \ell_3+\dots = j}}
    \frac{j!}{\ell_1! \ell_2! \ell_3! \cdots } p_1^{\ell_1} p_2^{\ell_2} p_3^{\ell_3} \cdots
\end{equation}
from \cite[Sect. 3.3]{Com} where the sum is over all possible $\ell_1$, $\ell_2$, $\ell_3, \dots \in \Z_{\gqs 0}$, or
\begin{equation} \label{pobell3}
    \hat{\mathcal B}_{i,j}(p_1, p_2, p_3, \dots)= \sum_{n_1+n_2+\dots + n_j = i}
    p_{n_1}p_{n_2} \cdots p_{n_j} \qquad \qquad (j \gqs 1)
\end{equation}
 where the sum is over all possible $n_1$, $n_2,  \dots \in \Z_{\gqs 1}$. See the discussion and references in \cite[Sect. 7]{OSper} for how these Bell polynomials are used in the saddle-point method.

For $j \gqs 1$ we see from \e{pobell3} that $\hat{\mathcal B}_{i,j}(p_1, p_2, p_3, \dots)$ is a polynomial in $p_1, p_2, \dots, p_{i-j+1}$ of homogeneous degree $j$ with positive integer coefficients. For instance,
\begin{equation*}
  \hat{\mathcal B}_{8,4}(p_1, p_2, p_3, \dots) = p_2^4+12 p_1 p_2^2 p_3 + 6 p_1^2 p_3^2
+12 p_1^2 p_2 p_4 +4 p_1^3 p_5.
\end{equation*}
These polynomials may be expressed in terms of the closely related {\em partial  Bell polynomials} which are included in the  Mathematica system, for example. For a simple bound (which we will not need), note that
\begin{equation*}
  \hat{\mathcal B}_{i,j}(1,1,1, \dots)= \binom{i-1}{j-1} \qquad \qquad (j \gqs 1)
\end{equation*}
and hence, if $|p_n|\lqs Q$ for all $n\in \Z_{\gqs 1}$, then
\begin{equation*}
  \left|\hat{\mathcal B}_{i,j}(p_1, p_2, p_3,  \dots)\right| \lqs  \binom{i-1}{j-1}Q^j \qquad \qquad (j \gqs 1).
\end{equation*}

Based on the terms in Corollary \ref{rsthe} we make the definitions
\begin{align}
  f_n(\sigma) & :=\frac{B_{n+1}(\sigma/2)+(-1)^{n+1} B_{n+1}((1-\sigma)/2)}{2n(n+1)}, \label{fns}\\
 u_m(\sigma) & := (-2)^m \sum_{k=0}^m \frac 1{k!} \hat{\mathcal B}_{m,k}(f_1(\sigma),f_2(\sigma),\dots). \label{ums}
\end{align}
Then $f_n(\sigma)$ is a polynomial of degree $n+1$ with rational coefficients.
Hence,  $\hat{\mathcal B}_{m,k}(f_1(\sigma),f_2(\sigma),\dots)$ has degree at most $m+k$ by \e{pobell3}. It follows that $u_m(\sigma)$ is a polynomial  with rational coefficients. Its degree is exactly $2m$ since it may be checked that the coefficient of $u^{2m}$ in $u_m(\sigma)$ is $(-1)^m/(4^m m!)$. We have for example $u_0(\sigma)=1$ and
\begin{equation*}
   u_1(\sigma)=(-1+6\sigma-6\sigma^2)/24, \qquad u_2(\sigma)=(1 + 36 \sigma - 96 \sigma^2 + 24 \sigma^3 + 36 \sigma^4)/1152.
\end{equation*}
The next result requires the finite version of \e{pobell2}:
\begin{equation} \label{bellf}
    \left( p_1 x +p_2 x^2+ \cdots +p_r x^r\right)^j = \sum_{i=j}^{r j} \hat{\mathcal B}_{i,j}(p_1, p_2, \dots ,p_r,0,0, \dots) x^i.
\end{equation}

\begin{theorem} \label{rsu2}
Suppose $s\in \C$ satisfies $\sigma \in I$ and $t\gqs \epsilon >0$. Then
\begin{equation} \label{umexp}
  \exp\left( \left( \frac s2-\frac 14\right) \log \frac t{2\pi} -\frac{i t}2 -\frac{i \pi}8 - i \vartheta(s)\right) = \sum_{m=0}^{L-1} \frac{u_m(\sigma)}{(i t)^m}  +O\left( \frac 1{t^{L}}\right)
\end{equation}
for an implied constant depending only on $I$, $\epsilon$  and $L\in \Z_{\gqs 0}$.
\end{theorem}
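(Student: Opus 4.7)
The plan is to start from the expansion of $i\vartheta(s)$ given by Corollary \ref{rsthe}, exponentiate, and identify the resulting coefficients using Bell polynomials.

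First, write $Y := (s/2-1/4)\log(t/(2\pi)) - it/2 - i\pi/8 - i\vartheta(s)$. Applying Corollary \ref{rsthe} with $N=L$, and using $\sgn(t)=+1$, $|t|=t$ for $t \gqs \epsilon > 0$, gives $Y = P + E$ where
\begin{equation*}
  P := \sum_{n=1}^{L-1} \left(\frac{2i}{t}\right)^n f_n(\sigma), \qquad E = O\left(\frac{1}{t^L}\right),
\end{equation*}
with implied constants depending only on $I$ and $L$. For $t \gqs \epsilon$ and $\sigma \in I$, both $|P|$ and $|E|$ are bounded, so the inequality $|e^z-1| \lqs |z|e^{|z|}$ gives $e^E = 1 + O(1/t^L)$, while $|e^P|$ is bounded. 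Hence $e^Y = e^P + O(1/t^L)$, reducing the problem to expanding $e^P$.

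Next I would compute the coefficients of $e^P$ as a function of $w := 1/t$. Since $P(w)$ is a polynomial with $P(0)=0$, $e^{P(w)}$ is entire in $w$, and the identity $e^{P(w)} = \sum_{k=0}^\infty P(w)^k/k!$ is a genuinely convergent expansion. Applying \e{pobell2} with $p_n = f_n(\sigma)(2i)^n$ (truncated at $n=L-1$), together with the scaling identity $\hat{\mathcal B}_{m,k}(f_1 c, f_2 c^2, \ldots) = c^m \hat{\mathcal B}_{m,k}(f_1, f_2, \ldots)$ that follows immediately from \e{pobell}, shows that $P(w)^k$ contributes $(2i)^m \hat{\mathcal B}_{m,k}(f_1, f_2, \ldots, f_{L-1}, 0, \ldots)\, w^m$ at order $w^m$. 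The key observation is that for $m < L$ the truncation at $L-1$ is vacuous, since by \e{pobell3} only $f_{n_i}$ with $n_i \lqs m - k + 1 < L$ appear. Therefore the coefficient of $w^m$ in $e^{P(w)}$, for $0 \lqs m < L$, equals
\begin{equation*}
  (2i)^m \sum_{k=0}^m \frac{\hat{\mathcal B}_{m,k}(f_1, f_2, \ldots)}{k!} = \frac{(2i)^m}{(-2)^m}\, u_m(\sigma) = \frac{u_m(\sigma)}{(it)^m},
\end{equation*}
using the definition \e{ums} and the identity $(2i)/(-2) = -i = 1/i$.

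Finally, the Taylor remainder of the entire function $e^{P(w)}$ at $w=0$ past order $L-1$ satisfies $R(w) = O(w^L)$ uniformly for $|w| \lqs 1/\epsilon$ and $\sigma \in I$, by Cauchy's estimate on a fixed circle of radius, say, $2/\epsilon$ around $0$; the required sup of $|e^{P(z)}|$ on that circle is bounded in terms of $I$, $\epsilon$, $L$ because $P$ has bounded coefficients. Combining the three steps yields \e{umexp}. There is no single hard step: the calculation is largely bookkeeping, and the one place to be careful is the justification that the formal series coefficients of $e^{P(w)}$ agree with its analytic Taylor coefficients, which is automatic since $P$ is a polynomial and hence $e^{P(w)}$ entire.
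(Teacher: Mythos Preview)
Your proof is correct and follows essentially the same route as the paper: apply Corollary~\ref{rsthe} to write the exponent as $P+E$ with $E=O(t^{-L})$, reduce to expanding $e^P$, and identify the Taylor coefficients via Bell polynomials, noting the truncation at $f_{L-1}$ is vacuous for $m<L$. The only differences are cosmetic: the paper bounds the tail of $e^P$ by truncating the exponential series $e^z=\sum_{k<K}z^k/k!+O(|z|^K)$ and then expanding each $P^k$ with \e{bellf}, whereas you treat $e^{P(w)}$ as entire in $w=1/t$ and invoke Cauchy's estimate---and your displayed equation slightly conflates ``coefficient of $w^m$'' with ``the $m$-th term $u_m(\sigma)/(it)^m$,'' though the computation is right.
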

\begin{proof}
We first note that for all $z\in \C$ with $|z|\lqs T$, and with an implied constant depending only on $K\in \Z_{\gqs 0}$ and $T$, we have
\begin{equation}\label{exp}
  e^z=\sum_{k=0}^{K-1} \frac{z^k}{k!} + O\left(|z|^K\right)
\end{equation}
by Taylor's Theorem with the usual remainder estimates.
Choose any $N\gqs 1$ and set
\begin{equation*}
  z_t:=  \left( \frac s2-\frac 14\right) \log \frac t{2\pi} -\frac{i t}2 -\frac{i \pi}8 - i \vartheta(s)
-\sum_{n=1}^{N-1} \left(\frac{2i}t \right)^n f_n(\sigma)
\end{equation*}
for any $s$ satisfying the conditions of the theorem. Then by Corollary \ref{rsthe} there is a constant $C_{I,N}$ so that
\begin{equation*}
  |z_t| \lqs C_{I,N}/t^N \lqs C_{I,N}/\epsilon^N.
\end{equation*}
Using \e{exp} with $T=C_{I,N}/\epsilon^N$ and $K=1$ we obtain $e^{z_t} = 1 +O(1/t^N)$, so that
\begin{align}
  \exp\left( \left( \frac s2-\frac 14\right) \log \frac t{2\pi} -\frac{i t}2 -\frac{i \pi}8 - i \vartheta(s)\right)
& = \exp\left( \sum_{n=1}^{N-1} \left(\frac{2i}t \right)^n f_n(\sigma) \right)\left( 1 +O\left(\frac 1{t^N}\right)\right)\label{notag}\\
& = \exp\left( \sum_{n=1}^{N-1} \left(\frac{2i}t \right)^n f_n(\sigma) \right) +O\left(\frac 1{t^N}\right). \label{sing}
\end{align}
When $N=1$ we mean $1+O(1/t)$ on the right of \e{notag} and \e{sing}.
For $N\gqs 2$, \e{sing} follows from \e{notag} by using that
\begin{equation} \label{exp2}
  \sum_{n=1}^{N-1} \left(\frac{2i}t \right)^n f_n(\sigma) = O\left(\frac 1{t}\right) \quad \implies \quad \exp\left( \sum_{n=1}^{N-1} \left(\frac{2i}t \right)^n f_n(\sigma) \right) = O\left(1\right).
\end{equation}
It is also true by \e{exp} and the left bound in \e{exp2} that
\begin{equation} \label{exp3}
  \exp\left( \sum_{n=1}^{N-1} \left(\frac{2i}t \right)^n f_n(\sigma) \right) = \sum_{k=0}^{K-1} \frac{1}{k!} \left( \sum_{n=1}^{N-1} \left(\frac{2i}t \right)^n f_n(\sigma) \right)^k +O\left(\frac 1{t^K}\right).
\end{equation}
By \e{bellf}, the sum on the right is
\begin{multline} \label{hum}
  \sum_{k=0}^{K-1} \frac{1}{k!} \left( \sum_{n=1}^{N-1} \left(\frac{2i}t \right)^n f_n(\sigma) \right)^k
= \sum_{k=0}^{K-1} \frac{1}{k!} \sum_{m=k}^{(N-1)k} \hat{\mathcal B}_{m,k}(f_1(\sigma),  \dots ,f_{N-1}(\sigma),0,0, \dots) \left(\frac{2i}t \right)^m \\
    = \sum_{m=0}^{(N-1)(K-1)}  \left(\frac{-2}{i t} \right)^m \times
\sum_{k=0}^{\min(m,K-1)} \frac{1}{k!} \hat{\mathcal B}_{m,k}(f_1(\sigma),  \dots ,f_{N-1}(\sigma),0,0, \dots).
\end{multline}
Recall that $\hat{\mathcal B}_{m,k}(f_1(\sigma),  \dots ,f_{N-1}(\sigma),0,0, \dots)$ just requires the first $m-k+1$ terms of the sequence $f_1(\sigma),  \dots ,f_{N-1}(\sigma),0,0, \dots$ and so will not use the $0$ terms if $m-k+1\lqs N-1$. Therefore we may write \e{hum} as
\begin{equation} \label{exp4}
  \sum_{m=0}^{N-2}  \left(\frac{-2}{i t} \right)^m \times
\sum_{k=0}^{m} \frac{1}{k!} \hat{\mathcal B}_{m,k}(f_1(\sigma), f_2(\sigma), \dots ) +O\left(\frac 1{t^{N-1}}\right)
\end{equation}
if $N-2\lqs K-1$. Assembling \e{sing}, \e{exp3} and \e{exp4} yields
\begin{multline*}
  \exp\left( \left( \frac s2-\frac 14\right) \log \frac t{2\pi} -\frac{i t}2 -\frac{i \pi}8 - i \vartheta(s)\right)\\
 = \sum_{m=0}^{N-2}  \left(\frac{-2}{i t} \right)^m \times
\sum_{k=0}^{m} \frac{1}{k!} \hat{\mathcal B}_{m,k}(f_1(\sigma), f_2(\sigma), \dots ) +O\left(\frac 1{t^{N-1}}\right) +O\left(\frac 1{t^N}\right) +O\left(\frac 1{t^K}\right)
\end{multline*}
for $K = N-1$. Letting $L=N-1$ in \e{umexp} completes the proof.
\end{proof}

\begin{cor} \label{thet}
Suppose $s\in \C$ satisfies $\sigma \in I$ and $t\gqs \epsilon >0$. Then
\begin{equation} \label{et}
  \exp\left( i \vartheta(s)\right) = O\left( t^{\sigma/2-1/4}\right), \qquad \exp\left(  - i \vartheta(s)\right) = O\left( t^{-\sigma/2+1/4}\right)
\end{equation}
for implied constants depending only on $I$   and $\epsilon$.
\end{cor}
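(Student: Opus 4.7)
The plan is to deduce both bounds from Theorem \ref{rsu2} with $L=1$. Since $u_0(\sigma)=1$, that theorem yields, for $\sigma \in I$ and $t\gqs \epsilon$,
\begin{equation*}
  \exp\left( \left(\frac s2-\frac 14\right) \log \frac t{2\pi} -\frac{it}2 -\frac{i\pi}8 - i \vartheta(s)\right) = 1 + O(1/t).
\end{equation*}
The real part of the exponent on the left is $(\sigma/2-1/4)\log(t/2\pi) + \Im \vartheta(s)$, because $-it/2$, $-i\pi/8$ and the piece $(it/2)\log(t/2\pi)$ of $(s/2-1/4)\log(t/2\pi)$ are all purely imaginary. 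Taking absolute values and using $|e^{-i\vartheta(s)}|=e^{\Im \vartheta(s)}$, one obtains
\begin{equation*}
  (t/2\pi)^{\sigma/2-1/4}\, |e^{-i\vartheta(s)}| \ = \ |1 + O(1/t)|.
\end{equation*}
The right-hand side is bounded above by a constant depending only on $I$ and $\epsilon$ for $t\gqs\epsilon$, and rearranging gives the second bound $|e^{-i\vartheta(s)}|=O(t^{-\sigma/2+1/4})$.

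For the first bound I would avoid inverting the relation directly, since the factor $1+O(1/t)$ is not a priori bounded away from zero uniformly in $t\gqs\epsilon$. Instead, I would use the symmetries \e{thsym}: from $\vartheta(1-s)=-\vartheta(s)$ and $\overline{\vartheta(s)}=-\vartheta(\overline{s})$ one deduces $\vartheta(1-\overline{s})=\overline{\vartheta(s)}$, whence $e^{-i\vartheta(1-\overline{s})}=\overline{e^{i\vartheta(s)}}$ and therefore
\begin{equation*}
  |e^{i\vartheta(s)}| = |e^{-i\vartheta(s')}|, \qquad s':=1-\overline{s}.
\end{equation*}
The point $s'$ has real part $1-\sigma$, which lies in the finite interval $1-I$, and imaginary part $t\gqs\epsilon$. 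Applying the already established second bound to $s'$ gives
\begin{equation*}
  |e^{i\vartheta(s)}|=O\bigl(t^{-(1-\sigma)/2+1/4}\bigr)=O\bigl(t^{\sigma/2-1/4}\bigr),
\end{equation*}
with an implied constant depending only on $I$ and $\epsilon$, completing the proof.

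There is no serious obstacle here; the corollary is essentially a direct extraction from Theorem \ref{rsu2}. The only mildly subtle issue is that a naive reciprocal argument for the first bound would require lower bounds on $|1+O(1/t)|$ that are not automatic for $t$ close to $\epsilon$. The reflection $s\mapsto 1-\overline{s}$, which preserves the strip structure and swaps the roles of $\sigma$ and $1-\sigma$, sidesteps this issue cleanly.
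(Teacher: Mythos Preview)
Your proof is correct. Both you and the paper start from Theorem \ref{rsu2} with $L=1$, and your derivation of the second bound by taking absolute values is essentially what the paper means by ``manipulating \e{rrr}.'' The difference is in the first bound: the paper simply inverts \e{rrr} to obtain \e{rrr2}, writing that ``the reciprocal of the left side has the same bound,'' and then multiplies through by $\exp((s/2-1/4)\log(t/2\pi)-it/2-i\pi/8)$. Your concern that $1+O(1/t)$ is not a priori bounded away from zero for $t$ near $\epsilon$ is legitimate; the paper's step can still be justified, since for $t$ larger than twice the implied constant the reciprocal is automatic, while for $t$ in the remaining compact range $[\epsilon,2C]$ one can bound $|e^{-i\vartheta(s)}|$ directly by continuity of $\chi(s)$ on a compact set avoiding its poles and zeros. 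Your use of the reflection $s\mapsto 1-\overline{s}$ via \e{thsym} sidesteps this entirely and is a clean alternative: it trades the reciprocal argument for an application of the already-proved second bound on the reflected strip $1-I$, at no real cost.
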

\begin{proof}
By Theorem \ref{rsu2} with $L=1$,
\begin{equation} \label{rrr}
  \exp\left( \left( \frac s2-\frac 14\right) \log \frac t{2\pi} -\frac{i t}2 -\frac{i \pi}8 - i \vartheta(s)\right) = 1  +O\left( \frac 1{t}\right).
\end{equation}
It follows simply that the reciprocal of the left side has the same bound:
\begin{equation} \label{rrr2}
  \exp\left( -\left( \frac s2-\frac 14\right) \log \frac t{2\pi} +\frac{i t}2 +\frac{i \pi}8 + i \vartheta(s)\right) = 1  +O\left( \frac 1{t}\right).
\end{equation}
Multiplying both sides of \e{rrr2} by $\exp\left( \left( \frac s2-\frac 14\right) \log \frac t{2\pi} -\frac{i t}2 -\frac{i \pi}8\right)$ and bounding gives the left estimate in \e{et}. The right estimate is similar, manipulating \e{rrr}.
\end{proof}

It should be possible to replace the restriction $t\gqs \epsilon>0$ in Theorem \ref{rsu2} and Corollary \ref{thet} with just $t>0$. This would require a more careful treatment in Proposition \ref{logg} when $|t|<1$. Our applications will only require $t\gqs 2\pi$ in any case.

Set
\begin{equation*}
  g_n(\sigma):=-\frac{B_{n+1}(\sigma)}{n(n+1)} \qquad \text{and} \qquad
 \g_m(\sigma)  :=  i^m \sum_{k=0}^m \frac 1{k!} \hat{\mathcal B}_{m,k}(g_1(\sigma),g_2(\sigma),\dots)
\end{equation*}
so that, for instance, $\g_0(\sigma) = 1$ and
\begin{equation*}
   \g_1(\sigma) = (-1 + 6 \sigma - 6 \sigma^2)i/12, \qquad \g_2(\sigma) =  (-1 + 36 \sigma - 120 \sigma^2 + 120 \sigma^3 - 36 \sigma^4)/288.
\end{equation*}
Then a similar proof to that of Theorem \ref{rsu2}, using Proposition \ref{logg}, gives the asymptotics of the $\G$ function in vertical strips:
\begin{prop} \label{stir2}
Suppose $s\in \C$ satisfies $\sigma \in I$ and $t\gqs \epsilon >0$. Then
\begin{equation*} 
  \G(s) = \sqrt{2\pi}  \exp\left( \frac{\pi i s}{2}-i t -\frac{\pi i}{4}\right) t^{s-1/2} \left(\sum_{m=0}^{L-1} \frac{\g_m(\sigma)}{t^m}  +O\left( \frac 1{t^{L}}\right)\right)
\end{equation*}
for an implied constant depending only on $I$, $\epsilon$  and $L\in \Z_{\gqs 0}$.
\end{prop}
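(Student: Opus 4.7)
The plan is to mirror the proof of Theorem \ref{rsu2} line by line, starting from Proposition \ref{logg} in place of Corollary \ref{rsthe}. Since $t\gqs \epsilon>0$ we may write $\log(it)=\log t+i\pi/2$, so the main term in Proposition \ref{logg} rewrites as
\begin{equation*}
  \left(s-\tfrac 12\right)\log(it) - it +\tfrac 12\log 2\pi = (s-\tfrac 12)\log t + \tfrac{\pi i s}{2} - \tfrac{\pi i}{4} - it + \log\sqrt{2\pi}.
\end{equation*}
Setting $g_k(\sigma) = -B_{k+1}(\sigma)/(k(k+1))$ as before the statement and exponentiating the conclusion of Proposition \ref{logg} yields
\begin{equation*}
  \G(s) = \sqrt{2\pi}\exp\left(\tfrac{\pi i s}{2}-it-\tfrac{\pi i}{4}\right) t^{s-1/2} \exp\left(\sum_{k=1}^{N-1}\left(\tfrac{i}{t}\right)^k g_k(\sigma) + O\left(\tfrac{1}{t^N}\right)\right).
\end{equation*}

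The remaining step is purely the Bell-polynomial manipulation from the proof of Theorem \ref{rsu2}. The argument $\sum_{k=1}^{N-1}(i/t)^k g_k(\sigma)$ is $O(1/t)$, so by a single application of $e^z=1+O(|z|)$ on the added $O(1/t^N)$ term the outer exponential factors cleanly into $\exp\bigl(\sum_{k=1}^{N-1}(i/t)^k g_k(\sigma)\bigr)+O(1/t^N)$. A finite Taylor expansion of $e^z$ together with the finite multinomial identity \e{bellf} then recasts $\exp\bigl(\sum_{k=1}^{N-1}(i/t)^k g_k(\sigma)\bigr)$ as
\begin{equation*}
  \sum_{m=0}^{(N-1)(K-1)} \left(\frac{i}{t}\right)^m \sum_{k=0}^{\min(m,K-1)} \frac{1}{k!}\hat{\mathcal B}_{m,k}\bigl(g_1(\sigma),\dots,g_{N-1}(\sigma),0,0,\dots\bigr) + O\left(\tfrac{1}{t^K}\right).
\end{equation*}
For indices with $m-k+1\lqs N-1$ the Bell polynomial does not see the zero padding and the inner sum is precisely $\g_m(\sigma)/i^m$; choosing $N$ and $K$ sufficiently large in terms of the desired $L$ produces the claimed expansion with error $O(1/t^L)$.

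No genuinely new ideas are required. The two points needing care are the rewriting of $\log(it)$ (which cleanly separates the prefactors $t^{s-1/2}$ and $\exp(\pi i s/2-it-\pi i/4)$ and uses $t\gqs \epsilon$), and the routine index bookkeeping between the truncation orders $N$, $K$ and $L$, handled exactly as in the passage \e{hum}--\e{exp4} of Theorem \ref{rsu2}.
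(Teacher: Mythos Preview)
Your proposal is correct and follows exactly the approach the paper indicates: the paper simply states that ``a similar proof to that of Theorem \ref{rsu2}, using Proposition \ref{logg}'' gives the result, and you have carried out precisely that, including the correct rewriting of $\log(it)$ and the Bell-polynomial bookkeeping parallel to \e{hum}--\e{exp4}.
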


The power series coefficients $a_k(s)$ of $w(z,s)$ in \e{wzsd} and \e{wzs} may also be expressed in terms of Bell polynomials. For this we will need the identity
\begin{equation} \label{jude}
    \exp\left(u(p_1 x+p_2 x^2+\dots)\right) = \sum_{n=0}^\infty x^{n} \sum_{k=0}^n \hat{\mathcal B}_{n,k}(p_1,p_2,\dots) \frac{u^k}{k!}.
\end{equation}
 Define
\begin{equation} \label{dmr}
  d_{m,r}(\sigma) := \sum_{n=r}^m \frac{\hat{\mathcal B}_{n,r}(\frac 13,-\frac 14,\frac 15,\dots)}{r!} \sum_{k=0}^{m-n} \frac{\hat{\mathcal B}_{m-n,k}(1,-\frac 12,\frac 13,\dots)}{k!}(\sigma-1)^k.
\end{equation}

\begin{prop} \label{akbell}
For all $k\in \Z_{\gqs 0}$ we have
\begin{equation} \label{akexp}
  a_k(s)=  \sum_{r=0}^{\lfloor k/3\rfloor} i^r  \cdot d_{k-2r,r}(\sigma)  \cdot  t^{r-k/2}.
\end{equation}
\end{prop}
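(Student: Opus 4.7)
The plan is to apply the exponential generating function identity \eqref{jude} to each factor of a clean product decomposition of $w(z,s)$. First, I would use the expansion $\log(1+z/\sqrt{t}) = \sum_{j\geq 1} (-1)^{j-1} z^j/(j \cdot t^{j/2})$ together with the splitting $s-1 = (\sigma-1)+it$ in the exponent of \eqref{wzsd}. The key point is that the contributions to the linear and quadratic terms in $z$ coming from the $it$ part exactly cancel $-i\sqrt{t}\,z + iz^2/2$, leaving
\begin{equation*}
  w(z,s) = \left(1+\frac{z}{\sqrt{t}}\right)^{\sigma-1} \exp\!\left(i\sum_{j=1}^\infty \frac{(-1)^{j-1}}{j+2}\,\frac{z^{j+2}}{t^{j/2}}\right),
\end{equation*}
which is precisely the identity for $\log w$ already recorded at the start of the proof of Lemma \ref{rnzs}.

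Next, I would apply \eqref{jude} to each factor. The first factor, with $u=\sigma-1$, $p_j=(-1)^{j-1}/j$ and $x=z/\sqrt{t}$, yields
\begin{equation*}
  \left(1+\frac{z}{\sqrt{t}}\right)^{\sigma-1} = \sum_{M=0}^\infty \frac{z^M}{t^{M/2}}\sum_{l=0}^M \frac{\hat{\mathcal B}_{M,l}(1,-1/2,1/3,\dots)}{l!}(\sigma-1)^l.
\end{equation*}
For the second factor I apply \eqref{jude} with $u=1$ and coefficients $p_n$ (of $z^n$) that vanish for $n\leq 2$ and equal $i(-1)^{n-1}t^{-(n-2)/2}/n$ for $n\geq 3$. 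By the sum formula \eqref{pobell3}, the Bell polynomial $\hat{\mathcal B}_{N,r}(0,0,p_3,p_4,\dots)$ is supported on partitions $n_1+\cdots+n_r=N$ with parts $n_i\geq 3$, so the shift $n_i\mapsto n_i-2$ gives
\begin{equation*}
  \hat{\mathcal B}_{N,r}(0,0,p_3,p_4,\dots) = \hat{\mathcal B}_{N-2r,r}(p_3,p_4,\dots)\qquad(r\geq 1,\ N\geq 2r),
\end{equation*}
and since each $p_{j+2}=i\cdot t^{-j/2}\cdot(-1)^{j-1}/(j+2)$ contributes one factor of $i$ and a $t$-power whose exponents sum to $-(N-2r)/2$ independently of the partition, the homogeneity of \eqref{pobell3} extracts
\begin{equation*}
  \hat{\mathcal B}_{N-2r,r}(p_3,p_4,\dots) = i^r\cdot t^{-(N-2r)/2}\cdot \hat{\mathcal B}_{N-2r,r}(1/3,-1/4,1/5,\dots).
\end{equation*}

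Finally I would multiply the two series, read off the coefficient of $z^K$, and reindex via $m=K-2r$ and $n=N-2r$. Collecting the $t$-powers gives $t^{-M/2}\cdot t^{-(N-2r)/2}=t^{r-K/2}$, and the remaining double sum over partial Bell polynomials is exactly the definition \eqref{dmr} of $d_{m,r}(\sigma)$ (the inner sum over $n$ may be started at $n=0$ since $\hat{\mathcal B}_{n,r}=0$ for $n<r$). This gives $a_K(s)=\sum_r i^r d_{K-2r,r}(\sigma)\,t^{r-K/2}$ with $r$ a priori ranging from $0$ to $\lfloor K/2\rfloor$; however, once $r>\lfloor K/3\rfloor$ we have $K-2r<r$, so $d_{K-2r,r}$ is an empty sum and vanishes, which truncates the range to $r\leq\lfloor K/3\rfloor$ and reproduces \eqref{akexp}. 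The main (mild) obstacle is the bookkeeping: the three index shifts — the vanishing of $p_1,p_2$, the homogeneity extraction of $i^r t^{-(N-2r)/2}$, and the relabeling $m=K-2r$ — must all be coordinated so that the emerging double Bell polynomial sum is recognized as $d_{m,r}(\sigma)$.
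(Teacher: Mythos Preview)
Your proof is correct and follows essentially the same approach as the paper: factor $w(z,s)$ into the $(\sigma-1)$-piece and the cubic-and-higher piece, apply \eqref{jude} to each, multiply, and reindex to recognize $d_{m,r}(\sigma)$. The only difference is in handling the second factor: the paper applies \eqref{jude} with $u=iz^2$ and $x=z/\sqrt t$, so the Bell polynomials $\hat{\mathcal B}_{m,r}(\tfrac13,-\tfrac14,\dots)$ and the factor $(iz^2)^r$ appear immediately, whereas you take $u=1$, $x=z$ and then invoke the shift identity $\hat{\mathcal B}_{N,r}(0,0,p_3,\dots)=\hat{\mathcal B}_{N-2r,r}(p_3,p_4,\dots)$ together with homogeneity to extract $i^r t^{-(N-2r)/2}$; this is a slightly longer but equally valid route to the same expression.
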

\begin{proof}
Expanding the logarithm in  \e{wzsd} into its power series and then employing \e{jude} produces
\begin{multline*}
  w(z,s) = \exp\left( (\sigma-1) \sum_{j=1}^\infty \frac{(-1)^{j+1}}{j} \left(\frac z{\sqrt t}\right)^j \right) \exp\left( i z^2  \sum_{j=1}^\infty \frac{(-1)^{j+1}}{j+2} \left(\frac z{\sqrt t}\right)^j \right) \\
  = \left( \sum_{n=0}^\infty \left( \frac{z}{\sqrt t}\right)^n \sum_{k=0}^n \hat{\mathcal B}_{n,k}( 1,{\textstyle -\frac 12}, {\textstyle \frac 13},\dots) \frac{(\sigma-1)^k}{k!}\right)
\left( \sum_{m=0}^\infty \left( \frac{z}{\sqrt t}\right)^m \sum_{r=0}^m \hat{\mathcal B}_{m,r}({\textstyle \frac 13},{\textstyle -\frac 14}, {\textstyle \frac 15},\dots) \frac{(i z^2)^r}{r!}\right)\\
=
\sum_{h=0}^\infty \left( \frac{z}{\sqrt t}\right)^h \sum_{r=0}^h (i z^2)^r \sum_{m=r}^h \frac{\hat{\mathcal B}_{m,r}(\frac 13,-\frac 14,\frac 15,\dots)}{r!} \sum_{k=0}^{h-m} \frac{\hat{\mathcal B}_{h-m,k}(1,-\frac 12,\frac 13,\dots)}{k!}(\sigma-1)^k.
\end{multline*}
Therefore
\begin{align*}
  w(z,s)  & = \sum_{h=0}^\infty t^{-h/2} \sum_{r=0}^h i^r  \cdot d_{h,r}(\sigma) \cdot  z^{h+2r} \\
   & = \sum_{k=0}^\infty z^k \sum_{r=0}^{\lfloor k/3\rfloor} i^r  \cdot d_{k-2r,r}(\sigma)  \cdot  t^{r-k/2}.
\qedhere
\end{align*}
\end{proof}

For example $a_0(s)=1$, $a_1(s)=(\sigma-1)/t^{1/2}$ and
\begin{equation*}
   a_2(s)=(\sigma^2-3\sigma+2)/(2t), \qquad a_3(s)=(\sigma^3-6\sigma^2+11\sigma+2i t -6)/(6t^{3/2}).
\end{equation*}
Siegel gave $a_k(s)$ in terms of the recursion \e{akrec}. The advantage of Proposition \ref{akbell} is that it gives explicit formulas for the coefficients of the powers of $t$ in $a_k(s)$. These formulas will be needed in the next section.

\section{Proof of most parts of the main theorem}

Our goal in this section is the next result.

\begin{theorem} \label{almost}
Recall the statement of Theorem \ref{unsym-rs}. This statement, with the change that the implied constant in \e{rrss} may also depend on  $\lambda$ when $\lambda <1$, is true.
\end{theorem}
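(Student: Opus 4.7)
My plan is to derive the theorem from Theorem \ref{hat} by multiplying through by $e^{i\vartheta(s)}$ and then reorganizing the resulting expansion as a single series in descending half-powers of $t$.

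First, I apply Theorem \ref{hat} with its truncation index $N$ replaced by $3N$, and multiply both sides by $e^{i\vartheta(s)}$. Since $\chi(s) = e^{-2i\vartheta(s)}$ yields $e^{i\vartheta(s)}\chi(s) = e^{-i\vartheta(s)} = e^{i\vartheta(1-s)}$, the partial-sum terms combine into $R(s;\alpha,\beta)$ on the left. The reflection formula $\G(s)\G(1-s) = \pi/\sin(\pi s)$ together with $e^{\pi i s/2}/(e^{2\pi i s}-1) = e^{-\pi i s/2}/(2i\sin\pi s)$ gives
\[
\frac{(2\pi)^s e^{\pi i s/2}}{\G(s)(e^{2\pi i s}-1)} = \frac{\chi(s)}{e^{i\pi s}-1},
\]
and for $t\gqs 2\pi$ we have $(e^{i\pi s}-1)^{-1} = -1 + O(e^{-\pi t})$. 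Combining this with Theorem \ref{rsu2} applied to $-\exp\!\bigl((s/2-1/4)\log(t/2\pi) - it/2 - i\pi/8 - i\vartheta(s)\bigr)$ converts the entire prefactor of the main term into
\[
-\sum_{m=0}^{L-1}\frac{u_m(\sigma)}{(it)^m} + O(t^{-L}),
\]
and the sign $(-1)^{\lfloor\alpha\rfloor\lfloor\beta\rfloor}$ from Theorem \ref{hat} is flipped to $(-1)^{\lfloor\alpha\rfloor\lfloor\beta\rfloor+1}$.

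Next, I substitute the Bell-polynomial expansion $a_k(s) = \sum_{r'=0}^{\lfloor k/3\rfloor} i^{r'} d_{k-2r',r'}(\sigma)\, t^{r'-k/2}$ from Proposition \ref{akbell}. The main term of $R(s;\alpha,\beta)$ becomes a quadruple sum over $k,r,r',m$ with each term carrying a power $t^{r'-k/2-m}$. Collecting all contributions to $t^{-n/2}$ forces $k = n+2r'-2m$; the constraints $0\lqs r'\lqs\lfloor k/3\rfloor$ and $m\gqs 0$ then imply $r'\lqs n - 2m$, with extreme case $m=0$, $r'=n$, $k=3n$ yielding the maximal Hermite degree $k-r = 3n-r$ in $\omega_\lambda = e^{-\pi i/4}\sqrt{\pi/2}(a\lambda^{-1}-b\lambda)$. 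The aggregate coefficient of $G^{(r)}(a\lambda^{-1}+b\lambda;\lambda^2)/(2\pi)^{r/2}$ in the $t^{-n/2}$ piece is thus a polynomial of degree $3n-r$ in $x = \sqrt{\pi/2}(a\lambda^{-1}-b\lambda)$ and in $\sigma$, which I define to be $P_{n,3n-r}(x,\sigma)$, recovering \e{poly}.

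For the error, Corollary \ref{lamb} gives $G^{(r)}(a\lambda^{-1}+b\lambda;\lambda^2) = O(\lambda^{r+1/2}+\lambda^{-r-1/2})$, while $|\omega_\lambda| \ll \lambda+\lambda^{-1}$ yields $H_{k-r}(\omega_\lambda) = O((\lambda+\lambda^{-1})^{k-r})$, so the product $G^{(r)}H_{k-r}$ is $O((\lambda+\lambda^{-1})^{k+1/2})$. The truncation tail beyond $k = 3N$ then contributes at most $O(\lambda^{1/2-\sigma}(\lambda+\lambda^{-1})^{3N+1/2}\, t^{-N/2-1/4})$ once the factor $|\lambda^{1/2-s}| = \lambda^{1/2-\sigma}$ and the overall $(2\pi/t)^{1/4}$ are included. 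Together with the $O(t^{-L})$ remainder from Theorem \ref{rsu2} (for $L$ large enough) and the Theorem \ref{hat} remainder $O(\lambda^{1-\sigma}t^{-\sigma/2-N/2})$ magnified by $|e^{i\vartheta(s)}| = O(t^{\sigma/2-1/4})$ via Corollary \ref{thet}, this yields the claimed error, with the $\lambda$-dependence for $\lambda<1$ inherited directly from Theorem \ref{hat}.

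The main obstacle is the combinatorial identification of $P_{n,k}(x,\sigma)$ and the verification that its degree in $x$ is exactly $3n-r$. The saving grace is that only the unique extreme combination $m=0$, $r'=n$, $k=3n$ achieves the maximum Hermite degree $3n-r$; all other $(m,r',k)$ combinations contribute strictly lower-degree pieces, so $P_{n,3n-r}(x,\sigma)$ is well defined with the asserted leading degree and may be written explicitly in terms of the Bernoulli, Bell, and Hermite data already assembled.
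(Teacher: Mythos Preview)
Your proposal is correct and follows essentially the same route as the paper: multiply Theorem~\ref{hat} by $e^{i\vartheta(s)}$, simplify the prefactor via $\chi(s)=e^{-2i\vartheta(s)}$ and $(e^{\pi i s}-1)^{-1}=-1+O(e^{-\pi t})$, insert the expansions from Theorem~\ref{rsu2} and Proposition~\ref{akbell}, and then reorganize by half-powers of $t$ to read off $P_{n,k}$ and bound the tail with Corollary~\ref{lamb}. The paper carries out the index bookkeeping more explicitly (choosing $M=3N+1$, $L=\lceil N/2\rceil+1$ and isolating the tail sum in a separate lemma), but your outline matches it step for step.
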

\begin{proof}
We begin with Theorem \ref{hat} and  multiply both sides of \e{fab} by $e^{i\vartheta(s)}$.  Corollary \ref{thet}
gives the estimate $e^{i\vartheta(s)}=O(t^{\sigma/2-1/4})$. It is convenient to abbreviate the inner sum in \e{fab} as
\begin{equation} \label{ckl}
  c_k(\lambda) := \sum_{r=0}^k \binom{k}{r} G^{(r)}(a\lambda^{-1}+b \lambda  ; \lambda^2 )
  \frac{e^{\pi i(k-3r)/4}}{2^{k-r}(2\pi)^{r/2}}H_{k-r} \left(  \omega_\lambda\right).
\end{equation}
Thus we have shown the following. Let $\sigma \in I$ and $t=2\pi\alpha \beta$  for real numbers $\alpha,\beta\gqs 1$.  Then for all $N \in \Z_{\gqs 0}$ we have
\begin{multline} \label{fabr}
  R(s;\alpha,\beta)
  =
  e^{i\vartheta(s)} (-1)^{\lfloor \alpha \rfloor \lfloor \beta \rfloor} \frac {(2\pi)^s e^{\pi i s/2}}{\G(s)(e^{2\pi i s}-1)}
 \\
   \times
       \exp\left( \frac{\pi i}{2} \Bigl[2a\beta-2b\alpha+ a^2 \lambda^{-2}  - b^2 \lambda^2\Bigr]\right)
\exp\left( \left( \frac s2-\frac 14\right) \log \frac t{2\pi} -\frac{i t}2 -\frac{i \pi}8\right)
\\
  \times \left(\frac{2\pi}t\right)^{1/4}\lambda^{1/2-s} \sum_{k=0}^{N-1} a_k(s) \cdot
  c_k(\lambda)
  +O\left( \frac{\lambda^{1-\sigma}t^{-1/4}}{t^{N/6}}\right).
\end{multline}
The implied constant in \e{fabr} depends only on $I$, $N$ and $\lambda$. If $\lambda \gqs 1$ then the implied constant is independent of $\lambda$.
We may simplify the initial terms on the right of \e{fabr} by noting that
\begin{equation*}
  e^{-2i\vartheta(s)} =\chi(s) = \frac{(2\pi)^s}{2 \cos\left(\pi s/2\right) \G(s)}.
\end{equation*}
Then
\begin{align}
  e^{i\vartheta(s)}\frac {(2\pi)^s e^{\pi i s/2}}{\G(s)(e^{2\pi i s}-1)} & =
   e^{i\vartheta(s)} \frac{e^{\pi i s/2}}{(e^{2\pi i s}-1)} 2 \cos(\pi s/2) e^{-2i\vartheta(s)}  \label{usop}\\
   & = e^{-i\vartheta(s)} \frac{e^{\pi i s}+1}{e^{2\pi i s}-1}
   = \frac{ e^{-i\vartheta(s)}}{e^{\pi i s}-1}= - e^{-i\vartheta(s)}\left(1+O(e^{-\pi t}) \right). \notag
\end{align}
So replacing the left side of \e{usop} with $ - e^{-i\vartheta(s)}$ in \e{fabr}  introduces an error of size
\begin{equation}\label{teni}
  \left| e^{-i\vartheta(s)} \left(\frac t{2\pi} \right)^{(s-1)/2} \lambda^{1/2-s} \sum_{k=0}^{N-1} a_k(s) \cdot
  c_k(\lambda) \right| e^{-\pi t}.
\end{equation}
By Corollary \ref{thet}, $e^{-i\vartheta(s)} =O(t^{-\sigma/2+1/4})$.
We have $a_k(s) = O(1)$ by \e{aksbnd}.
With Corollary \ref{lamb} and the fact that $H_{n}(x)$ has degree $n$, it follows from \e{ckl} that
\begin{equation}\label{www}
  c_k(\lambda) \ll \lambda^{k+1/2}+\lambda^{-k-1/2}.
\end{equation}
Putting these estimates together shows that \e{teni} is
\begin{equation*}
  O\left(\lambda^{1/2-\sigma}(\lambda^{N-1/2}+ \lambda^{-N+1/2}) t^{-1/4} e^{-\pi t} \right).
\end{equation*}

Our results so far have established the next estimate (replacing $N$ with $M$).
\begin{prop} Let $\sigma \in I$ and $t=2\pi\alpha \beta$  for real numbers $\alpha,\beta\gqs 1$.  Then for all $M \in \Z_{\gqs 0}$ we have
\begin{multline} \label{nak}
  R(s;\alpha,\beta) =
   (-1)^{\lfloor \alpha \rfloor \lfloor \beta \rfloor+1}  \exp\left( \frac{\pi i}{2} \Bigl[2a\beta-2b\alpha+ a^2 \lambda^{-2}  - b^2 \lambda^2\Bigr]\right)\\
   \times
      \exp\left( \left( \frac s2-\frac 14\right) \log \frac t{2\pi} -\frac{i t}2 -\frac{i \pi}8 - i \vartheta(s)\right)
      \left(\frac{2\pi}t\right)^{1/4}\lambda^{1/2-s}
      \sum_{k=0}^{M-1} a_k(s) \cdot
  c_k(\lambda) \\
+O\left( \frac{\lambda^{1/2-\sigma}}{t^{1/4}}\left(\lambda^{M-1/2}+ \lambda^{-M+1/2}\right)  e^{-\pi t} + \frac{\lambda^{1-\sigma}}{t^{M/6+1/4}}\right).
\end{multline}
The implied constant in \e{nak} depends only on $I$, $M$ and $\lambda$. If $\lambda \gqs 1$ then the implied constant is independent of $\lambda$.
\end{prop}

The proof of Theorem \ref{almost} continues by inserting \e{umexp} and \e{akexp} into \e{nak} to obtain the desired asymptotic expansion  in decreasing powers of $t$. An argument similar to the one bounding \e{teni} shows that the total error introduced from the error term in \e{umexp} is
\begin{equation}\label{erru}
  O\left(\lambda^{1/2-\sigma}\left(\lambda^{M-1/2}+ \lambda^{-M+1/2}\right) t^{-1/4-L}  \right).
\end{equation}
Ignoring the constant and modulus $1$ pieces of \e{nak} for the moment, we have
\begin{multline} \label{tyu}
  \frac{\lambda^{1/2-s}}{  t^{1/4}}\left(  \sum_{m=0}^{L-1} \frac{u_m(\sigma)}{(i t)^m} \right)\sum_{k=0}^{M-1} c_k(\lambda)  \left( \sum_{j=0}^{\lfloor k/3\rfloor}  i^j \cdot d_{k-2j,j}(\sigma)  \cdot  \frac{1}{t^{k/2-j}}\right) \\
  = \frac{\lambda^{1/2-s}}{  t^{1/4}}\sum_{n=0}^{M+2L-3} \frac 1{t^{n/2}} \sum_{\substack{ m,k,j, \\2m+k-2j=n}} c_k(\lambda)  \cdot  i^{j-m}
   \cdot d_{k-2j,j}(\sigma)  \cdot u_m(\sigma)\\
   = \frac{\lambda^{1/2-s}}{  t^{1/4}}\sum_{n=0}^{M+2L-3} \frac 1{t^{n/2}} \sum_{k} c_k(\lambda) \cdot i^{(k-n)/2} \sum_j
    d_{k-2j,j}(\sigma)  \cdot u_{j+(n-k)/2}(\sigma)
\end{multline}
where in the last line we are summing over all $k$ and $j$ such that
\begin{gather} \label{kj}
  k \equiv n \bmod 2, \quad 0\lqs k \lqs M-1, \quad 2-2L+n \lqs k \lqs 3n, \\
  0\lqs j \lqs k/3, \quad (k-n)/2 \lqs j \lqs (k-n)/2+L-1. \label{kj2}
\end{gather}
The natural ranges of $k$ and $j$ are $0\lqs k \lqs 3n$ and $ \max(0,(k-n)/2)\lqs j \lqs k/3$, but for large $n$ these ranges become truncated. We may choose $N$ small enough in relation to $M$ and $L$ so that, for $0\lqs n\lqs N-1$, the ranges of $k$ and $j$ are not truncated. This requires
\begin{equation}\label{bml}
  M\gqs 3N+1, \qquad L \gqs N/2+1.
\end{equation}
The size of the remaining part of the sum \e{tyu} with $N\lqs n \lqs M+2L-3$ is  $O(t^{-N/2-1/4})$ in $t$ (see the next lemma) and we also require that  the error $O(t^{-M/6-1/4})$ in \e{nak} and the error $O(t^{-L-1/4})$ in \e{erru} are both less than this. This requires $M \gqs 3N$ and $L\gqs N/2$ and so is already ensured by \e{bml}. Given $N$, we therefore choose $M=3N+1$ and $L=\lceil N/2 \rceil+1$.

\begin{lemma}
We have
\begin{multline} \label{erru2}
  \frac{\lambda^{1/2-s}}{  t^{1/4}}\sum_{n=N}^{3N+2\lceil N/2 \rceil} \frac 1{t^{n/2}} \sum_{k} c_k(\lambda) \cdot i^{(k-n)/2} \sum_j
    d_{k-2j,j}(\sigma)  \cdot u_{j+(n-k)/2}(\sigma) \\
= O\left( \frac{\lambda^{1/2-\sigma}}{ t^{N/2+1/4}} \left( \lambda^{3N+1/2}+ \lambda^{-3N-1/2}\right)\right)
\end{multline}
where the indices $k$ and $j$ sum over the ranges \e{kj} and \e{kj2} for $M=3N+1$ and $L=\lceil N/2 \rceil+1$. The implied constant depends only on $N$ and $I$.
\end{lemma}
\begin{proof}
 By \e{kj}, the largest $k$ appearing in the sum is $3N$. Therefore $c_k(\lambda)$ is always $\ll \lambda^{3N+1/2}+ \lambda^{-3N-1/2}$ with \e{www}. The other bounds are clear.
\end{proof}

For $n\equiv k \bmod 2$ let
\begin{equation} \label{qnk}
  q_{n,k}(\sigma) := \sum_{j=\max(0,(k-n)/2)}^{\lfloor k/3\rfloor} d_{k-2j,j}(\sigma)  \cdot u_{j+(n-k)/2}(\sigma).
\end{equation}
With the definitions \e{dmr} and \e{ums} it is clear that $q_{n,k}(\sigma)$ is a polynomial in $\sigma$ with rational coefficients. Since $d_{m,r}(\sigma)$ has degree $m-r$ and $u_m(\sigma)$ has degree $2m$ it follows that $q_{n,k}(\sigma)$ has degree at most $n$.

For our choice of $M$ and $L$, the error in \e{erru2} is larger then the error terms in \e{nak} and \e{erru}. Therefore we have shown that
\begin{multline} \label{nak2}
  R(s;\alpha,\beta) =
   (-1)^{\lfloor \alpha \rfloor \lfloor \beta \rfloor+1}  \exp\left( \frac{\pi i}{2} \Bigl[2a\beta-2b\alpha+ a^2 \lambda^{-2}  - b^2 \lambda^2\Bigr]\right) (2\pi)^{1/4}
   \\
      \times  \frac{\lambda^{1/2-s}}{ t^{1/4}}\sum_{n=0}^{N-1} \frac 1{t^{n/2}}
\sum_{\substack{0\lqs k \lqs 3n\\ k \equiv n \bmod 2}}  i^{(k-n)/2} q_{n,k}(\sigma)
  \sum_{r=0}^k \binom{k}{r}  G^{(r)}(a\lambda^{-1}+b \lambda  ; \lambda^2 )
  \frac{e^{\pi i(k-3r)/4}}{2^{k-r}(2\pi)^{r/2}}H_{k-r} \left( \omega_\lambda\right)\\
  + O\left( \frac{\lambda^{1/2-\sigma}}{ t^{N/2+1/4}} \left( \lambda^{3N+1/2}+ \lambda^{-3N-1/2}\right)\right).
\end{multline}
The sums over $k$ and $r$ in \e{nak2}, after interchanging, are
\begin{equation*}
  \sum_{r=0}^{3n} \frac{G^{(r)}(a\lambda^{-1}+b \lambda  ; \lambda^2 )}{(2\pi)^{r/2}} e^{\pi i(n-3r)/4}
  \sum_{\substack{r\lqs k \lqs 3n\\ k \equiv n \bmod 2}} \binom{k}{r}   \frac{(-1)^{(n-k)/2}}{2^{k-r}} \cdot q_{n,k}(\sigma) \cdot H_{k-r} \left( \omega_\lambda\right).
\end{equation*}
Recall that $\omega_\lambda = e^{-\pi i/4} \sqrt{\frac \pi 2} ( a \lambda^{-1} - b \lambda)$. Write the inner piece as
\begin{equation*}
  P_{n,3n-r}(x,\sigma)  := e^{\pi i(n-3r)/4} \sum_{\substack{r\lqs k \lqs 3n\\ k \equiv n \bmod 2}} \binom{k}{r}   \frac{(-1)^{(n-k)/2}}{2^{k-r}} \cdot  q_{n,k}(\sigma) \cdot H_{k-r} \left( e^{-\pi i/4} x \right).
\end{equation*}
Then
\begin{equation}\label{mell}
  P_{n,k}(x,\sigma) =  e^{3\pi i k/4}  \sum^{\lfloor k/2 \rfloor}_{\ell=0} \binom{3n-2\ell}{3n-k} \frac{(-1)^{n+\ell}}{2^{k-2\ell}}
   \cdot q_{n,3n-2\ell}(\sigma) \cdot H_{k-2\ell} \left(e^{-\pi i/4} x\right).
\end{equation}
Clearly $ P_{n,k}(x,\sigma)$ is a polynomial in $x$ and $\sigma$ with  degree at most $k$ in $x$. A short calculation finds that the coefficient of $x^{k}$ is $i^k \binom{3n}{k}/((-3)^n n!)$ and so the degree is exactly $k$.
The complete construction of $P_{n,k}(x,\sigma)$ is repeated for convenience in \e{poly}. This  finishes the proof of Theorem \ref{almost}.
\end{proof}

 If $\lambda <1$ then the implied constant in Theorem \ref{almost} may have extra $\lambda$ dependence; this can be traced back to Proposition \ref{longj}. We will use the symmetry \e{refl} to fix this issue in the next section and complete the proof of Theorem \ref{unsym-rs}.

\section{The polynomials $P_{n,k}(x,\sigma)$} \label{pnry}
\subsection{A functional equation}
Recall the Bernoulli,  Hermite and Bell polynomials from \e{behe} and \e{pobell2}. Assembling our results, we may give a complete description of $P_{n,k}(x,\sigma)$ in terms of these polynomials as follows. In \e{fns}, \e{ums} and  \e{dmr}  we defined
\begin{subequations} \label{poly}
\begin{align}
  f_n(\sigma) & :=\frac{B_{n+1}(\sigma/2)+(-1)^{n+1} B_{n+1}((1-\sigma)/2)}{2n(n+1)}, \label{poly:a}\\
 u_m(\sigma) & := (-2)^m \sum_{k=0}^m \frac 1{k!} \hat{\mathcal B}_{m,k}(f_1(\sigma),f_2(\sigma),\dots),\label{poly:b}\\
d_{m,r}(\sigma) & := \sum_{n=r}^m \frac{\hat{\mathcal B}_{n,r}(\frac 13,-\frac 14,\frac 15,\dots)}{r!} \sum_{k=0}^{m-n} \frac{\hat{\mathcal B}_{m-n,k}(1,-\frac 12,\frac 13,\dots)}{k!}(\sigma-1)^k.\label{poly:c}
\end{align}
Rearranging \e{qnk} we may set
\begin{equation} \label{poly:d}
  q_{n,3n-2\ell}(\sigma) := \sum_{m=\max(0,\ell-n)}^{\lfloor \ell/3\rfloor} u_m(\sigma) \cdot d_{n-2m,n-\ell+m}(\sigma)
\end{equation}
for $0\lqs \ell \lqs \lfloor 3n/2\rfloor$.
Then as we  saw with \e{mell},
\begin{equation} \label{poly:e}
  P_{n,k}(x,\sigma) =  e^{3\pi i k/4}  \sum^{\lfloor k/2 \rfloor}_{\ell=0} \binom{3n-2\ell}{3n-k} \frac{(-1)^{n+\ell}}{2^{k-2\ell}}
   \cdot q_{n,3n-2\ell}(\sigma) \cdot H_{k-2\ell} \left(e^{-\pi i/4} x\right).
\end{equation}
\end{subequations}

We next show that the polynomials $P_{n,k}(x,\sigma)$ obey a functional equation as $\sigma \to 1-\sigma$. It seems difficult to prove this  directly with \e{poly}; our proof is based on Theorem \ref{almost}.

\begin{theorem} \label{polss}
For all $x,\sigma \in \R$ and all $n,k\in \Z$ with $0\lqs k \lqs 3n$ we have
\begin{equation*}
  P_{n,k}(x,\sigma)=\overline{P_{n,k}(-x,1-\sigma)}.
\end{equation*}
\end{theorem}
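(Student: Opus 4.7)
The strategy will be to deduce the identity from the $\mathcal T$-symmetry $R(s;\alpha,\beta)=\overline{R(1-\overline s;\beta,\alpha)}$ in \eqref{refl} combined with Theorem~\ref{almost}. Let $E_N(s;\alpha,\beta)$ denote the finite sum on the right-hand side of \eqref{rrss}. Applying Theorem~\ref{almost} both directly and via $\mathcal T$ yields
\[
E_N(s;\alpha,\beta)-\overline{E_N(1-\overline s;\beta,\alpha)} \;=\; O\!\left(\frac{\lambda^{1/2-\sigma}(\lambda+\lambda^{-1})^{3N+1/2}}{t^{N/2+1/4}}\right),
\]
whose implied constant may depend on $\lambda$; this is harmless since $\lambda$ will be fixed throughout.

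I would first verify that every factor in $E_N$ apart from the polynomials $P_{n,3n-r}$ is invariant under $\mathcal T$: the sign, the exponential prefactor, $(2\pi/t)^{1/4}$, and $\lambda^{1/2-s}$ are invariant because $\mathcal T$ swaps $a,b$ and sends $\lambda\mapsto 1/\lambda$; invariance of $G^{(r)}(a\lambda^{-1}+b\lambda;\lambda^2)$ follows from \eqref{fft} since its argument and parameter are real. Writing $u:=a\lambda^{-1}+b\lambda$, $x:=\sqrt{\pi/2}(a\lambda^{-1}-b\lambda)$, $A_{n,r}(x,\sigma):=P_{n,3n-r}(x,\sigma)-\overline{P_{n,3n-r}(-x,1-\sigma)}$, and
\[
D_n(a,b,\lambda,\sigma) \;:=\; \sum_{r=0}^{3n}(2\pi)^{-r/2} G^{(r)}(u;\lambda^2)\, A_{n,r}(x,\sigma),
\]
the displayed estimate becomes, after dividing by the common prefactor of size $\asymp t^{-1/4}$, the relation $\sum_{n=0}^{N-1} t^{-n/2}D_n(a,b,\lambda,\sigma) = O_{\lambda,\sigma,N}(t^{-N/2})$.

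Next I would show $D_n\equiv 0$ for every $n$ by induction. Fix arbitrary $a,b\in[0,1)$, $\lambda_0>0$, $\sigma\in\R$, and set $\alpha_M:=M+a$ and $\beta_M:=\lfloor(M+a)/\lambda_0^2-b\rfloor+b$ for $M\in\Z_{\gqs 1}$; for $M$ large, $\alpha_M,\beta_M\gqs 1$ have fractional parts exactly $a,b$, while $t_M=2\pi\alpha_M\beta_M\to\infty$ and $\lambda_M=\sqrt{\alpha_M/\beta_M}\to\lambda_0$ with $|\lambda_M-\lambda_0|=O(1/M)$. The $N=1$ case gives $D_0(a,b,\lambda_M,\sigma)=O(t_M^{-1/2})\to 0$, so by continuity of $G^{(r)}$ and the $P_{n,k}$ in $\lambda$ we obtain $D_0(a,b,\lambda_0,\sigma)=0$; arbitrariness of $\lambda_0$ gives $D_0\equiv 0$. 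Assuming $D_0,\dots,D_{n-1}\equiv 0$, the $N=n+1$ estimate reduces to $t_M^{-n/2}D_n(a,b,\lambda_M,\sigma)=O(t_M^{-(n+1)/2})$, so $D_n(a,b,\lambda_M,\sigma)\to 0$ and, again by continuity, $D_n\equiv 0$.

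Finally I would extract the polynomial identity: with $\lambda,\sigma$ fixed and $x\in\R$ sufficiently small, the constraint $a\lambda^{-1}-b\lambda=x\sqrt{2/\pi}$ admits a continuous family of $(a,b)\in[0,1)^2$ along which $u=a\lambda^{-1}+b\lambda$ sweeps an open interval. The vanishing of $D_n$ along this family forces the linear relation $\sum_{r=0}^{3n}(2\pi)^{-r/2}G^{(r)}(u;\lambda^2)A_{n,r}(x,\sigma)=0$ on that interval, and hence for all $u\in\C$ by analyticity in $u$. Proposition~\ref{ffff} then gives $A_{n,r}(x,\sigma)=0$ for every $r$, and since the $P_{n,k}$ are polynomials in $x$, the identity $P_{n,k}(x,\sigma)=\overline{P_{n,k}(-x,1-\sigma)}$ extends from an interval to all real $x$ and $\sigma$. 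The main technical obstacle is the order of the induction: it is essential to first upgrade each $D_n$ to vanish identically in $\lambda$ before handling $D_{n+1}$, because otherwise a residual discrepancy $D_n(a,b,\lambda_M,\sigma)-D_n(a,b,\lambda_0,\sigma)$ would contaminate the $t_M^{-n/2}$ term and prevent the next induction step from closing.
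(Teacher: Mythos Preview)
Your argument is correct and runs parallel to the paper's, but with two genuine differences in execution. The paper fixes $\lambda^2=u/v$ rational so that $\alpha\mapsto\alpha+u$ returns exactly the same $(a,b,\lambda)$; this lets $t\to\infty$ with all other data literally frozen, and the finitely many available $x$-values in \eqref{vals} are then overcounted against the degree $\lqs 3n$ of the polynomial. You instead let $\lambda_M\to\lambda_0$ along a sequence with prescribed $(a,b)$ and invoke continuity of $D_n$ in $\lambda$, after which you vary $(a,b)$ along a line to get $u$ in an open interval and conclude by analyticity plus Proposition~\ref{ffff}. Your route is arguably more direct (no degree count, no rational-$\lambda$ trick), but it silently uses that the implied constant in Theorem~\ref{almost} is locally uniform in $\lambda$; your sentence ``harmless since $\lambda$ will be fixed throughout'' is not quite right, since $\lambda_M$ does vary. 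The required uniformity is true (the $\lambda$-dependence enters only through Proposition~\ref{longj}, whose bounds are visibly uniform for $\lambda$ in compact subsets of $(0,\infty)$), but you should say so explicitly. With that one clarification your proof is complete; the paper's version trades this uniformity issue for the arithmetic device of rational $\lambda^2$ and the zero-counting argument of Lemma~\ref{fguv}.
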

\begin{proof}
Recall that $R(s;\alpha,\beta)$ is unchanged under the transformation $\mathcal T$ given in  \e{tra}. All the components of the right side of \e{rrss}, except for possibly $P_{n,k}$, are also unchanged under $\mathcal T$. For example
\begin{equation*}
\begin{array}{rcccl}
 \mathcal  T(\lambda^{1/2-s}) & = & \overline{(1/\lambda)^{1/2-(1-\overline{s})}} & = &\lambda^{1/2-s},\\
\mathcal T G^{(r)}(a\lambda^{-1}+b \lambda  ; \lambda^2 ) & = & \overline{G^{(r)}(b \lambda + a\lambda^{-1}  ; \lambda^{-2} )}
&  = & G^{(r)}(a\lambda^{-1}+b \lambda  ; \lambda^2 )
\end{array}
\end{equation*}
using \e{fft}. Hence, by  Theorem \ref{almost} we obtain
\begin{multline} \label{poi}
   \sum_{n=0}^{N-1} \frac 1{t^{n/2}}  \sum_{r=0}^{3n} \frac{G^{(r)}(a  \lambda^{-1} +b \lambda ;\lambda^2 )}{(2\pi)^{r/2}} \\
\times \left[P_{n,3n-r}\left(\frac{\sqrt{\pi}}{\sqrt{2}}(a \lambda^{-1} - b \lambda),\sigma\right)- \overline{P_{n,3n-r}\left(\frac{\sqrt{\pi}}{\sqrt{2}}(b \lambda - a \lambda^{-1}),1-\sigma\right)}\right]
=O\left( \frac {1}{t^{N/2}}\right)
\end{multline}
where the implied constant depends on $N\in \Z_{\gqs 0}$ and also on $\lambda$ and $\sigma$ which we assume are fixed. We choose $\lambda$ such that $\lambda^2$ is a rational $u/v$  with $(u,v)=1$. If we think of $\alpha$ varying then we have the dependent relations
\begin{equation} \label{dep}
  \beta= \frac{v}{u}\alpha, \quad t=2\pi \frac{v}{u}\alpha^2,  \quad a=\alpha-\lfloor \alpha \rfloor,  \quad b= \frac{v}{u}\alpha - \left\lfloor \frac{v}{u}\alpha \right\rfloor.
\end{equation}
Suppose $\alpha=\alpha_0$ has the corresponding $a$ and $b$ values $a_0$ and $b_0$, respectively. Then clearly $\alpha=\alpha_0+u$ will have the same $a$ and $b$ values. Hence, for $t$ taking values in the sequence $2\pi \frac{v}{u}(\alpha_0+ku)^2$ for integers $k$, the inner sum in \e{poi} is unchanged with $a=a_0$ and $b=b_0$. By letting $k$, and hence $t$, become arbitrarily large we obtain
\begin{equation*}
  \sum_{r=0}^{3n}  \frac{G^{(r)}(a  \lambda^{-1} +b \lambda ;\lambda^2 )}{(2\pi)^{r/2}}
\left[P_{n,3n-r}\left(\frac{\sqrt{\pi}}{\sqrt{2}}(a \lambda^{-1} - b \lambda),\sigma\right)- \overline{P_{n,3n-r}\left(\frac{\sqrt{\pi}}{\sqrt{2}}(b \lambda - a \lambda^{-1}),1-\sigma\right)}\right] =0
\end{equation*}
for $a=a_0$ and $b=b_0$ and $n \lqs N-1$. This follows since we may first show that the coefficient of $1/t^0$ must be $0$. Then the coefficient of $1/t^{1/2}$ must be $0$, etc. (If asymptotic expansions exist then they are unique.)

Let $f(\alpha):=a \lambda^{-1} + b \lambda$ and $g(\alpha):=a \lambda^{-1} - b \lambda$ where $a$ and $b$ depend on $\alpha$ as in \e{dep}. As we already saw, $f(\alpha)$ and $g(\alpha)$ both have period $u$. The next result shows which values they can take and we omit its elementary proof.

\begin{lemma} \label{fguv}
For $f(\alpha)$ and $g(\alpha)$ as defined above:
\begin{enumerate}
  \item The function $g(\alpha)$  is constant on $u+v-1$ non-empty intervals $[x_j,x_{j+1})$ which partition $[0,u)$.
  \item The values $g(\alpha)$ takes are
\begin{equation} \label{vals}
  \frac{\ell}{\sqrt{uv}} \quad \text{for integers $\ell$ with}  \quad 1-u \lqs \ell \lqs v-1.
\end{equation}
  \item On each interval $[x_j,x_{j+1})$ where $g(\alpha)$ is constant, the graph of $f(\alpha)$ is a line of slope $2/\lambda$.
\end{enumerate}
\end{lemma}

Fix $x$ as one of the $g(\alpha)$ values in \e{vals}. By part (iii) of the lemma we have
\begin{equation} \label{cohe}
  \sum_{r=0}^{3n}  \frac{G^{(r)}(w ;\lambda^2 )}{(2\pi)^{r/2}} \left[P_{n,3n-r}\left(\frac{\sqrt{\pi}}{\sqrt{2}}x ,\sigma\right)- \overline{P_{n,3n-r}\left(-\frac{\sqrt{\pi}}{\sqrt{2}}x,1-\sigma\right)}\right] =0
\end{equation}
for $w$ in some non-empty interval. Since each $G^{(r)}(w ;\lambda^2 )$ is a holomorphic function of $w$, it follows that \e{cohe} is true for all $w\in \C$.

The linear independence of the derivatives of $G$ shown in Proposition \ref{ffff} implies that
\begin{equation}\label{jhg}
  P_{n,k}\left(\frac{\sqrt{\pi}}{\sqrt{2}}x ,\sigma\right)- \overline{P_{n,k}\left(-\frac{\sqrt{\pi}}{\sqrt{2}}x,1-\sigma\right)}
\end{equation}
is $0$ for every $k$ with $0\lqs k \lqs 3n$. Hence the numbers \e{vals} give $u+v-1$ distinct zeros of the polynomial \e{jhg} in $x$. It has degree at most $3n$ in $x$. Returning to our choice of $\lambda^2=u/v$, we may choose a reduced fraction so that $u+v-1> 3n$. This gives too many zeros and so \e{jhg} is identically zero as required.
\end{proof}

\begin{proof}[Proof of Theorem \ref{unsym-rs}]
By Theorem \ref{almost}, we know that Theorem \ref{unsym-rs} is true for $\lambda \gqs 1$, so assume that $\lambda < 1$. Then by \e{refl} we have $R(s;\alpha,\beta) = \overline{R(1-\overline{s};\beta,\alpha)}$. We may apply Theorem \ref{almost} to $R(1-\overline{s};\beta,\alpha)$ and obtain an error that is independent of $\lambda$.  All the components of the right side of \e{rrss} are invariant under the transformation $\mathcal T$ in \e{tra}, including the $P_{n,k}$ term by Theorem \ref{polss}. In this way we obtain Theorem \ref{unsym-rs} when $\lambda < 1$. This completes our proof of the main theorem.
\end{proof}

\subsection{Formulas for the coefficients}
With the well-known formula for Hermite polynomials
\begin{equation} \label{hann}
  H_n(x)=n!\sum_{j=0}^{\lfloor n/2\rfloor} \frac{(-1)^j}{j!(n-2j)!}(2x)^{n-2j}
\end{equation}
we obtain from \e{mell}
\begin{align*}
  P_{n,k}(x,\sigma)  & = \sum^{\lfloor k/2 \rfloor}_{\ell=0} \frac{(3n-2\ell)!}{(3n-k)!} \frac{(-1)^{n+\ell}}{2^{k-2\ell}}
   \cdot q_{n,3n-2\ell}(\sigma) \sum^{\lfloor k/2 \rfloor}_{m=\ell} \frac{(-1)^{m-\ell} i^{k+m} (2x)^{k-2m}}{(m-\ell)!(k-2m)!}\\
& =  \frac{(-1)^n}{(3n-k)!}\sum^{\lfloor k/2 \rfloor}_{m=0} \frac{ i^{k+m} x^{k-2m}}{(-4)^m(k-2m)!}
\sum_{\ell=0}^m 4^\ell \frac{(3n-2\ell)!}{(m-\ell)!}\cdot   q_{n,3n-2\ell}(\sigma).
\end{align*}
For $0\lqs m\lqs \lfloor 3n/2 \rfloor$ put
\begin{equation} \label{snm}
  s_{n,m}(\sigma):=  \sum_{\ell=0}^m 4^\ell \frac{(3n-2\ell)!}{(m-\ell)!} \cdot q_{n,3n-2\ell}(\sigma)
\end{equation}
so that
\begin{equation}
  P_{n,k}(x,\sigma)  = \frac{(-1)^n i^{k} }{(3n-k)!} \sum^{\lfloor k/2 \rfloor}_{m=0} \frac{s_{n,m}(\sigma)}{(4 i)^m} \frac{ x^{k-2m}}{(k-2m)!}. \label{npp2}
\end{equation}

An easy calculation with \e{npp2} gives the next result,  showing how Theorem \ref{polss} may be interpreted at the level of the coefficients of $P_{n,k}(x,\sigma)$.

\begin{lemma} \label{cong}
Let $n$ be in $\Z_{\gqs 0}$. For all integers  $k$ with $0\lqs k \lqs 3n$, we have $P_{n,k}(x,\sigma)=\overline{P_{n,k}(-x,1-\sigma)}$ if and only if
\begin{equation*}
  s_{n,m}(\sigma) = (-1)^m s_{n,m}(1-\sigma)
\end{equation*}
for all integers $m$ satisfying $0 \lqs m \lqs \lfloor 3n/2 \rfloor$.
\end{lemma}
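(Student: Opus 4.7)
The statement is a direct algebraic consequence of the explicit formula \e{npp2} for $P_{n,k}(x,\sigma)$, so my plan is to do exactly what is suggested: compute $\overline{P_{n,k}(-x,1-\sigma)}$ term by term and compare with $P_{n,k}(x,\sigma)$.

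First I would observe that $s_{n,m}(\sigma)$ is a polynomial in $\sigma$ with rational coefficients. This is inherited from \e{poly:a}--\e{poly:c} via \e{poly:d} and \e{snm}, since each of $f_n(\sigma)$, $u_m(\sigma)$, $d_{m,r}(\sigma)$ and therefore $q_{n,3n-2\ell}(\sigma)$ has rational coefficients. Consequently, for real $\sigma$, $\overline{s_{n,m}(1-\sigma)} = s_{n,m}(1-\sigma)$.

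Next I would apply complex conjugation to \e{npp2} evaluated at $(-x,1-\sigma)$. The only pieces affected by conjugation are the factors $i^k$ and $(4i)^m$, which give $\overline{i^k} = (-1)^k i^k$ and $\overline{(4i)^{-m}} = (-1)^m (4i)^{-m}$. The substitution $x \mapsto -x$ contributes $(-x)^{k-2m} = (-1)^k x^{k-2m}$ (since $(-1)^{-2m}=1$). The two factors of $(-1)^k$ cancel, leaving
\begin{equation*}
  \overline{P_{n,k}(-x,1-\sigma)}  = \frac{(-1)^n i^{k} }{(3n-k)!} \sum^{\lfloor k/2 \rfloor}_{m=0} \frac{(-1)^m s_{n,m}(1-\sigma)}{(4 i)^m} \frac{ x^{k-2m}}{(k-2m)!}.
\end{equation*}

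Comparing with $P_{n,k}(x,\sigma)$ in the form \e{npp2} and matching coefficients of the linearly independent monomials $x^{k-2m}$ (for $0\lqs m\lqs \lfloor k/2\rfloor$), the identity $P_{n,k}(x,\sigma) = \overline{P_{n,k}(-x,1-\sigma)}$ is equivalent to $s_{n,m}(\sigma) = (-1)^m s_{n,m}(1-\sigma)$ for $0\lqs m\lqs \lfloor k/2\rfloor$. Taking $k=3n$ gives $\lfloor k/2\rfloor = \lfloor 3n/2\rfloor$, so summing over $k\in[0,3n]$ yields the identities for all $m$ in the stated range, and conversely, the $s_{n,m}$ identities immediately give the $P_{n,k}$ identities for every $k\in[0,3n]$. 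There is no real obstacle here; the only bookkeeping to get right is the sign tracking in the three factors $i^k$, $(4i)^{-m}$ and $(-x)^{k-2m}$, which I have outlined above.
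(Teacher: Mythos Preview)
Your proposal is correct and is exactly the ``easy calculation with \e{npp2}'' that the paper alludes to without writing out. Your sign bookkeeping is right, and the observation that $s_{n,m}(\sigma)$ has rational coefficients (so is real for real $\sigma$) is the only nontrivial point; the equivalence then follows by matching coefficients of $x^{k-2m}$ as you describe.
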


 Theorem \ref{polss} and Lemma \ref{cong} show in particular that $s_{n,m}(1/2) =0$ for $m$ odd. Hence $m$ may be assumed to be even in \e{npp2} when $\sigma=1/2$. We obtain
\begin{equation} \label{npp3}
   P_{n,k}(x,1/2) = \frac{(-1)^n i^{k} }{(3n-k)!} \sum^{\lfloor k/4 \rfloor}_{m=0} (-1)^m \frac{ s_{n,2m}(1/2) }{16^{m}} \frac{x^{k-4m}}{(k-4m)!}.
\end{equation}
In the case $x=0$ we see that  $P_{n,k}(0,1/2)$ is zero if $k \not\equiv 0 \bmod 4$. This explains why only every fourth derivative appears in the classical Riemann-Siegel formulas \e{riem}, \e{rrss40}.

The coefficients of the highest powers of $x$ in $P_{n,k}(x,\sigma)$ may be computed explicitly:
\begin{subequations} \label{polyc}
\begin{align}
  x^k: \qquad & \frac{(-1)^n i^k}{3^{n}n!}\binom{3n}{k}, \\
  x^{k-2}: \qquad & \frac{(-1)^n i^{k-1}}{3^{n-1}(n-1)!}\binom{3n-2}{k-2}(\sigma-1/2),\\
x^{k-4}: \qquad & \frac{(-1)^n i^{k}}{3^{n-2}(n-2)!}\binom{3n-4}{k-4}\left[ \frac{3n-1}{20}-\frac 12 (\sigma-1/2)^2\right],\\
x^{k-6}: \qquad & \frac{(-1)^n i^{k-1}}{3^{n-2}(n-2)!}\binom{3n-6}{k-6}(\sigma-1/2)
\left[\frac{9n^2-20n+9}{20}- \frac{n-2}{2}(\sigma-1/2)^2\right].
\end{align}
The general pattern continues with the next coefficient
\begin{multline}
  x^{k-8}: \qquad \frac{(-1)^n i^k}{3^{n-3}(n-3)!}\binom{3n-8}{k-8} \\
  \times \left[\frac{63n^2-141n+31}{5600}(3n-5)
-\frac{9n^2-28n+23}{40}(\sigma-1/2)^2+\frac{n-3}{8} (\sigma-1/2)^4\right].
\end{multline}
\end{subequations}
These calculations use \e{npp2}, \e{snm} and \e{poly}. Finding $d_{m,r}(\sigma)$ involves the Bell polynomials and   we used the  identities for $r\in \Z_{\gqs 0}$
\begin{align*}
  \hat{\mathcal B}_{r,r}(p_1, p_2, p_3, \dots) & = p_1^r, \\
  \hat{\mathcal B}_{r+1,r}(p_1, p_2, p_3, \dots) & = r p_1^{r-1} p_2,\\
\hat{\mathcal B}_{r+2,r}(p_1, p_2, p_3, \dots) & = \binom{r}{2} p_1^{r-2} p_2^2 + \binom{r}{1} p_1^{r-1} p_3,
\end{align*}
and so on. These follow from \e{pobell}. The algebra to obtain the coefficients in \e{polyc} was carried out with  Mathematica.

\section{Examples and numerical work} \label{numb}

\subsection{The case $\alpha=2\beta$}
The classical case of the  Riemann-Siegel formula has the lengths of the partial sums equal, so that $\alpha=\beta$ and $\lambda=1$. In Theorem \ref{unsym-rs}, the next simplest case has the length of one partial sum twice the other:
\begin{equation*}
  \alpha=2\beta \quad \implies \quad \lambda=\sqrt{2}, \quad \alpha= \sqrt{\frac t\pi}, \quad \beta= \frac 12 \sqrt{\frac t\pi}.
\end{equation*}
With $a$ and $b$ the fractional parts of $2\beta$ and $\beta$ we obtain
\begin{multline} \label{rrx}
  R(s;2\beta,\beta) =
   (-1)^{\lfloor 2\beta \rfloor \lfloor \beta \rfloor+1}
\exp\left(\pi i(a\beta-2b\beta+ a^2/4  - b^2)\right)
   \\
      \times   \left( \frac{2\pi}{t}\right)^{1/4}   \sum_{n=0}^{N-1} \frac {2^{1/4-s/2}}{t^{n/2}} \left[ \sum_{r=0}^{3n}
    \frac{G^{(r)}(a/\sqrt{2} +\sqrt{2} b  ;2 )}{(2\pi)^{r/2}} \cdot P_{n,3n-r}\left(\sqrt{\pi} (a/2 - b),\sigma\right)\right]
\\
+O\left( \frac {2^{(1-\sigma + 3N)/2}}{t^{N/2+1/4}}\right).
\end{multline}
As we saw in \e{gu2}, with $\theta_{2}(u):=u^2/2 -\sqrt{2} u-9/8$,
$$
G(u;2)  =\frac{-1}{2i \sin(\sqrt{2}\pi u)}\left[2^{1/4}e^{-\pi i \theta_2(u)} - 2^{-1/4}e^{\pi i \theta_2(u)}\left(1+i e^{\sqrt{2}\pi i u}\right) \right].
$$
It is easy to see that the polynomials $P_{n,3n-r}(x,\sigma)$ in \e{rrx} are only evaluated at $x=0$ if $b\in [0,1/2)$ and at $x=-\sqrt{\pi}/2$ if $b\in [1/2,1)$. This corresponds to Lemma \ref{fguv} with $u=2$ and $v=1$.
Examples of \e{rrx} for $s=3/4+400 i$ and  different values of $N$  are displayed in Table \ref{tb2:1}, correct to the accuracy shown.

\begin{table}[ht]
\centering
\begin{tabular}{ccc}
\hline
$N$ & Theorem \ref{unsym-rs} & \\
\hline
$1$ & $0.11\textcolor{gray}{628656704} + 0.031\textcolor{gray}{02038722} i$ & \\
$3$ & $0.11503\textcolor{gray}{659264} + 0.031341\textcolor{gray}{63666} i$ & \\
$5$ & $0.11503572\textcolor{gray}{670} + 0.03134146\textcolor{gray}{229} i$ & \\
\hline
 & $0.11503572550 + 0.03134146183 i$ & $R$\\
\hline
\end{tabular}
\caption{The approximations of Theorem \ref{unsym-rs} to $R=R(3/4+400 i;20/\sqrt{\pi},10/\sqrt{\pi})$.} \label{tb2:1}
\end{table}

\subsection{An example with increasing $\lambda$}
Suppose we take $\alpha=t^c$ in Theorem \ref{unsym-rs} for some $c>1/2$. Then
\begin{equation*}
  \beta=\frac{t}{2\pi \alpha} =\frac{t^{1-c}}{2\pi}, \qquad \lambda =\sqrt{\frac \alpha \beta}=\sqrt{2\pi} \, t^{c-1/2}.
\end{equation*}
The error term in \e{rrss} is $O(t^{-(c-1/2)(\sigma-1/2)+N(3c-2)-1/2})$ and so we require $c<2/3$ for this to decrease with $N$. If we take $c=5/8$, for example, then Theorem \ref{unsym-rs} gives
\begin{multline} \label{ssq}
  R(s;t^{5/8},t^{3/8}/(2\pi)) =
   e^{\pi i A(t)}  (2\pi)^{3/4-s}t^{-1/8-s/4}
   \\
      \times      \sum_{n=0}^{N-1} \frac {1}{t^{n/2}} \left[ \sum_{r=0}^{3n}
    \frac{G^{(r)}(\frac a{\sqrt{2\pi}} t^{-1/8} +b \sqrt{2\pi} t^{1/8} ;2\pi t^{1/4} )}{(2\pi)^{r/2}} \cdot P_{n,3n-r}\left(\frac a2 t^{-1/8} -b \pi t^{1/8},\sigma\right)\right]
\\
+O\left( \frac {1}{t^{(N+1+\sigma)/8}}\right)
\end{multline}
for
\begin{equation*}
  A(t):=\lfloor t^{5/8} \rfloor \lfloor t^{3/8}/(2\pi) \rfloor+1+ a t^{5/8}- b\frac{t^{3/8}}{2\pi}+\frac{a^2}{4\pi} t^{-1/4}-b^2 \pi t^{1/4}
\end{equation*}
and $a$, $b$ the fractional parts of $t^{5/8}$, $t^{3/8}/(2\pi)$. The derivatives of $G$ in \e{ssq} may be expressed in terms of Hermite polynomials and derivatives of $\Upsilon$ as in \e{fwy}. Then the derivatives of $\Upsilon$ can be computed with \e{updiff}. Table \ref{mich} shows each side of \e{ssq} when $s=1/2+256 i$.

\begin{table}[ht]
\centering
\begin{tabular}{ccc}
\hline
$N$ & Theorem \ref{unsym-rs} & \\
\hline
$1$ & $-0.12\textcolor{gray}{120812956} + 0.00\textcolor{gray}{884587559} i$ & \\
$2$ & $-0.1207\textcolor{gray}{5592244} + 0.0078\textcolor{gray}{9494686} i$ & \\
$4$ & $-0.120742\textcolor{gray}{08191} + 0.0078772\textcolor{gray}{9724} i$ & \\
\hline
 & $-0.12074212743 + 0.00787728177  i$ & $R$\\
\hline
\end{tabular}
\caption{The approximations of Theorem \ref{unsym-rs} to $R=R(1/2+256 i;32,4/\pi)$.} \label{mich}
\end{table}

It is natural to consider the difference $R(s;\alpha,\beta)-R(s;\alpha',\beta')$, as the $\zeta(s)$ terms cancel. With $\alpha=t^{5/8}$ as above and $\alpha'=t^{5/8}/2$ we may obtain the asymptotics of
\begin{equation*}
  e^{i\vartheta(s)}\Biggl( \sum_{\frac{t^{5/8}}2 < n \lqs t^{5/8}} \frac {1}{n^{s}}\Biggr) -e^{i\vartheta(1-s)}\Biggl( \sum_{\frac{t^{3/8}}{2\pi} < n\lqs \frac{t^{3/8}}{\pi}} \frac {1}{n^{1-s}}\Biggr),
\end{equation*}
for example, with \e{ssq} minus a similar expression.

\subsection{On the line $\Re(s)=1$}
When $\sigma=1$ there are some simplifications in the definition of $P_{n,k}(x,\sigma)$ in \e{poly}. With basic properties of Bernoulli polynomials we find
\begin{equation*}
  f_n(1)=\frac{B_{n+1}}{n(n+1)2^{n+1}}.
\end{equation*}
Also, for $d_{m,r}(1)$ in \e{poly:c}, only the $k=0$ term can be non-zero. Then $\hat{\mathcal B}_{m-n,0}(1,-\frac 12,\frac 13,\dots)$ is zero unless $n=m$. Therefore
\begin{equation*}
  d_{m,r}(1) = \hat{\mathcal B}_{m,r}(\textstyle{\frac 13,-\frac 14,\frac 15,\dots})/r!.
\end{equation*}
With this, the first polynomials $P_{n,k}(x,1)$ for $0\lqs k\lqs 3n$ are $P_{0,0}(x,1)=1$ and
\begin{equation*}
  P_{1,0}(x,1)= -\textstyle{\frac 13}, \quad P_{1,1}(x,1)= -i x, \quad
P_{1,2}(x,1)= x^2 -\textstyle{\frac i2}, \quad P_{1,3}(x,1)= \textstyle{\frac i3}x^3 + \textstyle{\frac 12}x.
\end{equation*}
For $n=2$ we have
\begin{gather}
  P_{2,0}(x,1)= \textstyle{\frac 1{18}}, \quad P_{2,1}(x,1)= \textstyle{\frac i3} x, \quad
P_{2,2}(x,1)=  -\textstyle{\frac 56} x^2 +\textstyle{\frac i6}, \quad
 P_{2,3}(x,1)= -\textstyle{\frac {10i}9}x^3 - \textstyle{\frac 23}x, \notag\\
  P_{2,4}(x,1)= \textstyle{\frac 56}x^4 -i x^2+\textstyle{\frac 18}, \quad
P_{2,5}(x,1)= \textstyle{\frac i3}x^5 + \textstyle{\frac 23}x^3 + \textstyle{\frac i4}x, \notag\\
  P_{2,6}(x,1)= -\textstyle{\frac 1{18}}x^6 + \textstyle{\frac i6}x^4 - \textstyle{\frac 18}x^2 + \textstyle{\frac i8}. \label{p2rs}
\end{gather}
For example, taking $\sigma=1$, $t=600$ and $\alpha/\beta=5/3$ in Theorem \ref{unsym-rs} gives the results in Table \ref{tabre}.

\begin{table}[ht]
\centering
\begin{tabular}{ccc}
\hline
$N$ & Theorem \ref{unsym-rs} & \\
\hline
$1$ & $0.07\textcolor{gray}{827091811} - 0.076\textcolor{gray}{57008324} i$ & \\
$3$ & $0.07798\textcolor{gray}{494014} - 0.076932\textcolor{gray}{55693} i$ & \\
$5$ & $0.077985048\textcolor{gray}{83} - 0.0769326604\textcolor{gray}{7} i$ & \\
\hline
 & $0.07798504890 - 0.07693266040 i$ & $R$\\
\hline
\end{tabular}
\caption{The approximations of Theorem \ref{unsym-rs} to $R=R(1+600 i;\sqrt{500/\pi},\sqrt{180/\pi})$.} \label{tabre}
\end{table}

\subsection{On the critical line}
For $\sigma=1/2$ we have already seen with \e{npp3} that the polynomials $P_{n,k}(x,\sigma)$ take a simpler form; only coefficients of powers of $x$ that are congruent to $k \bmod 4$ can be non-zero. For example,
$P_{0,0}(x,1/2)=1$ and
\begin{equation} \label{beto2}
  P_{1,0}(x,1/2)= -\textstyle{\frac 13}, \quad P_{1,1}(x,1/2)= -i x, \quad
P_{1,2}(x,1/2)= x^2 , \quad P_{1,3}(x,1/2)= \textstyle{\frac i3}x^3.
\end{equation}
Of course, \e{beto2} is a special case of \e{beto}. For $n=2$ we have
\begin{gather}
  P_{2,0}(x,1/2)= \textstyle{\frac 1{18}}, \quad P_{2,1}(x,1/2)= \textstyle{\frac i3} x, \quad
P_{2,2}(x,1/2)=  -\textstyle{\frac 56} x^2, \quad
 P_{2,3}(x,1/2)= -\textstyle{\frac {10i}9}x^3 , \notag\\
  P_{2,4}(x,1/2)= \textstyle{\frac 56}x^4 +\textstyle{\frac 14}, \quad
P_{2,5}(x,1/2)= \textstyle{\frac i3}x^5 +  \textstyle{\frac i2}x, \quad 
  P_{2,6}(x,1/2)= -\textstyle{\frac 1{18}}x^6  - \textstyle{\frac 14}x^2. \label{p2rsb}
\end{gather}
We give a more detailed  numerical example in Table \ref{jeb} for $s=1/2+800i$ and $\alpha/\beta=4$.

\begin{table}[ht]
\centering
\begin{tabular}{ccc}
\hline
$N$ & Theorem \ref{unsym-rs} & \\
\hline
$1$ & $-0.079\textcolor{gray}{66764263636} - 0.073\textcolor{gray}{73504930114} i$ & \\
$3$ & $-0.079573\textcolor{gray}{71736089} - 0.07351\textcolor{gray}{910859701}  i$ & \\
$5$ & $-0.079573651\textcolor{gray}{82034} - 0.073518978\textcolor{gray}{39664} i$ & \\
$7$ & $-0.0795736517815\textcolor{gray}{8} - 0.073518978259\textcolor{gray}{65} i$  &  \\
\hline
 & $-0.07957365178152 - 0.07351897825948 i$ & $R$\\
\hline
\end{tabular}
\caption{The approximations of Theorem \ref{unsym-rs} to $R=R(1/2+800 i;40/\sqrt{\pi},10/\sqrt{\pi})$.} \label{jeb}
\end{table}

Specializing \e{polyc} to $\sigma=1/2$, and extending the calculation to $x^{k-12}$, shows that the highest degree terms in $P_{n,k}(x,1/2)$ are
\begin{multline} \label{highp}
  \frac{P_{n,k}(x,1/2)}{(-1)^n i^k} = \binom{3n}{k}\frac{x^k}{3^{n}n!}+  \binom{3n-4}{k-4} \frac{3n-1}{20}\frac{x^{k-4}}{3^{n-2}(n-2)!}\\
  +\binom{3n-8}{k-8}
  \frac{63n^2-141n+31}{5600}(3n-5)\frac{x^{k-8}}{3^{n-3}(n-3)!}\\
+\binom{3n-12}{k-12}
  \frac{567 n^4 - 4374 n^3 + 10968 n^2 - 9621 n +1280}{112000 }(n-3)\frac{x^{k-12}}{3^{n-4}(n-4)!}+ \cdots .
\end{multline}
The formulas in \e{highp} only make sense for $n$ large enough. For $P_{n,k}(x,1/2)$ to contain $x^{k-12}$ for instance, we need $k \gqs 12$ and hence $n \gqs 4$. It may be verified that the coefficient of $x^{k-12}$ in \e{highp} is always positive for these $n$ and $k$. Similarly, the higher powers of $x$ in \e{highp} always have positive coefficients. Combining these calculations with \e{npp3}, we have proved:
\begin{prop}
The terms in  $P_{n,k}(x,1/2)/((-1)^n i^k)$ may only contain powers of $x$ of the form $x^{k-4m}$ for $0\lqs m \lqs k/4$. For these $m$ values,
the coefficients of  $x^{k-4m}$  are always positive if $0\lqs m \lqs 3$.
\end{prop}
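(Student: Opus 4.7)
The first claim, that only powers $x^{k-4m}$ with $0\le m\le k/4$ occur in $P_{n,k}(x,1/2)/((-1)^n i^k)$, is read off directly from \eqref{npp3}, so there is nothing further to prove there.

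For the positivity statement, I would start from \eqref{npp3} and identify the coefficient of $x^{k-4m}$ in $P_{n,k}(x,1/2)/((-1)^n i^k)$ as
\begin{equation*}
  \frac{(-1)^m\,s_{n,2m}(1/2)}{(3n-k)!\,(k-4m)!\,16^m}.
\end{equation*}
The problem reduces to showing $(-1)^m s_{n,2m}(1/2)>0$ for $m\in\{0,1,2,3\}$, in the range of $n$ and $k$ where the corresponding coefficient in \eqref{highp} is nonzero.

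The plan for computing $s_{n,2m}(1/2)$ is to unfold \eqref{snm}, \eqref{poly:d}, \eqref{poly:b}, and \eqref{poly:c} in turn. A crucial simplification at $\sigma=1/2$ is
\begin{equation*}
  f_n(1/2) \;=\; \frac{1+(-1)^{n+1}}{2n(n+1)}\,B_{n+1}(1/4),
\end{equation*}
which vanishes for every even $n$; this kills most contributions to $u_m(1/2)$. For the Bell polynomials appearing in \eqref{poly:b} and \eqref{poly:c}, the closed forms for $\hat{\mathcal B}_{r,r}$, $\hat{\mathcal B}_{r+1,r}$, $\hat{\mathcal B}_{r+2,r}$ stated after \eqref{polyc}, together with the analogous formula for $\hat{\mathcal B}_{r+3,r}$ derived from \eqref{pobell}, are sufficient. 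Substituting everything back yields the four leading coefficients displayed in \eqref{highp}. This is the most bookkeeping-intensive step, and I would proceed exactly as the author does for $x^k,\ x^{k-4},\ x^{k-8}$, extending one more step to $x^{k-12}$.

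Positivity in each case is then a finite check. For $m=0$ the coefficient $\binom{3n}{k}/(3^n n!)$ is manifestly positive. For $m=1$ the factor $(3n-1)/20$ is positive for $n\ge 1$. For $m=2$ one needs $63n^2-141n+31>0$ on $n\ge 3$; the quadratic equals $175$ at $n=3$ and its derivative $126n-141$ is positive for $n\ge 3$, so it is increasing throughout. For $m=3$ one needs $p(n):=567n^4-4374n^3+10968n^2-9621n+1280>0$ on $n\ge 4$. Direct evaluation gives $p(4)=3500$ and $p'(4)=13323$. The second derivative $p''(n)=6804n^2-26244n+21936$ satisfies $6804n^2\ge 27216n>26244n$ for $n\ge 4$, whence $p''(n)>21936>0$ on $[4,\infty)$; hence $p'$ is increasing on $[4,\infty)$, and consequently so is $p$, yielding the desired inequality. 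The main obstacle is the Bell-polynomial bookkeeping that leads to \eqref{highp}; once those formulas are secured, the positivity arguments reduce to short, elementary calculations.
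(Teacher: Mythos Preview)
Your proposal is correct and follows essentially the same route as the paper: the first claim is immediate from \eqref{npp3}, and the positivity for $m\le 3$ comes from the explicit computation of the coefficients displayed in \eqref{highp} (obtained exactly as you describe, by unfolding \eqref{snm}, \eqref{poly:d}, \eqref{poly:b}, \eqref{poly:c} with the Bell-polynomial identities), followed by verification that each polynomial factor in $n$ is positive on the relevant range. The paper merely asserts ``it may be verified'' for the positivity, whereas you spell out the elementary derivative arguments; you might also note explicitly that the remaining linear factors $(3n-5)$ and $(n-3)$ in the $m=2,3$ cases are trivially positive for $n\ge 3$ and $n\ge 4$ respectively.
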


To examine this positivity further, let $S_{n,k}$ be the set of
 all the coefficients of $x^{k-4m}$ for $0\lqs m \lqs k/4$ in $P_{n,k}(x,1/2)/((-1)^n i^k)$. Let $S_n$ be the union of these $S_{n,k}$ for $k$ in the range $0\lqs k\lqs 3n$. Then further computations show that all the elements of $S_n$ are positive for  $0\lqs n \lqs 50$. It seems likely that this positivity continues for all $n$. This would also imply that the sign pattern for $C_n(a)$ we see in \e{riem} continues for all $n$, with positive coefficients for $n$ even and negative coefficients for $n$ odd. The signs of the coefficients of $P_{n,k}(x,\sigma)$ also appear to obey predictable patterns, at least for $\sigma$ not too far from $1/2$.

\SpecialCoor
\psset{griddots=5,subgriddiv=0,gridlabels=0pt}
\psset{xunit=0.7cm, yunit=0.7cm, runit=0.7cm}
\psset{linewidth=1pt}
\psset{dotsize=3.5pt 0,dotstyle=*}
\psset{arrowscale=1.5,arrowinset=0.3}
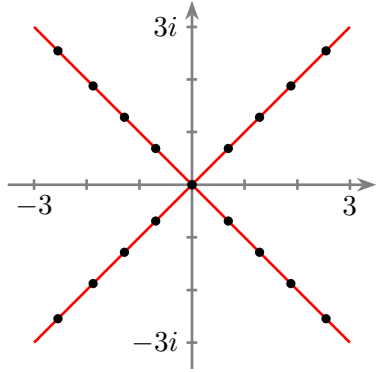
\begin{figure}[ht]
\centering
\begin{pspicture}(-3.5,-3.5)(3.5,3.5) 

\psline[linecolor=gray]{->}(-3.5,0)(3.5,0)
\psline[linecolor=gray]{->}(0,-3.5)(0,3.5)

\psline[linecolor=red](-3,-3)(3,3)
\psline[linecolor=red](-3,3)(3,-3)

\multirput(-3,-0.1)(1,0){7}{\psline[linecolor=gray](0,0)(0,0.2)}
\multirput(-0.1,-3)(0,1){7}{\psline[linecolor=gray](0,0)(0.2,0)}

\savedata{\mydatb}[
{{-2.54565, -2.54565}, {-2.54565, 2.54565}, {-1.87821,
  1.87821}, {-1.87821, -1.87821}, {-1.28299, -1.28299}, {-1.28299,
  1.28299}, {-0.689671, -0.689671}, {-0.689671, 0.689671}, {0.,
  0}, {0., 0}, {0.689671, 0.689671}, {0.689671, -0.689671}, {1.28299,
  1.28299}, {1.28299, -1.28299}, {1.87821, -1.87821}, {1.87821,
  1.87821}, {2.54565, -2.54565}, {2.54565, 2.54565}}
]
\dataplot[plotstyle=dots]{\mydatb}


\rput(3,-0.4){$3$}
\rput(-3,-0.4){$-3$}
\rput(-0.5,3){$3i$}
\rput(-0.7,-3){$-3i$}

\end{pspicture}
\caption{The zeros of $P_{6,18}(x,1/2)$}
\label{zeros}
\end{figure}

Another interesting aspect of $P_{n,k}(x,1/2)$ is that, in all the cases we have examined, its zeros are on the lines bisecting the quadrants and are nearly evenly spaced. Figure \ref{zeros} shows the zeros of
\begin{equation*}
  P_{6,18}(x,1/2) = - \textstyle{\frac{1}{524880}}x^{18} - \textstyle{\frac{17}{38880}}
 x^{14} - \textstyle{\frac{18889}{907200}} x^{10} - \textstyle{\frac{367}{1920}} x^6 -\textstyle{\frac{5}{32}} x^2.
\end{equation*}
For $\sigma$ near $1/2$ the zeros of $P_{n,k}(x,\sigma)$ appear to have a similar distribution. 

\subsection{Conclusion}
We have shown that the Riemann-Siegel formula  and the Hardy-Littlewood approximate functional equation are special cases of a   shared natural generalization in Theorem \ref{unsym-rs}.  The classical Riemann-Siegel coefficients $C_n(a)$ are given in our reformulation by
\begin{equation} \label{rty}
   C_n(a) = \sum_{r=0}^{3n}
    \frac{G^{(r)}(2a ;1 )}{(2\pi)^{r/2}} \cdot P_{n,3n-r}\left(0,1/2\right)
\end{equation}
as is seen by comparing \e{rt} with \e{rrss40}.
In the wider context of  Theorem \ref{unsym-rs}, we need the more general Mordell integral $G(u;\tau)$, and the constant terms $P_{n,k}\left(0,1/2\right)$ in \e{rty} are replaced with the  polynomials $P_{n,k}(x,\sigma)$ in $x$ and $\sigma$. The  fascinating properties of Mordell integrals have attracted many authors, as we have seen in Sections \ref{aup}, \ref{bup}.  The key symmetry of $G(u;\tau)$ as $\tau \to 1/\tau$ is related through Theorem \ref{unsym-rs} to the functional equation of $\zeta(s)$.  The polynomials $P_{n,k}(x,\sigma)$ inherit a functional equation from $\zeta(s)$, (Theorem \ref{polss}), and as noted above they also  seem to inherit interesting zeros.

In  future work we will examine these components $G(u;\tau)$ and $P_{n,k}(x,\sigma)$ in greater detail.
Also a natural  extension of the techniques in this paper is to Dirichlet $L$-functions $L(s,\chi)$. This would generalize the treatments in \cite{Si43} and \cite{De67}.

{\small
\bibliography{rs-bib}
}

{\small 
\vskip 5mm
\noindent
\textsc{Dept. of Math, The CUNY Graduate Center, 365 Fifth Avenue, New York, NY 10016-4309, U.S.A.}

\noindent
{\em E-mail address:} \texttt{cosullivan@gc.cuny.edu}
}

\end{document}